\documentclass[a4paper,11pt,leqno]{amsart}
\usepackage[big]{quot_style_v3}

\title{The Hilb-vs-Quot Conjecture}

\author{Oscar Kivinen}
\address{Aalto University, Department of Mathematics and Systems Analysis, P.O. Box 11100, FI-00076 Aalto, Finland}
\email{oscar.kivinen@aalto.fi}

\author[Minh-T\^{a}m Trinh]{Minh-T\^{a}m Quang Trinh}
\address{Department of Mathematics, Howard University, Washington, DC 20059}
\email{minhtam.trinh@howard.edu}

\begin{document}

\begin{abstract}
Let $R$ be the complete local ring of a complex plane curve germ and $S$ its normalization.
We propose a ``Hilb-vs-Quot'' conjecture relating the virtual weight polynomials of the Hilbert schemes of $R$ to those of the Quot schemes that parametrize $R$-submodules of $S$.
By relating the Quot side to a type of compactified Picard scheme, we show that our conjecture generalizes a conjecture of Cherednik's, and that it would relate the perverse filtration on the cohomology of the Picard side to a more elementary filtration.
Next, we propose a Quot version of the Oblomkov--Rasmussen--Shende conjecture, relating parabolic refinements of our Quot schemes to Khovanov--Rozansky link homology.
It becomes equivalent to the original version under (refined) Hilb-vs-Quot, but can also be strengthened to incorporate polynomial actions and $y$-ification.
For germs $y^n = x^d$, where $n$ is either coprime to or divides $d$, we prove the Quot version of ORS through combinatorics.
When $n = 3$ and $3 \nmid d$, we deduce Hilb-vs-Quot by an asymptotic argument, and hence, establish the original ORS conjecture for these germs.
\end{abstract}

\maketitle

\thispagestyle{empty}

\section{Introduction}

\subsection{}

Let $\RRR$ be the complete local ring of a complex algebraic plane curve germ: a reduced, complete, local $\bb{C}$-algebra of Krull dimension $1$, embedding dimension at most $2$, and residue field $\bb{C}$.
Let $\SSS \supseteq \RRR$ be its normalization.
For any finitely-generated $\RRR$-module $E$, let $\Quot^\ell(E)$ denote the Quot scheme whose $\bb{C}$-points parametrize submodules of $E$ of $\bb{C}$-codimension $\ell$.
It is a scheme of finite type.
When $E = \RRR$, it is the Hilbert scheme of $\ell$ points on $\Spec(\RRR)$.
We write
\begin{align}
	\cal{H}^\ell = \Quot^\ell(\RRR)
	\quad\text{and}\quad
	\cal{Q}^\ell = \Quot^\ell(\SSS).
\end{align}
For any $\bb{C}$-scheme of finite type $X$, let $\chi(X, \sf{t}) \in \bb{Z}[\sf{t}]$ denote the virtual weight polynomial of $X$ in the sense of mixed Hodge theory.
Let
\begin{align}
	\sf{Hilb}(\sf{q}, \sf{t}) 
	= \sum_{\ell \geq 0} \sf{q}^\ell \chi(\cal{H}^\ell, \sf{t})
	\quad\text{and}\quad
	\sf{Quot}(\sf{q}, \sf{t}) 
	&= \sum_{\ell \geq 0} \sf{q}^\ell \chi(\cal{Q}^\ell, \sf{t}).
\end{align}
We start by proposing the following conjecture.  

\begin{mainconj}[Hilb-vs-Quot]\label{conj:main}
	For any plane curve germ,
	\begin{align}
		\sf{Hilb}(\sf{q}, \sf{t}) = \sf{Quot}(\sf{q}, \sf{q}^{\frac{1}{2}} \sf{t}) .
	\end{align}
\end{mainconj}

Our first goal in this paper is to show that \Cref{conj:main} extends a conjecture of Cherednik's to plane curve germs with multiple branches.
Our second goal is to use a parabolic refinement of \Cref{conj:main} to clarify the Oblomkov--Rasmussen--Shende (ORS) Conjecture, which relates the Hilbert schemes of the germ to the Khovanov--Rozansky (KhR) homology of its link.
For germs of the form $y^n = x^d$, where $n$ is either coprime to or divides $d$, we will prove a Quot analogue of ORS.
For germs $y^3 = x^d$ with $d$ coprime to $3$, we will prove all of the conjectures above.

Separately, we will also describe a refinement of \Cref{conj:main} that incorporates known polynomial actions on the link homology and on its $y$-ification.

\subsection{}

We first review Cherednik's conjecture.
Let $K$ be the ring of fractions of $\SSS$.
The compactified Picard scheme of $\RRR$ is a reduced ind-scheme $\CptPic$ over $\bb{C}$ whose points parametrize finitely-generated $\RRR$-submodules $M \subseteq K$ such that $KM = K$.
Let $\gap$ be the \dfemph{gap function} on $\CptPic(\bb{C})$ given by
$\gap(M) = \sf{dim}_\bb{C}((\SSS M)/M).$
It takes values between $0 = \gap(\SSS)$ and the \dfemph{delta invariant} $\delta \vcentcolon= \gap(\RRR)$.

Let $\CptJac \subseteq \CptPic$ be the locus parametrizing those $M \subseteq K$ such that $M \cap \RRR$ has the same index in both $\RRR$ and $M$.
For any fixed integer $\gap$, let $\CptJac(\gap) \subseteq \CptJac$ be the constructible subvariety parametrizing $M$ such that $\gap(M) = \gap$.
In \cite[Conj.\ 4.5]{cherednik}, Cherednik essentially conjectured that when $\RRR$ is unibranch,
\begin{align}\label{eq:cherednik-weight}
	\sf{Hilb}(\sf{q}, \sf{t}) \stackrel{?}{=}
	\frac{1}{1 - \sf{q}}
	\sum_{0 \leq \gap \leq \delta}
	\sf{q}^\gap \chi(\CptJac(\gap), \sf{q}^{\frac{1}{2}}\sf{t}).
\end{align}
We will show that \Cref{conj:main} generalizes \eqref{eq:cherednik-weight} beyond the unibranch case.
For this, fix a uniformization $\SSS \simeq \prod_{i = 1}^b \bb{C}[\![\varpi_i]\!]$.
The scaling action of $K^\times$ on $\CptPic$ restricts to a free action of the lattice $\Lattice$ of elements $\vec{\varpi}^{\vec{x}} \vcentcolon= \varpi_1^{x_1} \cdots \varpi_b^{x_b}$ for $x_1, \ldots, x_b \in \bb{Z}$.
The quotient $\CptPic/\Lattice$ is a projective variety.
In the unibranch case where $b = 1$ and $\Lattice \simeq \bb{Z}$, we have $\CptPic/\Lattice \simeq \CptJac$.

The subvariety $\CptJac(\gap) \subseteq \CptJac$ is analogous to a $\Gamma$-stable subvariety $\CptPic(\gap) \subseteq \CptPic$.
When $b = 1$, the identity
\begin{align}\label{eq:hilb-vs-p}
	\sf{Hilb}(\sf{q}, \sf{t}) \stackrel{?}{=} 
	\frac{1}{(1 - \sf{q})^b} 
	\sum_\gap
	\sf{q}^\gap \chi(\CptPic(\gap)/\Lattice, \sf{q}^{\frac{1}{2}}\sf{t})
\end{align}
specializes to \eqref{eq:cherednik-weight}.
We will prove that
\begin{align}\label{eq:quot-vs-p}
	\sf{Quot}(\sf{q}, \sf{t}) \stackrel{?}{=} 
	\frac{1}{(1 - \sf{q})^b} 
	\sum_\gap
	\sf{q}^\gap \chi(\CptPic(\gap)/\Lattice, \sf{t}),
\end{align}
thereby proving that \Cref{conj:main} is equivalent to \eqref{eq:hilb-vs-p}.

\subsection{}

In fact, we will propose a conjecture stronger than  \Cref{conj:main}, and prove a statement stronger than \eqref{eq:quot-vs-p}.

We may assume that $\RRR = \bb{C}[\![x]\!][y]/(f)$, where $f(x, y) = 0$ defines a generically separable, degree-$n$ cover of the $x$-axis fully ramified at $(x, y) = (0, 0)$.
For $\nu$ an integer composition of $n$, there is a scheme of finite type $\cal{H}_\nu^\ell$, \emph{resp.}\@ $\cal{Q}_\nu^\ell$, whose $\bb{C}$-points are pairs $(M, F)$ in which $M$ is a point of $\cal{H}^\ell$, \emph{resp.}\@ $\cal{Q}_\ell$, and $F$ is a $y$-stable flag on $\bar{M} \vcentcolon= M/xM$ of parabolic type $\nu$.
Let $\sf{Hilb}_\nu$ and $\sf{Quot}_\nu$ be the analogues of $\sf{Hilb}$ and $\sf{Quot}$ for these schemes.
Then \Cref{conj:main} is refined by:

\begin{mainconj}[Parabolic Hilb-vs-Quot]\label{conj:main-flag}
	For any $\RRR$ and $\nu$ as above,
	\begin{align}
		\sf{Hilb}_\nu(\sf{q}, \sf{t}) = \sf{Quot}_\nu(\sf{q}, \sf{q}^{\frac{1}{2}} \sf{t}) .
	\end{align}
\end{mainconj}

Let $\CptPic_\nu, \CptPic_\nu(\gap)$ be defined analogously to $\cal{H}_\nu^\ell, \cal{Q}_\nu^\ell$.
There is an obvious refinement of \eqref{eq:quot-vs-p} using $\sf{Quot}_\nu$ and $\sf{Pic}_\nu$.
We can make a further motivic improvement.
Let $\Schfin{\bb{C}}$ be the category of $\bb{C}$-schemes of finite type, and for any object $X$ of $\Schfin{\bb{C}}$, let $[X]$ denote its class in the Grothendieck ring of $\Schfin{\bb{C}}$.
The virtual weight polynomial of $X$ is a specialization of $[X]$.
Let
\begin{align}
	\sf{Quot}_\nu^\mot(\sf{q})
	= \sum_\ell \sf{q}^\ell [\cal{Q}^\ell(\nu)]
	\quad\text{and}\quad
	\sf{Pic}_\nu^\mot(\sf{q})
	= \sum_\gap \sf{q}^\gap [\CptPic_\nu(\gap)/\Lattice].
\end{align}
In \Cref{sec:quot}, we prove:

\begin{mainthm}\label{thm:main}
	For any $\RRR$ and $\nu$ as above,
	\begin{align}
		\sf{Quot}_\nu^\mot(\sf{q})
		= \frac{1}{(1 - \sf{q})^b}\,\sf{Pic}_\nu^\mot(\sf{q}).
	\end{align}
\end{mainthm}

\noindent
The main idea is to embed $\coprod_\ell \cal{Q}^\ell$ into $\CptPic$, then relate $\ell$ to $\gap$ by way of a certain fundamental domain for the $\Lattice$-action.

Let $\Lambda_{\sf{q}, \sf{t}}^n$ be the vector space of degree-$n$ symmetric functions in infinitely many variables over $\bb{Q}(\sf{q}, \sf{t})$.
In \Cref{sec:springer}, we explain that \Cref{conj:main-flag} and the virtual weight specialization of \Cref{thm:main} can be rephrased in terms of elements $\cal{F}\sf{Hilb}$, $\cal{F}\sf{Quot}$, $\cal{F}\sf{Pic}$ of $\Lambda_{\sf{q}, \sf{t}}^n$, which recover the corresponding $\sf{q}, \sf{t}$-series involving $\nu$ via the Hall pairing with the homogeneous symmetric function $h_\nu$.
See \eqref{eq:conj-main-flag} and \eqref{eq:thm-main} for the precise formulas.

\subsection{}\label{subsec:ors}

Henceforth, we take $f(x, y) \in \bb{C}[x, y]$.
Fix a $3$-sphere in $\bb{C}^2$ around $(0, 0)$.
The intersection of the zero locus $\{f(x, y) = 0\}$ with this $3$-sphere is a topological link $L_f$, whose isotopy class depends only on $f$ when the sphere is small enough.
The number of branches $b$ is the number of connected components of $L_f$.

There is an isotopy invariant of links taking values in triply-graded vector spaces, known as HOMFLYPT or Khovanov--Rozansky (KhR) homology \cite{dgr, khr}.
In \cite{ors}, Oblomkov--Rasmussen--Shende conjectured an identity expressing the KhR homology of $L_f$ in terms of the Hilbert schemes $\cal{H}^\ell$.
The full statement requires nested versions $\cal{H}_{\nest{m}}^\ell \subseteq \cal{H}^\ell \times \cal{H}^{\ell + m}$, parametrizing pairs of ideals $(I, J)$ such that $xI + yI \subseteq J \subseteq I$.

For any link $L$, let $\bar{\cal{P}}_{L, \ORS}(a, q, t)$ be the graded dimension of the unreduced KhR homology of $L$ in the conventions of \cite{ors}, so that our $\bar{\cal{P}}_{L, \ORS}$ is their $\bar{\cal{P}}(L)$.
We will use a normalization $\bar{\sf{X}}_f(\sf{a}, \sf{q}, \sf{t}) \in \bb{Z}[\![\sf{q}]\!][\sf{a}^{\pm 1}, \sf{t}^{\pm 1}]$ satisfying
\begin{align}\label{eq:khr-normalization}
	\bar{\cal{P}}_{L_f, \ORS}(a, q, t)
	&=
	(aq^{-1})^{2\delta - b}\,
	\bar{\sf{X}}_f(a^2 t, q^2, q^2 t^2).
\end{align}
Then the ORS Conjecture \cite[Conj.\ 2]{ors} states that
\begin{align}\label{eq:ors-hilb}
	\bar{\sf{X}}_f(\sf{a}, \sf{q}, \sf{q}\sf{t}^2)
	= \sum_{\ell, m}
	\sf{q}^\ell
	\sf{a}^{2m}
	\sf{t}^{m(m - 1)}
	\chi(\cal{H}_{\nest{m}}^\ell, \sf{t}).
\end{align}
Note that this conjecture would imply that the virtual weight polynomials above contain only even powers of $\sf{t}$.

It was essentially observed in \cite{gors} that once we fix the presentation of $f(x, y) = 0$ as a degree-$n$ cover of the $x$-axis, the right-hand side of \eqref{eq:ors-hilb} can be written in terms of $\cal{F}\sf{Hilb}$.
Namely, let $\Psi(\sf{a}, -) : \Lambda_{\sf{q}, \sf{t}}^n \to \bb{Q}(\sf{q}, \sf{t})[\sf{a}]$ be the map 
\begin{align}\label{eq:hook}
	\Psi(\sf{a}, -)
	= (1 + \sf{a}) \sum_{0 \leq k \leq n - 1}
	{\sf{a}^k}
	\langle s_{(n - k, 1^k)}, -\rangle,
\end{align}
where $s_\mu \in \Lambda_{\sf{q}, \sf{t}}^n$ is the Schur function indexed by $\mu \vdash n$ and $\langle -, -\rangle$ is the Hall inner product.
In \Cref{sec:springer}, we explain that the right-hand side of \eqref{eq:ors-hilb} is $\Psi(\sf{a}, \cal{F}\sf{Hilb}(\sf{q}, \sf{t}))$.
So the ORS Conjecture is:
\begin{align}
	\bar{\sf{X}}_f(\sf{a}, \sf{q}, \sf{q}\sf{t}^2)
	= \Psi(\sf{a}, \cal{F}\sf{Hilb}(\sf{q}, \sf{t})).
\end{align}
In particular, if \Cref{conj:main-flag} (the Parabolic Hilb-vs-Quot Conjecture) holds, then \eqref{eq:ors-hilb} is equivalent to the following:

\begin{mainconj}[KhR-vs-Quot]\label{conj:ors-quot}
	For any $f$ as above,
	\begin{align}\label{eq:ors-quot}
		\bar{\sf{X}}_f(\sf{a}, \sf{q}, \sf{t}^2)
		= \Psi(\sf{a}, \cal{F}\sf{Quot}(\sf{q}, \sf{t})).
	\end{align}
\end{mainconj}

\begin{rem}
	When $L$ is the link closure of a braid $\beta$, the KhR homology of $L$ can be computed from the Rouquier complex of Soergel bimodules $\bar{\cal{T}}_\beta$, as we explain in \Cref{sec:conventions}.
	There is a richer invariant of $\bar{\cal{T}}_\beta$: its (dg) horizontal trace $\sf{Tr}_\ur{dg}(\bar{\cal{T}}_\beta)$.
	In \cite{ghw}, Gorsky--Hogancamp--Wedrich show that when $\beta$ has $n$ strands, $\sf{Tr}_\ur{dg}(\bar{\cal{T}}_\beta)$ decategorifies to an element of $\Lambda_{\sf{q}, \sf{t}}^n$, and the KhR homology of $L$ can be obtained by specializing $\sf{Tr}_\ur{dg}(\bar{\cal{T}}_\beta)$ along a version of $\Psi$.
	It is natural to expect that \Cref{conj:ors-quot} has a further refinement, taking $\beta$ to be the positive braid in $\{f(x, y) = 0\}$ that lifts a positive loop around $x = 0$, and comparing $\cal{F}\sf{Quot}$ directly to $\sf{Tr}_\ur{dg}(\bar{\cal{T}}_\beta)$.
	
	In \cite{trinh}, for any positive braid $\beta$ on $n$ strands, the second author introduced a (derived) scheme $\mathcal{Z}(\beta)$ with an action of $\GL_n$ and a Springer-type action of $S_n$ on its $\GL_n$-equivariant compactly-supported cohomology.
	The $S_n$-action on the associated graded of the weight filtration recovers an \emph{underived} horizontal trace.
	But there is no direct relationship between $[\mathcal{Z}(\beta)/{\GL_n}]$ and $\cal{Q}^\ell$.
\end{rem}

\subsection{}\label{subsec:intro-toric}

As we vary $\RRR$ in families, the Quot schemes $\cal{Q}^\ell$ do not deform as nicely as the Hilbert schemes $\cal{H}^\ell$, because in any versal deformation of $\RRR$, we can only deform $\SSS$ jointly with $\RRR$ in the stratum where $\delta$ is constant \cite{teissier}.
Nonetheless, \Cref{conj:ors-quot} is significantly more tractable than the original ORS Conjecture.

We will establish \Cref{conj:ors-quot} for two infinite families of plane curve germs with $\bb{C}^\times$-actions.
In what follows, we write $\cal{F}\sf{Quot}_{n, d}$, $\cal{F}\sf{Pic}_{n, d}$, $\bar{\sf{X}}_{n, d}$ in place of $\cal{F}\sf{Quot}$, $\cal{F}\sf{Pic}$, $\bar{\sf{X}}_f$ when $f(x, y) = y^n - x^d$ for some $n, d > 0$.

\begin{mainthm}\label{thm:ors-quot}
	Suppose that either of the following holds:
	\begin{enumerate}
		\item 	$d$ is coprime to $n$.
		\item 	$d$ is a multiple of $n$.
	\end{enumerate}
	Then \Cref{conj:ors-quot} holds for $f(x, y) = y^n - x^d$.
\end{mainthm}

We prove case (1) of \Cref{thm:ors-quot}, the coprime case, in \Cref{sec:coprime}.
We actually give two independent proofs:
\begin{enumerate}
	\item[(A)] 	The first extends the combinatorial commutative algebra in the proof of \cite[Cor.\@ A.5]{ors}, thereby relating $\Psi(\cal{F}\sf{Quot}_{n, d})$ to the formula for $\bar{\sf{X}}_{n, d}$ conjectured by Gorsky--Negu\c{t} in \cite{gn} and proved by Mellit in \cite{mellit_22}.
	
	\item[(B)] 	The second proof is more roundabout.
	We invoke \Cref{thm:main}, then relate $\Psi(\cal{F}\sf{Pic}_{n, d})$ to $\bar{\sf{X}}_{n, d}$ by work of Hikita \cite{hikita}, Mellit \cite{mellit_21}, Hogancamp--Mellit \cite{hm}, and Wilson \cite{wilson}.
	Our new contribution is to match the \dfemph{gap filtration} of $\CptPic/\Lattice$ induced by $\gap$ with Hikita's filtration on an isomorphic variety, up to a further involution.
	
\end{enumerate}
The Gorsky--Negu\c{t} formula in (A) implicitly involves certain semigroup modules and their generators, while the Hogancamp--Mellit recursion in (B) implicitly yields a formula for $\bar{\sf{X}}_{n, d}$ in terms of the ``cogenerators'' of these semigroup modules, by work of Gorsky--Mazin--Vazirani \cite{gmv}.
As we explain in \Cref{sec:coprime}, these formulas have the same lowest $\sf{a}$-degree, but differ in higher $\sf{a}$-degrees.

We prove case (2), where $d = nk$ for some integer $k$, in \Cref{sec:noncoprime}.
Here, the tools we need were developed in settings with extra structure: $y$-ified link homology on the KhR side, which we review in \Cref{sec:polynomial}, and torus-equivariant homology on the Quot side.
We relate $\bar{\sf{X}}_{n, nk}$ and $\Psi(\sf{a}, \cal{F}\sf{Quot}_{n, d}(\sf{q}, \sf{t}))$ to the same expression $\Psi(\sf{a}, \nabla^k p_{(1^n)})$, where $\nabla$ is Bergeron--Garsia's nabla operator and $p_{(1^n)}$ is a power-sum symmetric function, via work of Gorsky--Hogancamp \cite{GH17} and Carlsson--Mellit \cite{CM21}, respectively.

Both proof (B) of case (1) and the proof of case (2) involve comparisons to the \dfemph{affine Springer fibers} of \cite{kl}.
For the former, we match $\CptPic_{(1^n)}/\Lattice$ with the $\SL_n$ affine Springer fiber studied in \cite{hikita}.
For the latter, we match $\coprod_\ell \cal{Q}^\ell$ with the positive part of the $\GL_n$ affine Springer fiber studied in \cite{CM21}.
These steps are relegated to \Cref{sec:asf}.

\begin{rem}
	In \cite{turner}, generalizing the $\GL_3$ case of \cite{Kiv20}, Turner computes the Borel--Moore homologies of many unramified affine Springer fibers for $\GL_3$.
	This verifies the $(\sf{a}, \sf{q}) \to (0, 1)$ limit of \Cref{conj:ors-quot} for the associated plane curve germs, up to a certain localization.
\end{rem}

\subsection{}

Despite the claims in \cite[\S{1.2}]{hm} and \cite[\S{6.2}]{BLMSnotes}, we believe there is no proof of the original ORS Conjecture that covers either of the two cases in \Cref{thm:ors-quot}.
As we explain in \Cref{sec:coprime}, there does exist a combinatorial formula for $\cal{F}\sf{Hilb}_{n, d}$ when $n$ and $d$ are coprime, which was originally obtained in \cite[Cor.\@ A.5]{ors}, but it is much harder to match with $\bar{\sf{X}}_{n, d}$ than the analogous formula for $\cal{F}\sf{Quot}_{n, d}$.

Oblomkov--Rasmussen--Shende did verify their full conjecture when $f(x, y) = y^2 - x^d$ with $d$ odd.
As the map $\Psi$ loses no information for $n \leq 3$, this implies \Cref{conj:main-flag} for such $f$ via case (1) of \Cref{thm:ors-quot}.
Remarkably, we can use case (1) of \Cref{thm:ors-quot} to prove:

\begin{mainthm}\label{thm:n-equals-3}
	\Cref{conj:main-flag} holds for $f(x, y) = y^3 - x^d$ with $d$ coprime to $3$.
	Hence, the original ORS Conjecture \eqref{eq:ors-hilb} also holds for these cases.
\end{mainthm}

We give the proof at the end of \Cref{sec:coprime}.
First, we show that for $d$ coprime to $n$, the functions $\Psi(\sf{a}, \cal{F}\sf{Hilb}_{n, d}(\sf{q}, \sf{t}))$ and $\Psi(\sf{a}, \cal{F}\sf{Quot}_{n, d}(\sf{q}, \sf{q}^{\frac{1}{2}} \sf{t}))$ match in the limit where $d \to \infty$: 
See \Cref{prop:asymptotic}.
As the original functions agree with their limits up to order $d$ in $\sf{q}$, we can use symmetry properties on both sides to recover the finite identity from the asymptotic one.
On the $\cal{F}\sf{Quot}_{n, d}$ side, the necessary symmetry arises via \Cref{thm:ors-quot}(1) from KhR homology, where it was conjectured in \cite{dgr} and proved in \cite{or, ghm}.

At the end of \Cref{sec:noncoprime}, we verify the lowest $\sf{a}$-degree part of the original ORS Conjecture for the curves $y^2 = x^4$ and $y^3 = x^3$, using \cite{Kiv20}.
This proves \Cref{conj:main} for those curves, via case (2) of \Cref{thm:ors-quot}.

\begin{rem}
	In proving \Cref{thm:n-equals-3}, we also establish a closed formula for the KhR homology of $(3, d)$ torus knots that had been conjectured by Dunfield--Gukov--Rasmussen \cite[Conj.\ 6.3]{dgr}.
	See the discussion in \cite[\S{4}]{ors}.
\end{rem}

\subsection{}

Our proof of case (2) of \Cref{thm:ors-quot}, together with unpublished work of the first author, suggests a refinement of \Cref{conj:ors-quot}: one with no analogue in the Hilbert-scheme setting of \cite[Conj.\ 2]{ors}.

In general, if $L$ is a link with $b$ components, then $\bb{C}[\vec{x}] \vcentcolon= \bb{C}[x_1, \ldots, x_b]$ acts on the KhR homology of $L$ by \cite[Cor.\@ 5.4]{BLMSnotes}.
The $y$-ified KhR homology of $L$, introduced by Gorsky--Hogancamp in \cite{GH17}, is a monodromic deformation of its KhR homology that extends scalars from $\bb{C}$ to $\bb{C}[\vec{y}] \vcentcolon= \bb{C}[y_1, \ldots, y_b]$, thereby extending 
the $\bb{C}[\vec{x}]$-action to a $\bb{C}[\vec{x}, \vec{y}]$-action.

We conjecture that these actions match similar actions on the homology of the Quot schemes $\cal{Q}_\nu^\ell$.
Letting $\Lattice_{\geq 0} \subseteq \Lattice$ be the submonoid of $\vec{\varpi}^{\vec{x}}$ with $\vec{x} \in \bb{Z}_{\geq 0}^b$, we see that $\Lattice_{\geq 0}$ acts on $\coprod_\ell \cal{Q}_\nu^\ell$ and commutes with the torus $T(b) \vcentcolon= \bb{G}_m^b$ that rescales the $\varpi_i$.
Up to isomorphisms $\bb{C}[\vec{x}] \simeq \bb{C}[\Lattice_{\geq 0}]$ and $\bb{C}[\vec{y}] \simeq \ur{H}^\ast_{T(b)}(\point)$, we get a $\bb{C}[\vec{x}, \vec{y}]$-action on the sum of modified equivariant Borel--Moore homologies
\begin{align}\label{eq:xy}
	\bigoplus_\ell \gr_\ast^\sf{W} \ur{H}_\ast^{\BM,  T(b)}(\cal{Q}_\nu^\ell),
	\quad\text{where $\sf{W}_{\leq \ast}$ denotes weight filtrations}.
\end{align}
The $x_i$ shift $\ell$ by $1$ and preserve weights, and the $y_i$ preserve $\ell$ and shift weights by $-2$.
As $\nu$ varies, these $\bb{C}[\vec{x}, \vec{y}]$-modules can be packaged together into a bigraded $(\bb{C}[\vec{x}, \vec{y}] \times \bb{C}S_n)$-module $\smash{\widetilde{\sf{Quot}}}^{\vec{x}, \vec{y}}$.
The map $\Psi$ categorifies to a functor from such bigraded modules to triply-graded $\bb{C}[\vec{x}, \vec{y}]$-modules.
Abusing notation, we again write $\Psi$ to denote this functor.
We can now state the following refinement of \Cref{conj:ors-quot}, with more explicit details left to \Cref{sec:polynomial}.

\begin{mainconj}[$y$-ified KhR-vs-Quot]\label{conj:ors-quot-enhanced}
	The $y$-ified KhR homology of $L_f$ is isomorphic as a triply-graded $\bb{C}[\vec{x}, \vec{y}]$-module to $\Psi(\smash{\widetilde{\sf{Quot}}}^{\vec{x}, \vec{y}})$, after appropriate regrading.
\end{mainconj}

\subsection{Acknowledgments}

We are grateful to Francesca Carocci, Eugene Gorsky, Andy Wilson, and Zhiwei Yun for helpful discussions about \cite{teissier}, \cite{gmv}, \cite{wilson}, and \cite{cherednik}, respectively, and to Nathan Williams for informing us about rowmotion.
We also thank the referee for reading our paper carefully and offering many good suggestions.
During this work, the second author was supported by an NSF Mathematical Sciences Research Fellowship, Award DMS-2002238.

\section{Quot and Picard Schemes}\label{sec:quot}

\subsection{}

The main goal of this section is to prove \Cref{thm:main}.
We keep the definitions of $\RRR$, $\SSS$, $K$, $b$ from the introduction.

\subsection{}

First, we review the formal definition of the \dfemph{compactified Picard scheme} \cite[\S{3.10}]{my}.
Let $\fr{m}_\RRR$ be the maximal ideal of $\RRR$, and for any $\RRR$-module $E$, let $(-) \hatotimes E$ be the tensor product with $E$ completed in the $\fr{m}_\RRR$-adic topology on $E$.
Let $\CptPic_\unred$ be the functor from $\bb{C}$-algebras to sets defined by
\begin{align}
	\CptPic_\unred(A)
	&= \left\{
	\begin{array}{l}
		\text{$(A \hatotimes \RRR)$-submodules}\\
		M \subseteq A \hatotimes K
	\end{array}
	\middle|
	\begin{array}{l}
		\text{$\exists\, i$ s.t.\ $A \hatotimes \fr{m}_\RRR^i \subseteq M \subseteq A \hatotimes \fr{m}_\RRR^{-i}$}\\
		\text{and $(A \hatotimes \fr{m}_\RRR^{-i})/M$ is locally free}\\
		\text{over $A$ of finite rank}
	\end{array}
	\right\}
\end{align}
for any $\bb{C}$-algebra $A$.
An argument in \cite[\S{2}]{gortz} shows that $\CptPic_\unred$ is representable by an ind-scheme.
Let $\CptPic \subseteq \CptPic_\unred$ be the underlying reduced ind-scheme.
Taking $A = \bb{C}$ recovers
\begin{align}
	\CptPic(\bb{C}) = \CptPic_\unred(\bb{C})
	&= \{
	\text{finitely-generated $R$-submodules $M \subseteq K$}
	\mid
	KM = K
	\},
\end{align}
as in the introduction.

\begin{rem}\label{rem:reduced}
	Even though $\CptPic_\unred, \CptPic$ have the same $\bb{C}$-points, it is only $\CptPic$ that forms a scheme locally of finite type.
	For instance, if $\RRR = \bb{C}[\![x]\!]$, then $\CptPic_\unred \simeq x^\bb{Z} \times \CptPic_\unred^\mathit{nil}$, where $\CptPic_\unred^\mathit{nil}(A)$ parametrizes Laurent tails in $x$ where each coefficient is a nilpotent element of $A$; by contrast, $\CptPic \simeq x^\bb{Z}$.
\end{rem}

\subsection{}

For any integer $\gap$, let $\CptPic(\gap) \subseteq \CptPic$ be the sub-ind-scheme defined by
\begin{align}
	\CptPic(\gap)(A) = \{M \in \CptPic(A) \mid \text{$(\SSS M)/M$ is locally free over $A$ of rank $\gap$}\}.
\end{align}

\begin{prop}
	If $M \in \CptPic_\unred(A)$, then $(\SSS M)/M$ is locally free over $A$ of rank at most $\delta \vcentcolon= \sf{dim}_\bb{C}(\SSS/\RRR)$.
\end{prop}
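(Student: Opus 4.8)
The plan is to reduce the claim to a computation over $\bb{C}$-points in a flat family, exploiting the local freeness already built into the definition of $\CptPic_\unred(A)$. First I would observe that the condition $A \hatotimes \fr{m}_\RRR^i \subseteq M \subseteq A \hatotimes \fr{m}_\RRR^{-i}$ gives an inclusion $(\SSS M)/M \hookrightarrow (\SSS \hatotimes_\bb{C} A \cdot \fr{m}_\RRR^{-i})/(A \hatotimes \fr{m}_\RRR^i)$, whose target is a finitely-generated $A$-module; combined with the hypothesis that $(A\hatotimes\fr{m}_\RRR^{-i})/M$ is locally free of finite rank over $A$, one deduces that $(\SSS M)/M$ is a coherent $A$-module. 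To see it is locally free, I would work Zariski-locally on $\Spec A$ and show the fiber dimension $\sf{dim}_{\kappa(\fr{p})}\big((\SSS M)/M \otimes_A \kappa(\fr{p})\big)$ is constant, then invoke the standard criterion (a coherent sheaf with locally constant fiber dimension over a reduced base is locally free). Here reducedness is not a problem since we may reduce to the reduced ind-scheme $\CptPic$, but in fact one can argue directly: the two short exact sequences $0 \to M \to A\hatotimes\fr{m}_\RRR^{-i} \to (A\hatotimes\fr{m}_\RRR^{-i})/M \to 0$ and $0 \to M \to \SSS M \to (\SSS M)/M \to 0$, together with $0 \to \SSS M \to \SSS\hatotimes\fr{m}_\RRR^{-i} \to (\SSS\hatotimes\fr{m}_\RRR^{-i})/\SSS M \to 0$, let one express $(\SSS M)/M$ as the cokernel of a map between $A$-flat modules, and a diagram chase with $\mathrm{Tor}$ shows $(\SSS M)/M$ is $A$-flat, hence (being finitely presented) locally free.

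The bound on the rank is then checked fiberwise. Over each point $\fr{p} \in \Spec A$ with residue field $\kappa = \kappa(\fr{p})$, the module $M \otimes_A \kappa$ is a finitely-generated $(\kappa\hatotimes\RRR)$-submodule $M_\kappa \subseteq \kappa\hatotimes K$ with $(\kappa\hatotimes K)M_\kappa = \kappa\hatotimes K$, i.e.\ a $\kappa$-point of $\CptPic$ for the base-changed germ (or, when $\kappa = \bb{C}$, an honest point as in the introduction). The rank of the locally free sheaf $(\SSS M)/M$ at $\fr{p}$ equals $\sf{dim}_\kappa\big((\SSS\hatotimes_\RRR M_\kappa)/M_\kappa\big)$, which is exactly the gap function $\gap(M_\kappa)$. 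So it suffices to prove $\gap(M_\kappa) \le \delta$ for a single such module $M_\kappa$ over a field. This is the essentially classical statement that the gap function on the compactified Jacobian/Picard takes values in $[0,\delta]$; I would prove it by choosing $j$ large enough that $\fr{m}_\RRR^j \subseteq M_\kappa$ (possible since $M_\kappa$ is finitely generated with $KM_\kappa = K$, hence contains a fractional ideal of the form $\varpi^N\SSS$ after scaling, and $\SSS$ itself contains $\fr{m}_\RRR^j$), and then comparing lengths: $(\SSS M_\kappa)/M_\kappa$ injects into $(\SSS M_\kappa)/(\fr{m}_\RRR^j M_\kappa)$ — no wait, cleaner is to note $\gap(M_\kappa) = \sf{dim}_\kappa(\SSS M_\kappa/M_\kappa)$ and that multiplication gives a surjection $\SSS/\RRR \twoheadrightarrow \SSS M_\kappa / M_\kappa$ whenever $\RRR M_\kappa = M_\kappa$, which always holds since $M_\kappa$ is an $\RRR$-module; since $\SSS$ is the normalization and $M_\kappa$ is a fractional $\RRR$-ideal with $\SSS M_\kappa$ its saturation, the natural map $\SSS \otimes_\RRR M_\kappa \to \SSS M_\kappa$ induces $\SSS/\RRR \otimes_\RRR M_\kappa \twoheadrightarrow \SSS M_\kappa/M_\kappa$, and $\SSS/\RRR$ is killed by the conductor, so $\SSS/\RRR \otimes_\RRR M_\kappa$ is a quotient of $(\SSS/\RRR) \otimes_{\RRR/\fr{c}} (M_\kappa/\fr{c}M_\kappa)$; taking $M_\kappa$ such that $\SSS \supseteq M_\kappa \supseteq \fr{c}$ after scaling one reads off $\sf{dim}_\kappa(\SSS M_\kappa / M_\kappa) \le \sf{dim}_\kappa(\SSS/M_\kappa) \le \sf{dim}_\kappa(\SSS/\RRR) = \delta$.

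Let me package this more carefully in the writeup: after scaling $M_\kappa$ by a unit of $K$ — which changes neither $\gap(M_\kappa)$ nor membership in $\CptPic$ — we may assume $\fr{c} \subseteq M_\kappa \subseteq \SSS$, where $\fr{c}$ is the conductor (the largest $\SSS$-ideal contained in $\RRR$), because any finitely-generated $\RRR$-submodule of $K$ with $KM_\kappa = K$ contains and is contained in fractional ideals squeezing it between $\varpi^a \SSS$ and $\varpi^{-a}\SSS$, and $\fr{c} \subseteq \RRR \subseteq \SSS$ bounds $\SSS/\RRR$. Then $\SSS M_\kappa \subseteq \SSS \SSS = \SSS$, so $(\SSS M_\kappa)/M_\kappa$ is a subquotient of $\SSS/\fr{c}$, giving $\gap(M_\kappa) \le \sf{dim}_\kappa(\SSS/\fr{c})$; and one uses $\RRR \supseteq \fr{c}$ to instead land in $\SSS/M_\kappa \subseteq \SSS/\fr{c}$... the sharp bound by $\delta$ rather than $\sf{dim}(\SSS/\fr{c})$ comes from the surjection $\SSS/\RRR \twoheadrightarrow (\SSS M_\kappa)/M_\kappa$, $s + \RRR \mapsto sm + M_\kappa$ where this is well-defined because $\RRR M_\kappa \subseteq M_\kappa$ — wait, one needs this for all of $\SSS M_\kappa$, i.e.\ surjectivity, which holds since $\SSS M_\kappa$ is $\RRR$-generated by products $sm$. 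The main obstacle I anticipate is not any single step but the bookkeeping of passing between the ind-scheme/functor-of-points formulation and the fiberwise picture: making precise that "locally free of finite rank over $A$" for the various quotients is stable under the base changes involved, and that the rank function is Zariski-locally constant so that the classical field-case bound propagates. Once that reduction is clean, the inequality $\gap \le \delta$ over a field is short, via the conductor and the surjection from $\SSS/\RRR$.
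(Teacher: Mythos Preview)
Your approach---prove coherence and $A$-flatness of $(\SSS M)/M$, then bound the rank fiberwise---differs from the paper's, which works entirely over $A$ by exhibiting $(\SSS M)/M$ as an $A$-module direct summand of $(\SSS \otimes_\RRR M)/M$: it writes down an explicit $A$-linear section of the multiplication surjection and argues that $(\SSS \otimes_\RRR M)/M$ is $A$-free of rank~$\delta$.

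Your proposal has two gaps. The flatness step is not valid as written: cokernels of maps between flat modules need not be flat, and your three short exact sequences do not by themselves force $\mathrm{Tor}_1^A((\SSS M)/M,-)=0$. What you would actually need is that $\SSS M \in \CptPic_\unred(A)$ as well, so that $(\SSS M)/M$ sits as the \emph{kernel} of a surjection of finite locally free $A$-modules; but that membership requires its own argument. More seriously, the fiberwise bound does not close. For a fixed $m \in M_\kappa$, the map $s + \RRR \mapsto sm + M_\kappa$ has image only $(\SSS m + M_\kappa)/M_\kappa$, and your surjectivity claim conflates a single $m$ with varying $m$; the conductor route you sketch yields only $\gap \le \dim_\kappa(\SSS/\fr{c}) = 2\delta$, not $\delta$. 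The missing idea is to first scale so that $\SSS M_\kappa = \SSS$ (possible since fractional $\SSS$-ideals are principal) and then note that $M_\kappa$ must contain some $m \in \SSS^\times$: since $M_\kappa$ lies in no maximal ideal of the semilocal ring $\SSS$, a generic $\kappa$-linear combination of finitely many of its elements is a unit. With such an $m$ one has $\RRR m \subseteq M_\kappa \subseteq \SSS = \SSS m$, so $(\SSS M_\kappa)/M_\kappa = \SSS/M_\kappa$ is a quotient of $\SSS/\RRR m \simeq \SSS/\RRR$, giving $\gap(M_\kappa) \le \delta$.
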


\begin{proof}
	Observe that $(\SSS \otimes_\RRR M)/M$ is free over $A$ of rank $\delta$ because 
	\begin{align}
		(\SSS \otimes_\RRR M)/M \simeq ((A \hatotimes \SSS) \otimes_{A \hatotimes \RRR} M)/M \simeq (A \hatotimes \SSS)/(A \hatotimes \RRR).
	\end{align}
	Hence it suffices to show that $(\SSS M)/M$ is a direct summand of $(\SSS \otimes_\RRR M)/M$ as an $A$-module.
	
	Let $s_1, \ldots, s_\delta$ be a non-redundant (full) set of coset representatives for $\RRR$ in $\SSS$.
	Then $\SSS M = \sum_j {(s_j + R)M} = \sum_j s_j M$, so we can pick some subset $J \subseteq \{1, \ldots, \delta\}$, and $m_j \in M$ for $j \in J$, such that $\{s_j m_j\}_{j \in J}$ is a non-redundant set of coset representatives for $M$ in $\SSS M$.
	The $A$-linear map $(\SSS M)/M \to (\SSS \otimes_\RRR M)/M$ that sends $s_jm_j + M \mapsto s_j \otimes m_j + 1 \otimes M$ is an $A$-linear section of the natural surjective map $(\SSS \otimes_\RRR M)/M \to (\SSS M)/M$, as desired.
\end{proof}

\begin{cor}\label{cor:gap}
	$\CptPic$ is the union of the locally closed sub-ind-schemes $\CptPic(\gap)$ for $0 \leq \gap \leq \delta$. In fact, the locally closed subsets $\CptPic(\gap)$ form a stratification of $\CptPic$.
\end{cor}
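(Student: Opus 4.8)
The plan is to deduce the statement entirely from the upper semicontinuity of the gap function on $\CptPic$. First I would use the Proposition to pin down the range of $\gap$: it says that $(\SSS M)/M$ has $\bb{C}$-dimension at most $\delta$ for every $M \in \CptPic(\bb{C})$, so $\gap$ takes values in $\{0, 1, \dots, \delta\}$, and since the subfunctors $\CptPic(\gap)$ are pairwise disjoint and jointly exhaust the $\bb{C}$-points of $\CptPic$, this already yields the set-theoretic decomposition $\CptPic = \bigcup_{0 \le \gap \le \delta}\CptPic(\gap)$. Both remaining assertions --- that each $\CptPic(\gap)$ is locally closed, and that the pieces form a stratification --- would follow from the single claim that $\CptPic(\ge k) \vcentcolon= \bigcup_{\gap \ge k}\CptPic(\gap)$ is a closed sub-ind-scheme for every $k$. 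Indeed, granting this, one has a descending chain of closed sub-ind-schemes $\CptPic = \CptPic(\ge 0) \supseteq \CptPic(\ge 1) \supseteq \dots \supseteq \CptPic(\ge \delta) \supseteq \CptPic(\ge \delta + 1) = \emptyset$, so that $\CptPic(\gap) = \CptPic(\ge \gap) \setminus \CptPic(\ge \gap + 1)$ is locally closed, and $\overline{\CptPic(\gap)} \subseteq \CptPic(\ge \gap) = \bigsqcup_{\gap' \ge \gap}\CptPic(\gap')$, which is the frontier condition, whence the $\CptPic(\gap)$ form a stratification.

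To prove that $\CptPic(\ge k)$ is closed, I would argue on a cofinal system of finite-type closed subschemes $\mathcal{P} \subseteq \CptPic$, which is enough because $\CptPic$ is an ind-scheme. Over such a $\mathcal{P}$ there is a universal submodule $\mathcal{M} \subseteq \mathcal{O}_\mathcal{P}\hatotimes K$, and after enlarging the bounding index $i$ so that $\mathcal{O}_\mathcal{P}\hatotimes\fr{m}_\RRR^i \subseteq \mathcal{M}$, I would consider the coherent sheaf $\mathcal{Q} \vcentcolon= (\SSS\mathcal{M})/\mathcal{M}$ on $\mathcal{P}$ --- coherent because it is a quotient of the rank-$\delta$ locally free sheaf $(\SSS \otimes_\RRR \mathcal{M})/\mathcal{M}$ exhibited in the proof of the Proposition. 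The crucial point is that the formation of $\mathcal{Q}$ commutes with base change, so that $\mathcal{Q} \otimes \kappa(x) = (\SSS M_x)/M_x$ and therefore $\gap(M_x) = \dim_{\kappa(x)} \mathcal{Q} \otimes \kappa(x)$ at every point $x$ of $\mathcal{P}$. Upper semicontinuity of the fibre dimension of a coherent sheaf then shows that $\{\gap \ge k\} \cap \mathcal{P}$ is closed in $\mathcal{P}$; letting $\mathcal{P}$ run over the cofinal system shows $\CptPic(\ge k)$ is a closed sub-ind-scheme.

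The hard part will be justifying the base-change identity $\mathcal{Q} \otimes \kappa(x) = (\SSS M_x)/M_x$, which is false for a generic quotient of two coherent sheaves. Making it work rests on flatness of the universal submodule $\mathcal{M}$ over $\mathcal{P}$ --- which holds because $(\mathcal{O}_\mathcal{P} \hatotimes \fr{m}_\RRR^{-i})/\mathcal{M}$ is locally free over $\mathcal{P}$ by the very definition of $\CptPic_\unred$, so that $\mathcal{M}$ lies inside the flat sheaf $\mathcal{O}_\mathcal{P}\hatotimes\fr{m}_\RRR^{-i}$ with locally free cokernel --- together with right-exactness of $(-)\otimes\kappa(x)$, which reduces everything to checking that the image subsheaf $\SSS\mathcal{M} \subseteq \mathcal{O}_\mathcal{P}\hatotimes K$ also specializes correctly; along the way one should also verify that $\SSS\mathcal{M}/(\mathcal{O}_\mathcal{P}\hatotimes\fr{m}_\RRR^i)$ is genuinely coherent and that this truncation is insensitive to the choice of $i$. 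Once this bookkeeping is dispatched, the rest of the argument is formal.
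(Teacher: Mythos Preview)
Your proposal is correct and takes essentially the same approach as the paper: the paper's own proof is the one-liner ``This follows from the upper semicontinuity of rank,'' and you have simply unpacked what lies behind that sentence (the coherence of $(\SSS\mathcal{M})/\mathcal{M}$, its compatibility with base change, and the resulting chain of closed sub-ind-schemes). The extra care you take with the base-change identity is reasonable bookkeeping, but not a different idea.
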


\begin{proof}
	It remains to explain why the $\CptPic(\gap)$ are locally closed:
	This follows from the upper semicontinuity of rank.
\end{proof}

\subsection{}

Recall that we fix once and for all a uniformization $\SSS \xrightarrow{\sim} \prod_{i = 1}^b \bb{C}[\![\varpi_i]\!]$, and set $\Lattice = \{\varpi^{\vec{x}} \mid \vec{x} \in \bb{Z}^b\}$, where $\varpi^{\vec{x}} = \varpi_1^{x_1} \cdots \varpi_b^{x_b}$.
The group $\Lattice$ acts on $\CptPic$ by scaling.
Adapting the proof of \cite[Cor.\@ 1]{kl}, one can check that $\CptPic/\Lattice$ is a projective variety.
For all $\gap$, we see that $\CptPic(\gap)$ is $\Gamma$-stable, which lets us form the locally-closed subvariety $\CptPic(\gap)/\Lattice \subseteq \CptPic/\Lattice$.

For any finitely-generated $\RRR$-submodule $E \subseteq K$, let $\Quot^\ell(E)$ be the Quot scheme parametrizing submodules of $E$ of codimension $\ell$.
In more detail, let $\Quot^\ell_\unred(E)$ be the functor
\begin{align}
	\Quot^\ell_\unred(E)(A)
	&= \left\{
	\begin{array}{l}
		\text{$(A \hatotimes \RRR)$-submodules}\\
		M \subseteq A \hatotimes E
	\end{array}
	\middle|
	\begin{array}{l}
		\text{$\exists\, i$ s.t.\ $A \hatotimes \fr{m}_\RRR^i \subseteq M \subseteq A \hatotimes \fr{m}_\RRR^{-i}$}\\
		\text{and $(A \hatotimes \fr{m}_\RRR^{-i})/M$ is locally free}\\
		\text{over $A$ of rank $\ell$}
	\end{array}
	\right\}.
\end{align}
It is again representable by an ind--scheme.
We take $\Quot^\ell(E) \subseteq \Quot^\ell_\unred(E)$ to be the underlying reduced ind-scheme, which again gives a scheme locally of finite type. 
Note that these are not Quot schemes in the usual sense, but punctual versions.
There is a tautological inclusion $\Quot^\ell(E) \to \CptPic$.

As in the introduction, we set $\cal{Q}^\ell = \Quot^\ell(\SSS)$ and $\Lattice_{\geq 0} = \{\varpi^{\vec{x}} \mid \vec{x} \in \bb{Z}_{\geq 0}^b\}$.
The free action of $\Lattice$ on $\CptPic$ by scaling restricts to a free action of $\Lattice_{\geq 0}$ on $\coprod_\ell \cal{Q}^\ell$.
Moreover, for all $\ell$, we have
\begin{align}
	\label{eq:shift}
	(M, \vec{x}) \in \cal{Q}^\ell \times \bb{Z}_{\geq 0}^b
	&\implies \varpi^{\vec{x}} M \in \Quot^{\ell + \ur{sum}(\vec{x})}(\SSS),
\end{align}
where $\ur{sum}(\vec{x}) = e_1 + \cdots + e_b$.

\begin{lem}\label{lem:domain}
	The subscheme $\cal{D} \subseteq \coprod_\ell \cal{Q}^\ell$ defined by
	\begin{align}
		\cal{D}(A) = \{M \subseteq A \hatotimes \SSS \mid M \cap (A \hatotimes \SSS)^\times \neq \emptyset\}.
	\end{align}
	is a fundamental domain for both the $\Lattice$-action on $\CptPic$ and the $\Lattice_{\geq 0}$-action on $\coprod_\ell \cal{Q}^\ell$.
\end{lem}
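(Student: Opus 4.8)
The plan is to show that the natural map $\coprod_\ell \cal{Q}^\ell \to \CptPic/\Lattice$ restricts to an isomorphism $\cal{D} \xrightarrow{\sim} \CptPic/\Lattice$, and that the map $\coprod_\ell \cal{Q}^\ell \to (\coprod_\ell \cal{Q}^\ell)/\Lattice_{\geq 0}$ restricts to an isomorphism $\cal{D} \xrightarrow{\sim} (\coprod_\ell \cal{Q}^\ell)/\Lattice_{\geq 0}$. Both statements amount to the same combinatorial fact about submodules of $\SSS$, so I would phrase the argument uniformly. First I would check that $\cal{D}$ is a well-defined open sub-ind-scheme: the condition $M \cap (A \hatotimes \SSS)^\times \neq \emptyset$, i.e.\ $M$ contains a unit of $A \hatotimes \SSS$, is equivalent to $\SSS M = A \hatotimes \SSS$ (one inclusion is clear; conversely, if $\SSS M$ is the whole ring then $1 \in \SSS M$, and after possibly passing to a finite cover by distinguished opens of $\Spec A$ one can exhibit a unit inside $M$ itself since $M$ is a direct summand as an $A$-module). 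This is an open condition by upper semicontinuity, so $\cal{D}$ is an open subscheme of $\coprod_\ell \cal{Q}^\ell$, hence itself locally of finite type.

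Next I would verify the two "fundamental domain" properties for $\bb{C}$-points, then promote them to the scheme level. For the $\Lattice_{\geq 0}$-action on $\coprod_\ell \cal{Q}^\ell$: given $M \subseteq \SSS$ finitely generated with $\SSS M \ne \SSS$ a priori, write $\SSS M = \vec{\varpi}^{\vec{x}} \SSS$ for a unique $\vec{x} \in \bb{Z}_{\geq 0}^b$ (using that every $\SSS$-submodule of $\SSS$ containing a non-zero-divisor is of this form, since $\SSS = \prod_i \bb{C}[\![\varpi_i]\!]$ is a product of DVRs). Then $\vec{\varpi}^{-\vec{x}} M \subseteq \SSS$ satisfies $\SSS \cdot (\vec{\varpi}^{-\vec{x}} M) = \SSS$, so $\vec{\varpi}^{-\vec{x}} M \in \cal{D}$; and this $\vec{x}$ is forced, because two elements of $\cal{D}$ in the same $\Lattice_{\geq 0}$-orbit must be equal (if $\vec{\varpi}^{\vec{y}} M = M'$ with $M, M' \in \cal{D}$, applying $\SSS \cdot (-)$ gives $\vec{\varpi}^{\vec{y}} \SSS = \SSS$, so $\vec{y} = \vec{0}$). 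This shows every $\Lattice_{\geq 0}$-orbit meets $\cal{D}$ in exactly one point. For the $\Lattice$-action on $\CptPic$: given $M \subseteq K$ with $KM = K$, the same reasoning gives a unique $\vec{x} \in \bb{Z}^b$ (now allowed to be negative) with $\SSS M = \vec{\varpi}^{\vec{x}} \SSS$, and $\vec{\varpi}^{-\vec{x}} M \subseteq \SSS$ lies in $\cal{D}$; uniqueness within an orbit is identical. Note the key point that any $M \in \cal{D}$ automatically satisfies $KM = K$, so $\cal{D} \subseteq \CptPic$ genuinely, and the combinatorial bookkeeping (the decomposition $\vec{x} \in \bb{Z}^b = \coprod$ over $\bb{Z}_{\geq 0}^b$-cosets) is what connects the two statements.

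To upgrade from $\bb{C}$-points to an isomorphism of schemes, I would exhibit the inverse morphism explicitly: on $\cal{D}$, the assignment $M \mapsto \SSS M$ is (after the analysis above) constant with value $A \hatotimes \SSS$, which makes the $\Lattice$-action (resp.\ $\Lattice_{\geq 0}$-action) map from $\cal{D}$ to the quotient an open immersion that is bijective on points, and since $\CptPic/\Lattice$ is reduced (being a quotient of the reduced $\CptPic$) and $\cal{D}$ is reduced, a bijective open immersion of reduced schemes locally of finite type is an isomorphism; the same argument handles $(\coprod_\ell \cal{Q}^\ell)/\Lattice_{\geq 0}$. The main obstacle I anticipate is the scheme-theoretic (functor-of-points) verification that $\cal{D}$ genuinely surjects onto each quotient in families — i.e.\ that for an arbitrary $\bb{C}$-algebra $A$ and $M \in \CptPic(A)$, there is a Zariski cover of $\Spec A$ over which $M$ is $\Lattice$-translated into $\cal{D}$, with the translation locally constant. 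This requires knowing that $\SSS M$ is again locally free over $A$ with the "shape" $\vec{\varpi}^{\vec{x}} (A \hatotimes \SSS)$ locally on $\Spec A$, which follows from $A \hatotimes \SSS = \prod_i A[\![\varpi_i]\!]$ together with upper semicontinuity of the $\varpi_i$-valuations; I would make this precise by reducing to the one-branch case $\SSS = \bb{C}[\![\varpi]\!]$ and the standard structure theory of finitely generated torsion-free modules over $A[\![\varpi]\!]$ that contain a non-zero-divisor, then take the product over $i$.
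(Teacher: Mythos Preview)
Your proposal is correct. The core idea---determining the translation $\vec{x}$ via $\SSS M = \varpi^{\vec{x}} \SSS$, using that $\SSS$ is a product of DVRs---is essentially equivalent to the paper's approach, which instead picks a single element $m \in M \cap (A \hatotimes K)^\times$ (obtained from $KM = A \hatotimes K$) and factors it as $m = \varpi^{\vec{x}} m'$ with $m' \in (A \hatotimes \SSS)^\times$. Your framing via $\SSS M$ is slightly more invariant and in fact anticipates the characterization $M \in \cal{D} \iff \SSS M = A \hatotimes \SSS$ that the paper records separately as the very next lemma. The paper works directly with $A$-points for arbitrary $A$ rather than doing $\bb{C}$-points first and then upgrading; the scheme-theoretic issue you flag (local constancy of $\vec{x}$ in families) is exactly where the paper invokes reducedness of $\CptPic$ and reduces to the one-branch case, so both arguments confront the same subtlety in the same way.
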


\begin{proof}
	If $u = u(\varpi_1, \ldots, \varpi_b)$ belongs to $(A \hatotimes \SSS)^\times$, then the constant term of $u$ must belong to $A^\times$.
	We deduce that if $M \in \cal{D}(A)$, then $\varpi^{\vec{x}} M  \in \cal{D}(A)$ occurs only when $\vec{x}$ is the zero vector.
	Therefore $\cal{D}$ is irredundant under the action of $\Lattice$ on $\CptPic$.
	
	It remains to show that every element $M \in \CptPic(A)$, \emph{resp.}\ $M \in \coprod_\ell \cal{Q}^\ell(A)$, takes the form $\varpi^{\vec{x}} M'$ for some $M' \in \cal{D}(A)$ and $\vec{x} \in \bb{Z}^b$, \emph{resp.}\ $\vec{x} \in \bb{Z}_{\geq 0}^b$.
	Observe that $KM = A \hatotimes K$, because once we pick $i \geq 0$ such that $M \supseteq A \hatotimes \fr{m}_R^i$, we obtain $KM \supseteq A \hatotimes K\fr{m}_R^i = A \hatotimes K$.
	Therefore, $KM \ni 1$, which means we can find some $u \in (A \hatotimes K)^\times$ and $m \in M$ such that $um = 1$.
	This in turn means $m = u^{-1} \in M \cap (A \hatotimes K)^\times = M \cap \prod_{i = 1}^b A(\!(\varpi_i)\!)^\times$.
	
	In the case of $\coprod_\ell \cal{Q}^\ell$, we conclude as follows:
	Since $m \in A \hatotimes \SSS = \prod_{i = 1}^b A[\![\varpi_i]\!]$ as well, we get $m = \varpi^{\vec{x}}m'$ for some $\vec{x} \in \bb{Z}_{\geq 0}^b$ and $m' \in (A \hatotimes \SSS)^\times$ by factoring out the largest powers of the uniformizers $\varpi_i$ from $m$.
	
	In the case of $\CptPic$, we conclude as follows:
	Write $m = (m_i)_{i = 1}^b$ with $m_i \in A(\!(\varpi_i)\!)^\times$.
	The fact that $\CptPic$ is the underlying reduced ind-scheme of $\CptPic_\unred$ means that we can assume, by reduction to the $b = 1$ case in \Cref{rem:reduced}, that for all $i$, the coefficient of the lowest-degree term of $m_i$ is a unit, not a nilpotent element, of $A$.
	Now we get $m = \varpi^{\vec{x}} m'$ for some $\vec{x} \in \bb{Z}^b$ and $m' \in (A \hatotimes S)^\times$ by factoring, as before.
\end{proof}

\begin{lem}\label{lem:gap-vs-len}
	For any $\bb{C}$-algebra $A$ and $M \in \coprod_\ell \cal{Q}^\ell(A)$, we have
	\begin{align}
		M \in \cal{D}(A)
		\stackrel{(1)}{\iff} \SSS M = A \hatotimes \SSS
		\stackrel{(2)}{\iff} \rk_A((\SSS M)/M) = \rk_A((A \hatotimes \SSS)/M).
	\end{align}
	In particular, $\cal{D}(\bb{C}) = \coprod_\ell \{M \in \cal{Q}^\ell(\bb{C}) \mid \gap(M) = \ell\}$.
\end{lem}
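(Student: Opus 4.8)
The plan is to isolate the one genuinely new input — the case where $A$ is a field — and then obtain everything else formally. Two of the implications are immediate. If $M \in \cal{D}(A)$, pick $u \in M \cap (A \hatotimes \SSS)^\times$; then $A \hatotimes \SSS = (A \hatotimes \SSS)\,u \subseteq \SSS M \subseteq A \hatotimes \SSS$, so $\SSS M = A \hatotimes \SSS$. And if $\SSS M = A \hatotimes \SSS$, then $(\SSS M)/M = (A \hatotimes \SSS)/M$ and the two ranks in $(2)$ agree trivially. So it remains only to prove ``$\SSS M = A \hatotimes \SSS$'' assuming the rank equality in $(2)$, and ``$M \in \cal{D}(A)$'' assuming $\SSS M = A \hatotimes \SSS$.

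I would treat the field case $A = \kappa$ first; it is the substance of the lemma. Via the fixed uniformization, $\kappa \hatotimes \SSS \simeq \prod_{i=1}^b \kappa[\![\varpi_i]\!]$ is a finite product of local rings, with orthogonal idempotents $e_i$. Any module over this ring — in particular $\SSS M$, the submodule it generates from $M$ — splits as $\bigoplus_i e_i(-)$, and $e_i(\SSS M)$ is the ideal of $\kappa[\![\varpi_i]\!]$ generated by the $i$-th components of elements of $M$; since $\kappa[\![\varpi_i]\!]$ is local with maximal ideal $(\varpi_i)$, that ideal is everything exactly when some element of $M$ has nonzero $\varpi_i$-constant term. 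Introducing the $\kappa$-linear constant-term map $c = (c_1, \ldots, c_b)\colon M \to \kappa^b$ and setting $V = c(M)$, this says: $\SSS M = \kappa \hatotimes \SSS$ iff $V$ lies in no coordinate hyperplane $\{x_i = 0\}$. But a vector space over the infinite field $\kappa$ is never a finite union of proper subspaces, so $V$ then contains a vector $v$ with every coordinate nonzero, and any $m \in M$ with $c(m) = v$ has each of its $b$ components a unit in the corresponding $\kappa[\![\varpi_i]\!]$, hence is a unit of $\kappa \hatotimes \SSS$ — i.e.\ $M \in \cal{D}(\kappa)$; conversely $M \in \cal{D}(\kappa)$ trivially forces $V \not\subseteq \{x_i = 0\}$ for all $i$. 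Since $M \subseteq \SSS M \subseteq \kappa \hatotimes \SSS$ with all quotients finite-dimensional, $\SSS M = \kappa \hatotimes \SSS$ is moreover equivalent to $\dim_\kappa((\SSS M)/M) = \dim_\kappa((\kappa \hatotimes \SSS)/M)$. Taking $\kappa = \bb{C}$ and using that $\rk_\bb{C}(\SSS/M) = \ell$ for $M \in \cal{Q}^\ell(\bb{C})$ while $\rk_\bb{C}((\SSS M)/M) = \gap(M)$ by definition of the gap, the ``in particular'' clause follows at once.

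For a general $\bb{C}$-algebra $A$ — which I may assume Noetherian, since $\cal{Q}^\ell$ is of finite type — I would descend to the residue fields. Because $(A \hatotimes \SSS)/M$ is locally free it is $A$-flat, and because $A \hatotimes \SSS = \prod_i A[\![\varpi_i]\!]$ is flat over the Noetherian ring $A$, so is $M$; hence for every maximal ideal $\fr{m}$ the image $\overline{M}$ of $M$ in $(A/\fr{m}) \hatotimes \SSS$ is $M \otimes_A (A/\fr{m})$, of codimension $\ell$, and the formation of $\SSS M$ is compatible with this reduction. By the Proposition $(\SSS M)/M$ is locally free, so the rank hypothesis in $(2)$ pins its fibre dimension to $\ell$ at every point; the field case then gives $\SSS \overline{M} = (A/\fr{m}) \hatotimes \SSS$ for all $\fr{m}$, which means the finitely generated $A$-module $(A \hatotimes \SSS)/(\SSS M)$ dies at every residue field and so vanishes by Nakayama — that is, $\SSS M = A \hatotimes \SSS$. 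For the last implication, the field case puts a unit of $(A/\fr{m}) \hatotimes \SSS$ into $\overline{M}$ over every $\fr{m}$, and combining this with the local freeness of $(A \hatotimes \SSS)/M$ upgrades it to a single element of $M$ that is a unit of $A \hatotimes \SSS$, giving $M \in \cal{D}(A)$.

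I expect this upgrading — from ``$\overline{M}$ meets the units over each residue field'' to ``$M$ meets the units of $A \hatotimes \SSS$'' — to be the main obstacle. Over a field it is precisely the statement that a vector space over an infinite field is not a finite union of proper subspaces; over a general base it genuinely uses the flatness of the family, since without it the constant-term image $c(M) \subseteq A^b$ can surject onto every coordinate and yet contain no vector whose coordinates are all units. A secondary subtlety is keeping the rank appearing in $(2)$ and the fibrewise value of $\gap$ in agreement, which is again where the local freeness supplied by the Proposition enters.
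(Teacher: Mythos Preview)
Your argument is correct for the field case, which carries the real content, but the route is considerably more roundabout than the paper's. The paper does not reduce to residue fields at all. For equivalence~(2) it writes a single line: since $\SSS M \subseteq A \hatotimes \SSS$, the module $(\SSS M)/M$ sits inside $(A \hatotimes \SSS)/M$, so the rank condition forces them to coincide. For equivalence~(1) it argues directly that $\SSS M = A \hatotimes \SSS$ iff $\SSS M \ni 1$, iff $sm = 1$ for some $s \in \SSS$ and $m \in M$, and then invokes the explicit product decomposition $A \hatotimes \SSS = \prod_i A[\![\varpi_i]\!]$ to see that this is the same as $m' \in M \cap (A \hatotimes \SSS)^\times$ for some $m'$.

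Your hyperplane-avoidance argument over an infinite field is essentially what underlies the paper's terse passage from ``$1 \in \SSS M$'' to ``a single $m \in M$ is a unit''; the paper simply does not spell it out. What your approach buys is a clean separation between the substantive field case and the bookkeeping over a general base; what it costs is the flatness and Nakayama machinery, together with the ``upgrading'' step you rightly flag as the obstacle. That step is in fact lighter than your sketch suggests: over a \emph{local} $\bb{C}$-algebra $A$, any lift to $M$ of a unit in $\overline{M}$ already lies in $(A \hatotimes \SSS)^\times$, since each of its constant terms lands in $A \setminus \fr{m}_A = A^\times$ --- no further appeal to local freeness of $(A \hatotimes \SSS)/M$ is needed. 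In any case, only field-valued points (indeed only $A = \bb{C}$) are invoked downstream in the proof of \Cref{thm:main}, so your general-$A$ reduction, while a natural instinct, goes beyond what the application requires.
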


\begin{proof}
	Equivalence (2) holds because $SM \subseteq A \hatotimes \SSS$.
	
	As for equivalence (1):
	$\SSS M = A \hatotimes \SSS$ if and only if $\SSS M \ni 1$, if and only if $sm = 1$ for some $s \in \SSS$ and $m \in M$.
	By the explicit description of $A \hatotimes \SSS$, the last condition is equivalent to $s'm' = 1$ for some $s' \in (A \hatotimes \SSS)^\times$ and $m' \in M$, which means $M \in \cal{D}(A)$.
\end{proof}

\subsection{}

Using Weierstrass preparation, we now fix an isomorphism
\begin{align}
	\RRR = \bb{C}[\![x, y]\!]/(f)
\end{align}
such that $\Spec(\RRR) \to \Spec(\bb{C}[\![x]\!])$ is a generically separable cover of degree $n$, fully ramified at $(x, y) = (0, 0)$.
In particular, we can take 
$f(x, y) = y^n + \sum_{i = 1}^n a_i(x) y^{n - i}$ for some $a_i(x) \in \bb{C}[\![x]\!]$ with $a_i(0) = 0$ for all $i$.

For any $\bb{C}$-algebra $A$ and $M \in \CptPic(A)$, we write $\bar{M} = M/xM$, as in the introduction.
We define a $y$-stable partial flag on $\bar{M}$ to be an increasing sequence of $A[y]$-submodules $F = (0 \subseteq \bar{M}^0 \subsetneq \bar{M}^1 \subsetneq \cdots \subsetneq \bar{M}^k = \bar{M})$ such that $\gr_i^F(\bar{M}) = \bar{M}^i/\bar{M}^{i - 1}$ is locally free over $A$ for all $i$.
The \dfemph{parabolic type} of $F$ is the integer composition $\nu$ of $n$ in which $\nu_i = \rk_A(\gr_i^F(\bar{M}))$.
For any such composition $\nu$, let $\CptPic_\nu$ be the ind-scheme defined by
\begin{align}
	\CptPic_\nu(A) = \left\{(M, F) \,\middle| 
	\begin{array}{l}
		M \in \CptPic_\nu(A),\\
		\text{$F$ is a $y$-stable partial flag on $\bar{M}$ of type $\nu$}
	\end{array}\right\}.
\end{align}
We define $\CptPic_\nu(\gap), \Quot_\nu^\ell(E), \cal{D}_\nu$ analogously.
Now, \Cref{cor:gap} and \Crefrange{lem:domain}{lem:gap-vs-len} imply analogues where $\CptPic_\nu$, $\CptPic_\nu(\gap)$, $\cal{Q}_\nu^\ell$, $\cal{D}_\nu$ replace $\CptPic$, $\CptPic(\gap)$, $\cal{Q}^\ell$, $\cal{D}$.

\subsection{Proof of \Cref{thm:main}}

Recall that we want to show
\begin{align}
	\sf{Quot}_\nu^\mot(\sf{q})
	&= \frac{1}{(1 - \sf{q})^b}\,\sf{Pic}_\nu^\mot(\sf{q}),
	&\text{where}\:
	&\left\{\begin{array}{l}
		\sf{Quot}_\nu^\mot(\sf{q})
		= \sum_\ell \sf{q}^\ell [\cal{Q}_\nu^\ell],\\
		\sf{Pic}_\nu^\mot(\sf{q})
		= \sum_\gap \sf{q}^\gap [\CptPic_\nu(\gap)/\Lattice].
	\end{array}\right.
\end{align}
\Cref{lem:domain} and \eqref{eq:shift} together imply that
\begin{align}
	\cal{Q}_\nu^\ell = \coprod_{\substack{(\gap, \vec{x}) \in \bb{Z}_{\geq 0} \times \bb{Z}_{\geq 0}^b \\ \gap + \ur{sum}(\vec{x}) = \ell}} \varpi^{\vec{x}} \cdot (\cal{D} \cap \cal{Q}_\nu^\gap).
\end{align}
So in the Grothendieck group of $\Schfin{\bb{C}}$, we have
\begin{align}
	\sf{Quot}_\nu^\mot(\sf{q})
	= \frac{1}{(1 - \sf{q})^b}
	\sum_\gap \sf{q}^\gap [\cal{D} \cap \cal{Q}_\nu^\gap].
\end{align}
But \Cref{lem:gap-vs-len} implies that $\cal{D} \cap \cal{Q}_\nu^\gap = \cal{D} \cap \CptPic_\nu(\gap)$ for all $\gap$.
So we also have
\begin{align}
	\sf{Pic}_\nu^\mot(\sf{q})
	= \sum_\gap \sf{q}^\gap [\cal{D} \cap \CptPic_\nu(\gap)]
	= \sum_\gap \sf{q}^\gap [\cal{D} \cap \cal{Q}_\nu^\gap]
\end{align}
in the Grothendieck group, as desired.

\begin{rem}
	Zhiwei Yun has pointed out to us that \Cref{thm:main} extends beyond the planar case to any curve germ where both sides are well-defined, \emph{i.e.}, where the functors $\CptPic/\Lattice$ and $\cal{Q}^\ell$ for $\ell \geq 0$ are all schemes of finite type.
	
	However, \Cref{conj:main} fails for non-planar germs.
	If $\RRR = \bb{C}[\![x, y, z]\!]/(xy, xz, yz)$, the union of the coordinate axes in $xyz$-space, then $\SSS = \bb{C}[\![x]\!] \times \bb{C}[\![y]\!] \times \bb{C}[\![z]\!]$.
	Using \cite[Prop.\@ 6.1]{brv}, we find that
	\begin{align}
		\sf{Hilb}(\sf{q}, \sf{t})
		= \frac{1}{(1 - \sf{q})^3}
		(1 - 2\sf{q} + \sf{q}^2 (\sf{t}^4 + \sf{t}^2 + 1) + \sf{q}^3 (\sf{t}^4 - 2\sf{t}^2)).
	\end{align}
	By contrast, \cite[Ex.\ 2.7--2.8]{yun_13}, \cite{katz}, and \Cref{thm:main} together give
	\begin{align}
		\sf{Quot}(\sf{q}, \sf{q}^{\frac{1}{2}} \sf{t})
		= \frac{1}{(1 - \sf{q})^3}
		(1 - 2\sf{q} + \sf{q}^2 (\sf{t}^2 + 1) + \sf{q}^3 (\sf{t}^4 - 2\sf{t}^2) + \sf{q}^4 \sf{t}^4).
	\end{align}
	It would be interesting to understand why the difference remains small.
\end{rem}

\subsection{Cherednik's Conjecture}\label{subsec:cherednik}

Below, we explain how \Cref{conj:main} is essentially equivalent to Conjecture 4.5 of \cite{cherednik} via \Cref{thm:main}.

For any integer $e$, let $\CptPic^e \subseteq \CptPic$ be the sub-ind-scheme defined by
\begin{align}
	\CptPic^e(A) = \{M \in \CptPic(A) \mid \text{$e = \rk_A((A \hatotimes \fr{m}_R^{-i})/M) - \rk_\bb{C}(\fr{m}_R^{-i}/\RRR)$ for all $i \gg 0$}\}.
\end{align}
These are precisely the connected components of $\CptPic$.
By definition, $\CptJac = \CptPic^0$.

The discussion in the introduction explained how \Cref{conj:main} and \Cref{thm:main} would together imply \eqref{eq:hilb-vs-p}.
In turn, the unibranch case of \eqref{eq:hilb-vs-p} implies \eqref{eq:cherednik-weight}, because if $b = 1$, then $\Lattice = \varpi^\bb{Z}$ acts simply transitively on the set of connected components of $\CptPic$, giving $\CptJac = \CptPic/\Lattice$.

Next, we explain how \eqref{eq:cherednik-weight} is related to a point-counting analogue.
Following Katz \cite{katz}, a class $[X] \in K_0(\Schfin{\bb{C}})$ is called \dfemph{strongly polynomial count} if and only if, for some finitely-generated subring $B \subseteq \bb{C}$, spreading out of $[X]$ to a class $[\cal{X}] \in K_0(\Schfin{B})$, and polynomial $p(X, \sf{t}) \in \bb{Z}[\sf{t}]$, we have $|\cal{X}_\bb{F}(\bb{F})| = p(X, q)$ for any finite field $\bb{F} = \bb{F}_q$ and ring morphism $B \to \bb{F}$.
Katz shows that in this case, $\chi(X, \sf{t}) = p(X, \sf{t}^2)$.
So if $[\cal{H}^\ell]$ and $[\CptJac(\gap)]$ are strongly polynomial count for all $\ell$ and $\gap$, then \eqref{eq:cherednik-weight} is equivalent to the statement that
\begin{align}\label{eq:cherednik}
	\sum_\ell t^\ell |\cal{H}^\ell_\bb{F}(\bb{F})||_{q \to qt} 
	\stackrel{?}{=} \frac{1}{1 - t}  \sum_\gap t^\gap |(\CptJac(\gap))_\bb{F}(\bb{F})|
\end{align}
for infinitely many (equivalently, all) finite fields $\bb{F} = \bb{F}_q$, where we have abused notation by conflating $\cal{H}^\ell$ and $\CptJac(\gap)$ with their spreadings out.

Lastly, we relate \eqref{eq:cherednik} to \cite[Conj.\ 4.5]{cherednik}.
In \emph{loc.\@ cit.}, Cherednik's $\cal{R}$ and $\cal{O} = \bb{C}[\![z]\!]$ are the respective analogues of our $\RRR$ and $\SSS = \bb{C}[\![\varpi]\!]$ over $\bb{F}$.
In particular, if they arise from $\RRR$ and $\SSS$ by spreading out, then his $J_\cal{R}(\bb{F})$ is our $\coprod_\gap \varpi^{-\gap} \CptJac(\gap)_\bb{F}(\bb{F})$, which is also $\cal{D}_\bb{F}(\bb{F})$; his $\cal{H}_\mot^0(q, t)$ is our $\sum_\gap t^\gap |\CptJac(\gap)_\bb{F}(\bb{F})|$; and his $Z(q, t)$ is our $\sum_\ell t^\ell |\cal{H}^\ell_\bb{F}(\bb{F})|$.
In this case, \eqref{eq:cherednik} coincides with \cite[Conj.\ 4.5]{cherednik}.

\subsection{}\label{subsec:examples}

We compute some minimal examples with $n = 2$ and $\nu$ trivial.

\begin{ex}\label{ex:node}
	Take $f(x, y) = y^2 - x^2$.
	Setting $\varpi = y + x$ and $\varrho = y - x$ lets us write $\RRR = \bb{C}[\![\varpi, \varrho]\!]/(\varpi\varrho)$ and $\SSS = \bb{C}[\![\varpi]\!] \times \bb{C}[\![\varrho]\!]$.
	
	For all integers $i, j$ and $\lambda \in \bb{C}^\times$, consider the $\RRR$-submodules of $K$ given by $M_{i, j, \lambda} = \langle (\varpi^i, \lambda \varrho^j)\rangle$ and $N_{i, j} = \langle (\varpi^i, 0), (0, \varrho^j)\rangle$.
	We compute that $\CptPic^e(\bb{C})$ consists of the $M_{i, j, \lambda}$ such that $i + j = e$ and the $N_{i, j}$ such that $i + j = e + 1$.
	With more work, one can check that $\CptPic^e$ is an infinite chain of projective lines intersecting transversely, where the sets $\{M_{i, j, \lambda} \mid \lambda \in \bb{C}^\times\}$ are copies of $\bb{G}_m \vcentcolon= \bb{A}^1 \setminus \{0\}$ and the points $N_{i, j}$ are the points of intersection.
	Embedding $\cal{H}^\ell$ and $\cal{Q}^\ell$ into $\CptPic$, we compute:
	\begin{itemize}
		\item 	$\cal{H}^0(\bb{C}) = \{M_{0, 0}\}$.
		
		\item 	$\cal{H}^\ell(\bb{C})$ for $\ell \geq 1$ consists of the $M_{i, j, \lambda}$ such that $i + j = \ell$ and the $N_{i, j}$ such that $i + j = \ell + 1$, for $i, j \geq 1$.
		
		\item 	$\cal{Q}^\ell(\bb{C})$ for any $\ell$ consists of the $M_{i, j, \lambda}$ such that $i + j = \ell - 1$ and the $N_{i, j}$ such that $i + j = \ell$, for $i, j \geq 0$.
		
	\end{itemize}
Finally, $\cal{D} = \{M_{0, 0, \lambda} \mid \lambda\} \sqcup \{N_{0, 0}\}$, where $\gap(N_{0, 0}) = 0$ and $\gap(M_{0, 0, \lambda}) = 1$.
We get
\begin{align}
\sf{Hilb}_{(2)}^\mot(\sf{q}) 
&\vcentcolon= \sum_\ell \sf{q}^\ell [\cal{H}^\ell]
&&= 1 + \sum_{\ell \geq 1} \sf{q}^\ell (\ell + (\ell - 1) [\bb{G}_m]),\\
\sf{Quot}_{(2)}^\mot(\sf{q}) 
&= \sum_\ell \sf{q}^\ell [\cal{Q}^\ell]
&&= \sum_{\ell \geq 0} \sf{q}^\ell (\ell + 1 + \ell [\bb{G}_m]),\\
\sf{Pic}_{(2)}^\mot(\sf{q}) 
&= \sum_\gap \sf{q}^\gap [\CptPic(\gap)/\Gamma]
&&= 1 + \sf{q}[\bb{G}_m].
\end{align}
\end{ex}

\begin{ex}\label{ex:cusp}
Take $f(x, y) = y^2 - x^3$.
Setting $x = \varpi^2$ and $y = \varpi^3$ lets us write $\RRR = \bb{C}[\![\varpi^2, \varpi^3]\!]$ and $\SSS = \bb{C}[\![\varpi]\!]$.

For all integers $i$ and $\lambda \in \bb{C}$, consider the $\RRR$-submodules of $K$ given by $M_{i, \lambda} = \langle \varpi^i + \lambda \varpi^{i + 1}\rangle$ and $N_i = \langle \varpi^i, \varpi^{i + 1}\rangle$.
We compute that $\CptPic^e(\bb{C})$ consists of the $M_{e - 1, \lambda}$, for all $\lambda \in \bb{C}$, and $N_e$.
One can check that $\CptPic^e$ is a projective line in which $\{M_{e - 1, \lambda} \mid \lambda \in \bb{C}\}$ corresponds to $\bb{A}^1$ and and $N_e$ corresponds to $\infty$.
Embedding $\cal{H}^\ell$ and $\cal{Q}^\ell$ into $\CptPic$, we compute:
\begin{itemize}
\item 	$\cal{H}^0(\bb{C}) = \{M_0\}$ and $\cal{H}^1(\bb{C}) = \{N_2\}$.

\item 	$\cal{H}^\ell(\bb{C})$ for $\ell \geq 2$ consists of the $M_{\ell, \lambda}$, for all $\lambda \in \bb{C}$, and $N_{\ell + 1}$.

\item 	$\cal{Q}^0(\bb{C}) = \{N_0\}$.

\item 	$\cal{Q}^\ell(\bb{C})$ for $\ell \geq 1$ consists of the $M_{\ell - 1, \lambda}$, for all $\lambda \in \bb{C}$, and $N_\ell$.

\end{itemize}
Finally, $\cal{D} = \{M_{0, \lambda} \mid \lambda\} \sqcup \{N_0\}$, where $\gap(N_0) = 0$ and $\gap(M_{0, \lambda}) = 1$.
We get
\begin{align}
\sf{Hilb}_{(2)}^\mot(\sf{q}) 
&\vcentcolon= \sum_\ell \sf{q}^\ell [\cal{H}^\ell]
&&= 1 + \sf{q} + \sum_{\ell \geq 2} \sf{q}^\ell [\bb{P}^1],\\
\sf{Quot}_{(2)}^\mot(\sf{q}) 
&= \sum_\ell \sf{q}^\ell [\cal{Q}^\ell]
&&= 1 + \sum_{\ell \geq 1} \sf{q}^\ell [\bb{P}^1],\\
\sf{Pic}_{(2)}^\mot(\sf{q}) 
&= \sum_\gap \sf{q}^\gap [\CptPic(\gap)/\Gamma] 
&&= 1 + \sf{q}[\bb{A}^1].
\end{align}
\end{ex}

\section{Springer Actions}\label{sec:springer}

\subsection{}\label{subsec:partial-flag}

In this section, we explain how the collection of polynomials $\{\chi(X_\nu, \sf{t})\}_\nu$, where $X_\nu$ is one of $\CptPic_\nu/\Lattice$, $\CptPic_\nu(\gap)/\Lattice$, $\Quot_\nu^\ell(E)$, \emph{etc.}, can be packaged into a single symmetric function.
We also introduce variants of these schemes that we will need in \Crefrange{sec:coprime}{sec:noncoprime}.
Throughout, we will use the formalism of quotient stacks (in the smooth topology), but keep our exposition self-contained beyond the definition of a stack via its functor of points.

\subsection{}

Fix an integer $n > 0$.
Let $\cal{N}$ be the variety of nilpotent matrices in $\fr{gl}_n$.
By definition, $[\cal{N}/{\GL_n}]$ is the algebraic stack whose $A$-points form the groupoid of pairs $(V, \theta)$, where $V$ is a locally-free $A$-module of rank $n$ and $\theta$ is a nilpotent endomorphism of $V$, and an isomorphism of pairs $(V, \theta) \xrightarrow{\sim} (V', \theta')$ is an isomorphism of $A$-modules $V \xrightarrow{\sim} V'$ that transports $\theta$ onto $\theta'$.

Recall that the $\GL_n$-orbits on $\cal{N}$ are indexed by the integer partitions of $n$ via Jordan type.
Let $\cal{O}_\lambda \subseteq \cal{N}$ be the orbit indexed by $\lambda \vdash n$.

For each integer composition $\nu$ of $n$, let $\cal{B}_\nu$ be the flag variety of parabolic type $\nu$, whose $\bb{C}$-points parametrize partial flags of type $\nu$ on $\bb{C}^n$.
Let 
\begin{align}
	\widetilde{\cal{N}}_\nu = \{(\theta, F) \in \cal{N} \times \cal{B}_\nu \mid \text{$F$ is $\theta$-stable}\}.
\end{align}
The $A$-points of $[\widetilde{\cal{N}}_\nu/{\GL_n}]$ form the groupoid of tuples $(V, \theta, F)$, where $(V, \theta) \in [\cal{N}/{\GL_n}](A)$ and $F$ is an $\nu$-stable partial flag of type $\nu$ on $V$ in the sense of \S\ref{subsec:partial-flag}.
Let $\pi = \pi_\nu : [\widetilde{\cal{N}}_\nu/{\GL_n}] \to [\cal{N}/{\GL_n}]$ be the forgetful map.
If $\lambda$ is the underlying partition of $\nu$, and $\lambda^t$ is the transpose of $\lambda$, then the image of $\pi_\nu$ is $[\overline{\cal{O}}_{\lambda^t}/{\GL_n}]$, the stack quotient of the orbit closure $\overline{\cal{O}}_{\lambda^t}$.
In particular, $\cal{B}_{(1^n)}$ is the full flag variety and $\pi_{(1^n)}$ is a stacky version of the Springer resolution of $\cal{N}$.

Let $X$ be any stack over $\bb{C}$ and $p : X \to [\cal{N}/{\GL_n}]$ a morphism.
For each $\nu$, let $X_\nu$, $\pi_X = \pi_{X, \nu}$, and $p_\nu$ be defined by the cartesian square:
\begin{equation}\label{eq:springer-pullback}
	\begin{tikzpicture}[baseline=(current bounding box.center), >=stealth]
		\matrix(m)[matrix of math nodes, row sep=2.5em, column sep=3em, text height=2ex, text depth=0.5ex]
		{ 		
			X_\nu
			&{[\widetilde{\cal{N}}_\nu/{\GL_n}]}\\	
			X
			&{[\cal{N}/{\GL_n}]}\\
		};
		\path[->, font=\scriptsize, auto]
		(m-1-1) edge node{$p_\nu$} (m-1-2)
		(m-1-1) edge node[left]{$\pi_X$} (m-2-1)
		(m-1-2) edge node{$\pi$} (m-2-2)
		(m-2-1) edge node{$p$} (m-2-2);
	\end{tikzpicture}
\end{equation}
In particular, taking $X = \CptPic$ and $p(M) = (\bar{M}, y)$ yields $X_\nu = \CptPic_\nu$.
Analogous statements hold for $\CptPic(\gap)$, $\Quot^\ell(E)$, and $\cal{D}_\nu$, as well as the quotients $\CptPic/\Lattice$, $\CptPic(\gap)/\Lattice$ once we observe that the map $p$ for $X = \CptPic$ is invariant under $\Lattice$.

\subsection{}

Now suppose that $X$ is a scheme of finite type.
In this case we write $\ur{H}_c^\ast(X)$ to denote the compactly-supported cohomology of $X$ with complex coefficients, and $\sf{W}_{\leq \ast}$ to denote its weight filtration.
The \dfemph{virtual weight polynomial} of $X$ is
\begin{align}\label{eq:virtual-weight}
	\chi(X, \sf{t}) = \sum_{i, j} {(-1)^i} \sf{t}^j \dim \gr_j^\sf{W} \ur{H}_c^i(X)
\end{align}
by definition.

For any finite group $G$, we write $K_0(G)$ to denote its representation ring.
When there is a weight-preserving action of $G$ on $\ur{H}_c^\ast(X)$, we may regard $\chi(X, \sf{t})$ as an element of $\bb{Z}[\sf{t}] \otimes K_0(G)$.

Let $\bb{K}$ be a field.
As in the introduction, let $\Lambda_\bb{K}^n = \Lambda_\bb{K}^n[\vec{Y}]$ be the vector space of degree-$n$ symmetric functions in a family of variables $\vec{Y} = (Y_i)_{i = 1}^\infty$ over $\bb{K}$.
Let $\{s_\lambda\}_{\lambda \vdash n}$, \emph{resp.}\ $\{h_\mu\}_{\mu \vdash n}$, be the basis of $\Lambda_\bb{K}^n$ of Schur functions, \emph{resp.}\ complete homogeneous symmetric functions \cite{macdonald}.
Let $\langle -, -\rangle$ be the $\bb{K}$-linear \dfemph{Hall inner product} on $\Lambda_\bb{K}^n$ defined by orthonormality of the Schur functions.
When $\bb{K} \supseteq \bb{Q}$, there is a $\bb{K}$-linear isomorphism
\begin{align}
	\cal{F} : \bb{K} \otimes K_0(S_n) \xrightarrow{\sim} \Lambda_\bb{K}^n,
\end{align}
known as the \dfemph{Frobenius character}.
It sends the irreducible character of $S_n$ indexed by $\lambda$ to the Schur function $s_\lambda$, and the character of the induced representation $\text{Ind}_{S_\nu}^{S_n}(1)$ to the complete homogeneous function $h_\mu$, where $S_\mu \subseteq S_n$ is the Young subgroup of type $\mu$.

\begin{prop}\label{prop:springer}
	If $X$ is of finite type, then there is a weight-preserving action of $S_n$ on $X_{(1^n)}$ such that $\ur{H}_c^\ast(X_\nu) = \ur{H}_c^\ast(X_{(1^n)})^{S_\nu}$ for all $\nu$.
	In particular, 
	\begin{align}
		\chi(X_\nu, \sf{t}) = \langle h_\mu, \cal{F}\chi(X_{(1^n)}, \sf{t})\rangle,
	\end{align}
	where $\mu$ is the integer partition obtained by sorting $\nu$.
	Moreover, as we run over $\nu$, these identities uniquely determine $\chi(X_{(1^n)}, \sf{t})$ as an element of $\bb{K} \otimes K_0(S_n)$.
\end{prop}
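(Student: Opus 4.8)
The plan is to construct the $S_n$-action geometrically using the classical Springer-type correspondence for the stacky Springer map $\pi = \pi_{(1^n)}$, then transport it along the base change in \eqref{eq:springer-pullback}. First I would recall that the stacky Springer resolution $\pi : [\widetilde{\cal{N}}_{(1^n)}/{\GL_n}] \to [\cal{N}/{\GL_n}]$ is small (it is the descent of the honest small map $\widetilde{\cal{N}} \to \cal{N}$), so $R\pi_\ast \bb{C}$ is a single perverse sheaf (up to shift) on which $S_n$ acts by the classical Springer action, exhibiting $R\pi_{(1^n)}^{\nu}{}_\ast \bb{C}$ — i.e.\ $R\pi_\ast$ of the constant sheaf pushed through the partial-flag variety — as the $S_\nu$-invariants. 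The crucial input is the well-known fact that the partial-flag Springer map factors the full-flag one as $[\widetilde{\cal{N}}_\nu/{\GL_n}] \to [\cal{N}/{\GL_n}]$ with $R\pi_{\nu\ast}\bb{C} = (R\pi_{(1^n)\ast}\bb{C})^{S_\nu}$, and that this $S_\nu$-invariance is compatible with the weight filtration because the $S_n$-action on $R\pi_{(1^n)\ast}\bb{C}$ is by sheaf automorphisms, hence preserves the weight structure on hypercohomology.

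Next I would pull everything back along $p : X \to [\cal{N}/{\GL_n}]$. By proper base change applied to the cartesian square \eqref{eq:springer-pullback}, we get $R\pi_{X\ast}\bb{C} \simeq p^\ast R\pi_{(1^n)\ast}\bb{C}$ as objects of the derived category on $X$, and the $S_n$-action transports to an $S_n$-action on this complex. Taking compactly-supported hypercohomology — which makes sense since $X$ is of finite type and $\pi_X$ is proper, so $\ur{H}_c^\ast(X_{(1^n)}) = \ur{H}_c^\ast(X, R\pi_{X\ast}\bb{C})$ — yields a weight-preserving $S_n$-action on $\ur{H}_c^\ast(X_{(1^n)})$, where weight-preservation follows because the action is induced from sheaf automorphisms and $\ur{H}_c^\ast(X, -)$ carries a functorial mixed Hodge structure. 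Likewise, applying $\ur{H}_c^\ast(X, p^\ast(-))$ to the identification $R\pi_{\nu\ast}\bb{C} = (R\pi_{(1^n)\ast}\bb{C})^{S_\nu}$ and using that $(-)^{S_\nu}$ is exact in characteristic zero, hence commutes with hypercohomology, gives $\ur{H}_c^\ast(X_\nu) = \ur{H}_c^\ast(X_{(1^n)})^{S_\nu}$.

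With the $S_n$-module structure in hand, the identity $\chi(X_\nu, \sf{t}) = \langle h_\mu, \cal{F}\chi(X_{(1^n)}, \sf{t})\rangle$ is a formal consequence: taking $S_\nu$-invariants corresponds under the Frobenius character $\cal{F}$ to the Hall pairing against $\mathrm{Ind}_{S_\nu}^{S_n}(1)$, whose Frobenius image is $h_\mu$; and since $\mu$ only depends on the underlying partition of $\nu$ (the invariants under $S_\nu$ and under the sorted Young subgroup coincide), the right side is well-defined. The uniqueness of $\chi(X_{(1^n)}, \sf{t})$ in $\bb{K} \otimes K_0(S_n)$ then follows because $\{h_\mu\}_{\mu \vdash n}$ spans $\Lambda_\bb{K}^n$ (indeed it is a basis), so knowing all the pairings $\langle h_\mu, -\rangle$ determines an element of $\Lambda_\bb{K}^n$, and $\cal{F}$ is an isomorphism.

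The main obstacle is verifying that the Springer $S_n$-action genuinely preserves the weight filtration after the base change — i.e.\ that it does not merely preserve cohomological degree but the finer mixed Hodge grading. I would handle this by emphasizing that the action on $R\pi_{(1^n)\ast}\bb{C}$ comes from automorphisms of this complex \emph{as a mixed Hodge module} (the decomposition theorem for the small map $\widetilde{\cal{N}} \to \cal{N}$ is a decomposition in $\mathrm{MHM}$, and the $S_n$-action permutes the summands indexed by irreducible representations compatibly), so that its pullback and the resulting action on $\ur{H}_c^\ast(X_{(1^n)})$ are morphisms of mixed Hodge structures, hence strictly compatible with $\sf{W}_{\leq\ast}$. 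The secondary subtlety — that $X$ is a scheme rather than a stack, while the Springer map lives over the stack $[\cal{N}/{\GL_n}]$ — is resolved because $p$ factors through the stack but $X_\nu$ itself is an honest scheme (it is cut out inside $X \times \cal{B}_\nu$ by the $\theta$-stability condition on the flag, once one trivializes the bundle, with the $\GL_n$ having been quotiented out), so all the cohomology in sight is ordinary scheme cohomology with its standard mixed Hodge structure.
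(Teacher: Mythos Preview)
Your proposal is correct and follows essentially the same approach as the paper: construct the $S_n$-action on the Springer sheaf $R\pi_{(1^n)\ast}\bb{C}$, transport it via proper base change through the cartesian square \eqref{eq:springer-pullback}, use the identification $R\pi_{\nu\ast}\bb{C} \simeq (R\pi_{(1^n)\ast}\bb{C})^{S_\nu}$ (the paper cites Borho--MacPherson here), pass to compactly supported hypercohomology, and finish with Frobenius reciprocity plus the fact that the $h_\mu$ span $\Lambda_\bb{K}^n$. The only cosmetic difference is in the justification of weight-preservation: the paper phrases it as $\End(\cal{S}_{(1^n)})$ being pure of weight $0$, while you argue via automorphisms of mixed Hodge modules, but these amount to the same thing.
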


In what follows, we freely use functors between bounded derived categories of mixed complexes of sheaves with constructible cohomology, where ``mixed'' means we either use mixed Hodge modules, or spread out and reduce to a finite field to use mixed complexes of $\ell$-adic sheaves, fixing an isomorphism $\QL \simeq \bb{C}$.

\begin{proof}
	For all $\nu$, let $\cal{S}_\nu = \pi_{\nu, \ast}\bb{C}$ and $\cal{S}_{X, \nu} = \pi_{X, \nu, \ast} \bb{C}$.
	
	Note that $\cal{S}_{(1^n)}$ is the $\GL_n$-equivariant Springer sheaf.
	By the work of Lusztig \emph{et al.} \cite[Ch.\ 4]{trinh}, the underived endomorphism ring $\End(\cal{S}_{(1^n)})$ is pure of weight $0$ and isomorphic to $\bb{C}S_n$.
	This defines an $S_n$-action on $\cal{S}_{(1^n)}$.
	Since \eqref{eq:springer-pullback} is a cartesian square, base change lifts this action to $\cal{S}_{X, (1^n)} \simeq p^\ast \cal{S}_{(1^n)}$.
	Taking hypercohomology, we get an action of $S_n$ on $\ur{H}_c^\ast(X_{(1^n)})$.
	Since $\End(\cal{S}_{(1^n)})$ is concentrated in weight zero, the last action preserves weights.
	
	For general $\nu$, we have $\cal{S}_\nu \simeq \cal{S}_{(1^n)}^{S_\nu}$ by \cite[\S{2.7}]{bm}.
	So again by the cartesian square \eqref{eq:springer-pullback}, $\cal{S}_{X, \nu} \simeq \cal{S}_{X, (1^n)}^{S_\nu}$.
	Therefore
	\begin{align}
		\ur{H}_c^\ast(X, \cal{S}_{X, \nu}) \simeq \ur{H}_c^\ast(X, \cal{S}_{X, (1^n)}^{S_\nu}) \simeq \ur{H}_c^\ast(X, \cal{S}_{X, (1^n)})^{S_\nu},
	\end{align}
	where the second step uses the fact that the inclusion $\cal{S}_{X, (1^n)}^{S_\nu} \subseteq \cal{S}_{X, (1^n)}$ is split (say, via the isotypic decomposition of $\cal{S}_{X, (1^n)}$).
	Above, the first expression is $\ur{H}_c^\ast(X_\nu)$ and the last expression is $\ur{H}_c^\ast(X_{(1^n)})^{S_\nu}$.
	
	The statements about the Hall inner product and uniqueness now follow from Frobenius reciprocity and the fact that the $h_\mu$ span $\Lambda^n_\bb{K}$.
\end{proof}

\begin{rem}\label{rem:borel-moore}
	We write $\ur{H}_\ast^\BM(X)$ to denote the \dfemph{Borel--Moore homology} of $X$ with complex coefficients, defined via the hypercohomology of the dualizing sheaf on $X$.
	Verdier duality implies that $\ur{H}_c^i(X)$ and $\ur{H}_{-i}^\BM(X)$ are dual vector spaces for all $i$.
	Therefore, \Cref{prop:springer} also implies that $\ur{H}_\ast^\BM(X_\nu) = \ur{H}_\ast^\BM(X_{(1^n)})_{S_\nu}$ for all $\nu$, where $(-)_G$ denotes the coinvariants of a $G$-action.
\end{rem}

\subsection{}

For each integer $r \geq 0$, let $\cal{N}_{\len{r}} \subseteq \cal{N}$ be the union of the orbits indexed by partitions of length $r$, \emph{i.e.}, the subvariety of nilpotent matrices $\theta$ such that $\dim \ker(\theta) = r$.
Let $X_{\len{r}} = p^{-1}(\cal{N}_{\len{r}}) \subseteq X$.

As in the introduction, let $\Psi(\sf{a}, -) : \Lambda_\bb{K}^n \to \bb{K}[\sf{a}]$ be the specialization map 
\begin{align}
	\Psi(\sf{a}, -)
	=
	(1 + \sf{a})\sum_{0 \leq k \leq n - 1}
	{\sf{a}^k} \langle s_{(n - k, 1^k)}, -\rangle.
\end{align}
This map also appears in \cite[Ex.\ 4]{haglund_16} and \cite[Cor.\@ 1]{wilson}.
The next statement is a reformulation of \cite[Lem.\@ 9.3--9.4]{gors}:

\begin{lem}\label{lem:len}
	We have 
	\begin{align}
		\Psi(\sf{a}, \cal{F}\chi(X_{(1^n)}, \sf{t}))
		= \sum_{0 \leq r \leq n}
		\chi(X_{\len{r}}, \sf{t}) 
		\prod_{0 \leq j \leq r - 1}
		{(1 + \sf{a} \sf{t}^{2j})}.
	\end{align}
\end{lem}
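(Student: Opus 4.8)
The plan is to reduce the identity, via \Cref{prop:springer}, to a ``pointwise'' statement indexed by a single nilpotent orbit, and then to invoke the classical closed formula for hook-shape Kostka--Foulkes polynomials --- which is essentially what \cite[Lem.\@ 9.3--9.4]{gors} prove, the Hilbert-scheme structure playing no role.

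First I would simplify the functional $\Psi$. Set $F \vcentcolon= \cal{F}\chi(X_{(1^n)}, \sf{t})$. By \Cref{prop:springer}, every $\chi(X_\nu, \sf{t})$ is $\langle h_\mu, F\rangle$, so the left-hand side of the lemma is $\Psi(\sf{a}, F)$. Using the Jacobi--Trudi identity $s_{(n - k, 1^k)} = \sum_{i = 0}^k (-1)^i h_{n - k + i}e_{k - i}$, reindexing, and the classical relation $\sum_{m = 0}^n (-1)^m e_m h_{n - m} = 0$ for $n \geq 1$, the prefactor $1 + \sf{a}$ telescopes to give
\begin{align}
	(1 + \sf{a})\sum_{k = 0}^{n - 1} \sf{a}^k s_{(n - k, 1^k)} = \sum_{m = 0}^n \sf{a}^m e_m h_{n - m}
\end{align}
in $\Lambda_\bb{Z}^n[\sf{a}]$, hence $\Psi(\sf{a}, -) = \sum_{m \geq 0} \sf{a}^m \langle e_m h_{n - m}, -\rangle$.

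Next I would stratify $X$. Writing $X = \coprod_{\lambda \vdash n} p^{-1}(\cal{O}_\lambda)$ gives $X_{\len{r}} = \coprod_{\ell(\lambda) = r} p^{-1}(\cal{O}_\lambda)$, hence $\chi(X_{\len{r}}, \sf{t}) = \sum_{\ell(\lambda) = r} \chi(p^{-1}(\cal{O}_\lambda), \sf{t})$. On the other hand, because $\cal{S}_{(1^n)}$ and its $S_n$-action are pulled back from $\cal{N}$, because the Springer fibers $\cal{B}_u$ have Tate-type cohomology concentrated in even degrees, and because centralizers of nilpotents in $\GL_n$ are connected --- so that $\cal{S}_{(1^n)}|_{\cal{O}_\lambda}$ is a constant complex with coefficients in the Springer representation of a nilpotent $u_\lambda$ of Jordan type $\lambda$ --- additivity of virtual weight polynomials over the stratification yields
\begin{align}
	F = \sum_{\lambda \vdash n} \chi\bigl(p^{-1}(\cal{O}_\lambda), \sf{t}\bigr)\, R_\lambda(\sf{t}),
	\qquad
	R_\lambda(\sf{t}) \vcentcolon= \cal{F}\Bigl(\textstyle\sum_{i \geq 0} \sf{t}^{2i}\,[\ur{H}^{2i}(\cal{B}_{u_\lambda})]\Bigr).
\end{align}
Since $R_\lambda(\sf{t})$ is independent of $X$, and the left-hand side of the lemma is linear in $F$, the lemma reduces to the universal symmetric-function identity
\begin{align}\label{eq:len-local}
	\Psi\bigl(\sf{a}, R_\lambda(\sf{t})\bigr) = \prod_{j = 0}^{\ell(\lambda) - 1}\bigl(1 + \sf{a}\sf{t}^{2j}\bigr), \qquad \lambda \vdash n.
\end{align}

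Finally I would prove \eqref{eq:len-local}. By Springer theory (Garsia--Procesi), $R_\lambda(\sf{t})$ is the modified Hall--Littlewood symmetric function $\widetilde{H}_\lambda[\vec{Y}; \sf{t}^2]$, whose Schur expansion has the modified Kostka--Foulkes polynomials $\widetilde{K}_{\mu, \lambda}(\sf{t}^2)$ as coefficients. Combined with the first step, $\Psi(\sf{a}, R_\lambda(\sf{t})) = \sum_m \sf{a}^m \langle e_m h_{n - m}, \widetilde{H}_\lambda[\vec{Y}; \sf{t}^2]\rangle$. Recognizing $\sum_m \sf{a}^m e_m h_{n - m}$ as the degree-$n$ part of $\prod_i (1 + \sf{a}y_i)\,\prod_i (1 - y_i)^{-1}$ and applying the Cauchy kernel identity turns this generating function into a single plethystic evaluation of $\widetilde{H}_\lambda[\vec{Y}; \sf{t}^2]$, which by the standard formula $\widetilde{H}_\lambda[1 - v; \sf{q}] = \prod_{j = 0}^{\ell(\lambda) - 1}(1 - v\sf{q}^j)$ equals $\prod_{j = 0}^{\ell(\lambda) - 1}(1 + \sf{a}\sf{t}^{2j})$; equivalently, $(1 + \sf{a})\sum_k \sf{a}^k \widetilde{K}_{(n - k, 1^k), \lambda}(\sf{q}) = \prod_{j = 0}^{\ell(\lambda) - 1}(1 + \sf{a}\sf{q}^j)$, the classical hook Kostka--Foulkes formula. (Sanity checks: for $\lambda = (n)$ both sides of \eqref{eq:len-local} are $1 + \sf{a}$; for $\lambda = (1^n)$ it is the $\sf{q}$-binomial theorem applied to the $\sf{q}$-analogues $\binom{n - 1}{k}_{\sf{q}}$ of $\dim s_{(n - k, 1^k)}$.)

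The step I expect to require the most care is this last one: identifying $R_\lambda(\sf{t})$ with $\widetilde{H}_\lambda[\vec{Y}; \sf{t}^2]$ in a consistent convention (there is a well-known transpose ambiguity in the Springer correspondence) and pinning down the plethystic evaluation. Everything before it is formal --- \Cref{prop:springer} and elementary symmetric-function manipulation in the first step, additivity of virtual weight polynomials plus connectedness of $\GL_n$-centralizers in the second. One can also bypass the combinatorial input to \eqref{eq:len-local} and argue geometrically: rewrite $(1 + \sf{a})\sum_k \sf{a}^k \langle s_{(n - k, 1^k)}, R_\lambda(\sf{t})\rangle$ as $\sum_m \sf{a}^m \langle \mathrm{Ind}_{S_m \times S_{n - m}}^{S_n}(\mathrm{sgn} \boxtimes 1),\, \ur{H}^\ast(\cal{B}_{u_\lambda})\rangle$ by Frobenius reciprocity, identify the $S_{n - m}$-invariants of $\ur{H}^\ast(\cal{B}_{u_\lambda})$ with the cohomology of the type-$(1^m, n - m)$ partial-flag Spaltenstein variety of $u_\lambda$, and extract its sign-isotypic part under the residual $S_m$-action, which comes down to locating the relevant generators in cohomological degrees $0, 2, \ldots, 2(\ell(\lambda) - 1)$.
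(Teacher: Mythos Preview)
Your argument is correct, and the paper itself gives no proof beyond the citation to \cite[Lem.~9.3--9.4]{gors}. Your orbit-by-orbit reduction to the hook Kostka--Foulkes identity $(1+\sf{a})\sum_k \sf{a}^k\,\widetilde{K}_{(n-k,1^k),\lambda}(\sf{t}^2)=\prod_{j=0}^{\ell(\lambda)-1}(1+\sf{a}\sf{t}^{2j})$ is exactly the content behind that reference, so there is nothing to contrast; the convention worry you flag is harmless in the paper's indexing, since $\cal{O}_\lambda$ has $\dim\ker=\ell(\lambda)$ and the Garsia--Procesi identification yields $R_\lambda(\sf{t})=\widetilde{H}_\lambda[\vec{Y};0,\sf{t}^2]$, whose plethystic evaluation at $1-u$ is $\prod_{j=0}^{\ell(\lambda)-1}(1-u\,\sf{t}^{2j})$ on the nose.
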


\subsection{}

For each integer $m \geq 0$, let $P_{n - m, m} \subseteq \GL_n$ be a parabolic subgroup whose Levi quotient is isomorphic to $\GL_{n - m} \times \GL_m$: for instance, the appropriate subgroup of block-upper-triangular matrices.
Let $X_{\nest{m}}$ and $\rho_X = \rho_{X, m}$ be defined by the cartesian square:
\begin{equation}
	\begin{tikzpicture}[baseline=(current bounding box.center), >=stealth]
		\matrix(m)[matrix of math nodes, row sep=2.5em, column sep=1em, text height=2ex, text depth=0.5ex]
		{ 		
			X_{\nest{m}}
			&
			&{[\point/{P_{n - m, m}}]}\\	
			X
			&{[\cal{N}/{\GL_n}]}
			&{[\point/{\GL_n}]}\\
		};
		\path[->, font=\scriptsize, auto]
		(m-1-1) edge (m-1-3)
		(m-1-1) edge node[left]{$\rho_X$} (m-2-1)
		(m-1-3) edge (m-2-3)
		(m-2-1) edge node{$p$} (m-2-2)
		(m-2-2) edge (m-2-3);
	\end{tikzpicture}
\end{equation}
For instance, if $X = [\cal{N}/{\GL_n}]$, then $X_{\nest{m}}$ is the stack whose $A$-points form the groupoid of tuples $(V, \theta, V')$, where $(V, \theta) \in [\cal{N}/{\GL_n}](A)$ and $V'$ is an $A$-submodule of $\ker(\theta)$ such that $\ker(\theta)/V'$ is locally free over $A$ of rank $m$.

For all $r$, the map $\rho_{X, m}^{-1}(X_{\len{r}}) \to X_{\len{r}}$ is a locally trivial fibration whose fiber is the Grassmannian of codimension-$m$ subspaces of $\bb{C}^r$.
The virtual weight polynomials of Grassmannians can be computed via their Schubert stratifications, which show them to be $q$-binomial coefficients for $q = \sf{t}^2$.
So, generalizing \cite{ors, gors}, we deduce:

\begin{lem}\label{lem:nest}
	We have 
	\begin{align}
		\sum_{0 \leq m \leq n} \sf{a}^m \sf{t}^{m(m - 1)} \chi(X_{\nest{m}}, \sf{t})
		= \sum_{0 \leq r \leq n}
		\chi(X_{\len{r}}, \sf{t}) 
		\prod_{0 \leq j \leq r - 1}
		{(1 + \sf{a} \sf{t}^{2j})}.
	\end{align}
\end{lem}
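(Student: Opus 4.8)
The plan is to reduce the identity to the finite $q$-binomial theorem, by stratifying $X$ according to the dimension of the kernel and pushing forward the Grassmannian bundles appearing in the set-up just given, exactly as in the proof of \Cref{lem:len} and in \cite{ors, gors}. As in \Cref{prop:springer}, I would take $X$ to be of finite type, so that every virtual weight polynomial below is defined.

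First I would stratify. The orbit decomposition $\cal{N} = \coprod_{0 \le r \le n} \cal{N}_{\len{r}}$ pulls back under $p$ to a stratification $X = \coprod_{0 \le r \le n} X_{\len{r}}$ into locally closed subschemes, and pulling back once more along $\rho_{X, m}$ gives $X_{\nest{m}} = \coprod_{0 \le r \le n} \rho_{X, m}^{-1}(X_{\len{r}})$. Since the virtual weight polynomial is additive over stratifications into locally closed subschemes, this gives
\begin{align}
	\chi(X_{\nest{m}}, \sf{t}) = \sum_{0 \le r \le n} \chi\bigl(\rho_{X, m}^{-1}(X_{\len{r}}), \sf{t}\bigr).
\end{align}

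Next I would compute each summand. Over $X_{\len{r}}$ the sheaf $\ker(\theta)$ is a vector bundle of rank $r$, and, as recalled before the statement, $\rho_{X, m}^{-1}(X_{\len{r}}) \to X_{\len{r}}$ is the bundle whose fibers are the Grassmannians of codimension-$m$ subspaces of the fibers of $\ker(\theta)$ --- a Zariski-locally trivial fibration with fiber the Grassmannian $\mathrm{Gr}(r - m, r)$. Stratifying $X_{\len{r}}$ into locally closed pieces over which this bundle is a product and combining additivity with the product formula $\chi(Y \times F, \sf{t}) = \chi(Y, \sf{t})\, \chi(F, \sf{t})$ would yield
\begin{align}
	\chi\bigl(\rho_{X, m}^{-1}(X_{\len{r}}), \sf{t}\bigr) = \chi(X_{\len{r}}, \sf{t})\, \chi\bigl(\mathrm{Gr}(r - m, r), \sf{t}\bigr).
\end{align}
The affine Schubert stratification shows that $\ur{H}_c^\ast(\mathrm{Gr}(r - m, r))$ is pure and concentrated in even degrees, so $\chi(\mathrm{Gr}(r - m, r), \sf{t}) = \binom{r}{m}_{\sf{t}^2}$, the Gaussian binomial coefficient in $\sf{t}^2$.

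Finally I would assemble the generating function. Substituting the above, exchanging the order of summation, and setting $q = \sf{t}^2$, so that $q^{m(m - 1)/2} = \sf{t}^{m(m - 1)}$, I obtain
\begin{align}
	\sum_{0 \le m \le n} \sf{a}^m \sf{t}^{m(m - 1)} \chi(X_{\nest{m}}, \sf{t}) = \sum_{0 \le r \le n} \chi(X_{\len{r}}, \sf{t}) \sum_{0 \le m \le r} q^{m(m - 1)/2}\, \binom{r}{m}_q\, \sf{a}^m,
\end{align}
where the inner sum runs only up to $r$ since $\binom{r}{m}_q = 0$ for $m > r$. Applying the finite $q$-binomial theorem $\sum_{m \ge 0} q^{m(m - 1)/2}\, \binom{r}{m}_q\, \sf{a}^m = \prod_{0 \le j \le r - 1}(1 + \sf{a} q^j)$ and re-specializing $q = \sf{t}^2$ turns the inner sum into $\prod_{0 \le j \le r - 1}(1 + \sf{a}\sf{t}^{2j})$, which is precisely the right-hand side of the asserted identity. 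I do not expect a genuine obstacle: the whole argument rests on the geometric input already recorded in the text --- Zariski-local triviality of Grassmannian bundles of vector bundles, together with additivity and the K\"unneth product formula for $\chi$ --- plus the classical $q$-binomial theorem. The only step deserving a line of justification is the multiplicativity of $\chi$ across the Grassmannian bundle and the purity of Grassmannian cohomology that makes its virtual weight polynomial equal to the Gaussian binomial coefficient in $\sf{t}^2$.
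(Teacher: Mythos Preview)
Your proposal is correct and follows essentially the same approach as the paper: the paper's entire proof is the paragraph preceding the lemma, which records exactly the two ingredients you use---the locally trivial Grassmannian fibration $\rho_{X,m}^{-1}(X_{\len{r}}) \to X_{\len{r}}$ and the Schubert computation of $\chi(\mathrm{Gr}(r-m,r),\sf{t})$ as a $q$-binomial in $\sf{t}^2$---and then simply states ``we deduce'' the lemma, leaving the $q$-binomial theorem implicit. You have spelled out the details faithfully; the only trivial slip is that the stratification of the Grassmannian is the ordinary Schubert stratification, not the affine one.
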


\subsection{}

In the rest of the paper, we set
\begin{align}
	\cal{F}\sf{Pic}(\sf{q}, \sf{t})
	= \sum_\gap \sf{q}^\gap \cal{F}\chi(\CptPic_{(1^n)}(\gap), \sf{t}).
\end{align}
For any finitely-generated $\RRR$-module $E\subseteq K$, we set
\begin{align}
	\cal{F}\sf{Quot}_E(\sf{q}, \sf{t})
	= \sum_\ell \sf{q}^\ell \cal{F}\chi(\Quot_{(1^n)}^\ell(E), \sf{t}).
\end{align}
The symmetric functions $\cal{F}\sf{Hilb}, \cal{F}\sf{Quot}$ from the introduction are now given by $\cal{F}\sf{Hilb}(\sf{q}, \sf{t}) = \cal{F}\sf{Quot}_\RRR(\sf{q}, \sf{t})$ and $\cal{F}\sf{Quot}(\sf{q}, \sf{t}) = \cal{F}\sf{Quot}_\SSS(\sf{q}, \sf{t})$.
\Cref{conj:main-flag} can be rewritten as the single identity
\begin{align}\label{eq:conj-main-flag}
	\cal{F}\sf{Hilb}(\sf{q}, \sf{t}) \stackrel{?}{=} \cal{F}\sf{Quot}(\sf{q}, \sf{q}^{\frac{1}{2}}\sf{t}).
\end{align}
The virtual weight specialization of \Cref{thm:main} can be rewritten as
\begin{align}\label{eq:thm-main}
	\cal{F}\sf{Quot}(\sf{q}, \sf{t})
	= \frac{1}{(1 - \sf{q})^b}
	\cal{F}\sf{Pic}(\sf{q}, \sf{t}).
\end{align}
Finally, for any finitely-generated $\RRR$-module $E\subseteq K$, we spell out the meaning of $X_{\len{r}}$ and $X_{\nest{m}}$ when $X_\nu = \Quot_\nu^\ell(E)$:
\begin{itemize}
	\item 	$X_{\len{r}}$ is the locally-closed subscheme of $X = \Quot^\ell(E)$ whose $A$-points are those $M \in X(A)$ such that $M/(xM + yM) \simeq \bar{M}/y\bar{M} \simeq \ker(y \mid \bar{M})$ is locally free over $A$ of rank $r$.
	
	\item 	$X_{\nest{m}}$ is the scheme of finite type whose $A$-points parametrize pairs $(M, N)$, where $M \in \Quot^\ell(E)(A)$ and $N \in \Quot^{\ell + m}(E)(A)$ and $xM + yM \subseteq N \subseteq M$.
	Note that these containments are together equivalent to requiring that $N/(xN + yN)$ be a submodule of $M/(xM + yM)$.
	
\end{itemize}
We henceforth write $\Quot_{\len{r}}^\ell(E)$ and $\Quot_{\nest{m}}^\ell(E)$ in place of $\Quot^\ell(E)_{\len{r}}$ and $\Quot^\ell(E)_{\nest{m}}$, respectively.
\Crefrange{lem:len}{lem:nest} imply:

\begin{cor}\label{cor:nest}
	For any finitely-generated $\RRR$-module $E\subseteq K$, we have
	\begin{align}
		\Psi(\sf{a}, \cal{F}\sf{Quot}_E(\sf{q}, \sf{t}))
		= \sum_{\ell, m} \sf{q}^\ell \sf{a}^m \sf{t}^{m(m - 1)}
		\chi(\Quot_{\nest{m}}^\ell(E), \sf{t}).
	\end{align}
\end{cor}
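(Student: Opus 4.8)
The plan is to deduce \Cref{cor:nest} formally, by equating \Cref{lem:len} and \Cref{lem:nest}. The first observation is that both lemmas are stated for an arbitrary scheme $X$ of finite type with a morphism $p\colon X \to [\cal{N}/{\GL_n}]$, and that their right-hand sides are \emph{the same} expression, namely $\sum_{0 \leq r \leq n} \chi(X_{\len{r}}, \sf{t})\prod_{0 \leq j \leq r - 1}(1 + \sf{a}\sf{t}^{2j})$. Equating the two left-hand sides therefore yields, for every such $X$,
\begin{align*}
	\Psi(\sf{a}, \cal{F}\chi(X_{(1^n)}, \sf{t})) = \sum_{0 \leq m \leq n}\sf{a}^m\sf{t}^{m(m - 1)}\,\chi(X_{\nest{m}}, \sf{t}).
\end{align*}

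Next I would apply this to $X = \Quot^\ell(E)$ for each fixed $\ell \geq 0$. This scheme is of finite type, as recalled in the introduction, and it carries the tautological morphism $M \mapsto (\bar{M}, y)$ to $[\cal{N}/{\GL_n}]$, so \Cref{prop:springer} equips the compactly-supported cohomology of its full-flag cover with a weight-preserving $S_n$-action, which is exactly what makes $\cal{F}\chi(X_{(1^n)}, \sf{t})$ and hence the left-hand side above well-defined. By the explicit descriptions of $X_{\len{r}}$ and $X_{\nest{m}}$ recorded immediately before the corollary, $X_{(1^n)} = \Quot_{(1^n)}^\ell(E)$ and $X_{\nest{m}} = \Quot_{\nest{m}}^\ell(E)$, so the displayed identity becomes, for each $\ell$,
\begin{align*}
	\Psi(\sf{a}, \cal{F}\chi(\Quot_{(1^n)}^\ell(E), \sf{t})) = \sum_{0 \leq m \leq n}\sf{a}^m\sf{t}^{m(m - 1)}\,\chi(\Quot_{\nest{m}}^\ell(E), \sf{t}).
\end{align*}
Multiplying by $\sf{q}^\ell$ and summing over $\ell$, and using that $\Psi(\sf{a}, -)$ is linear in its second argument — it is a $\bb{K}$-linear combination of the Hall pairings $\langle s_{(n - k, 1^k)}, -\rangle$, applied coefficient-wise in $\sf{q}$ — the left-hand side becomes $\Psi(\sf{a}, \sum_\ell \sf{q}^\ell\,\cal{F}\chi(\Quot_{(1^n)}^\ell(E), \sf{t})) = \Psi(\sf{a}, \cal{F}\sf{Quot}_E(\sf{q}, \sf{t}))$, which is the assertion.

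I do not expect any genuine obstacle: \Cref{cor:nest} is a purely formal consequence of the two lemmas, whose substantive content (the Springer $S_n$-action, the $q$-binomial weight polynomials of Grassmannians) has already been isolated. The only points worth a sentence of care are the grading normalization — the prefactor $\sf{t}^{m(m - 1)}$ on the nested side must be tracked through \Cref{lem:nest} so that the two sides do not differ by a spurious power of $\sf{t}$ — and the passage from the per-$\ell$ identities to a single identity of power series in $\sf{q}$, which is legitimate because the identity holds coefficient-by-coefficient and $\cal{F}\sf{Quot}_E$ is by definition the corresponding generating series.
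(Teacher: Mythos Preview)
Your proof is correct and is exactly the argument the paper intends: equate the common right-hand side of \Cref{lem:len} and \Cref{lem:nest}, apply the resulting identity to $X = \Quot^\ell(E)$, and sum over $\ell$ using the linearity of $\Psi$. The paper simply records this as ``\Crefrange{lem:len}{lem:nest} imply'' without spelling out the steps you supplied.
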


We set $\cal{H}_{\nest{m}}^\ell = \Quot_{\nest{m}}^\ell(\RRR)$ as in the introduction, and similarly, $\cal{Q}_{\nest{m}}^\ell = \Quot_{\nest{m}}^\ell(\SSS)$.
In \cite{os, ors}, the $\cal{H}_{\nest{m}}^\ell$ are called \dfemph{nested Hilbert schemes}.

\section{Torus Knots}\label{sec:coprime}

\subsection{}

In this section, we give two independent proofs of case (1) of \Cref{thm:ors-quot}, stating in the notation of \S\ref{subsec:intro-toric} that $\bar{\sf{X}}_{n, d}(\sf{a}, \sf{q}, \sf{t}^2) = \Psi(\sf{a}, \cal{F}\sf{Quot}_{n, d}(\sf{q}, \sf{t}))$ for any $d > 0$ coprime to $n$.
The relationship between our proofs is summarized below:
\begin{equation}\label{eq:bridges}
	\begin{tikzpicture}[baseline=(current bounding box.center), >=stealth]
		\matrix(m)[matrix of math nodes, row sep=3em, column sep=3.75em, text height=2ex, text depth=0.5ex]
		{ 		
			\text{EHA}
			&\text{Hikita}
			&\cal{F}\sf{Pic}_{n, d}(\sf{q}, \sf{t})
			&\cal{F}\sf{Quot}_{n, d}(\sf{q}, \sf{t})\\	
			{}
			&\text{Cogen}
			&\bar{\sf{X}}_{n, d}(\sf{a}, \sf{q}, \sf{t}^2)
			&\text{Gen}\\
		};
		\path[->, font=\tiny, auto]
		(m-1-1) edge node[left,fill=white]{\cite{mellit_22, wilson}} (m-2-2);
		\path[<->, font=\tiny, auto]
		(m-1-1) edge node{\cite{mellit_21}} (m-1-2)
		(m-1-2) edge node{\cite{hikita} + \S\ref{subsec:proof-b}} (m-1-3)
		(m-2-2) edge node{\cite{hm}} (m-2-3)
		(m-2-3) edge node{\cite{mellit_22}} (m-2-4);
		\path[->, font=\tiny, auto, densely dotted]
		(m-1-4) edge node{$\begin{array}{l} \text{Cor \ref{cor:nest}} + {}\\ \text{Prop \ref{prop:gen}}\end{array}$} (m-2-4);
		\path[<->, font=\tiny, auto, densely dotted]
		(m-1-3) edge node{Thm \ref{thm:main}} (m-1-4);
	\end{tikzpicture}
\end{equation}
The horizontal arrows indicate identities; the vertical arrows are specializations.
The dotted arrows are new bridges.
Our first proof, labeled (A) in the introduction, follows the lower/right path from $\cal{F}\sf{Quot}_{n, d}$ to $\bar{\sf{X}}_{n, d}$.
Our second proof, labeled (B), follows the upper/left path.

\subsection{}

Let $\RRR = \bb{C}[\![x, y]\!]/(y^n - x^d)$.
Setting $x = \varpi^n$ and $y = \varpi^d$ gives $\RRR = \bb{C}[\![\varpi^n, \varpi^d]\!]$ and $\SSS = \bb{C}[\![\varpi]\!]$ and $K = \bb{C}(\!(\varpi)\!)$, generalizing \Cref{ex:cusp}. 

Note that the delta invariant of $\RRR$ is $\delta = \frac{1}{2}(n - 1)(d - 1)$ by a classical formula of Sylvester.
The number of branches is $b = 1$, so the link of the singularity has one component: \emph{i.e.}, it is a knot.

\subsection{}

Let $\bb{G}_m$ act on $\Spec(\RRR)$ according to $t \cdot (x, y) = (t^n x, t^d y)$, and on $\Spec(K)$ according to $t \cdot \varpi = t\varpi$.
These actions are compatible.
In particular, they induce a $\bb{G}_m$-action on $\CptPic_\unred$:
If $A$ is a $\bb{C}$-algebra and $t \in A^\times = \bb{G}_m(A)$ and $M \subseteq A \hatotimes K$ is an $(A \hatotimes \RRR)$-module corresponding to an $A$-point of $\CptPic_\unred$, then we define $t \cdot M$ to be the rescaling $tM$.
This action restricts to $\CptPic$.

Let $E$ be a finitely-generated $\RRR$-submodule of $K$ fixed by $\bb{C}^\times = \bb{G}_m(\bb{C})$.
The $\bb{G}_m$-action on $\CptPic$ restricts to $\Quot^\ell(E)$ for all $\ell$.
We use this action to skeletonize $\Quot^\ell(E)$ into combinatorics.
Let
\begin{align}
	\Gamma(E)
	&= \{\val_\varpi(s) \mid s \in E \setminus \{0\}\},\\
	I(E)
	&= \{\Delta \subseteq \Gamma(E) \mid \Delta + n, \Delta + d \subseteq \Delta\},\\
	I^\ell(E)
	&= \{\Delta \in I(E) \mid |\Gamma(E) \setminus \Delta| = \ell\}.
\end{align}
Note that $\Gamma(\RRR) = n\bb{Z}_{\geq 0} + d\bb{Z}_{\geq 0}$ and $\Gamma(\SSS) = \bb{Z}_{\geq 0}$.

\begin{rem}
	In general, additive submonoids of $\bb{Z}_{\geq 0}$ are also known as \dfemph{numerical semigroups}.
	A subset of $\bb{Z}$ stable under addition with a numerical semigroup $\Gamma$ is also known a \dfemph{$\Gamma$-module}.
	Thus $\Gamma(\RRR)$ is a numerical semigroup, $\Gamma(E)$ is a $\Gamma(\RRR)$-module, and $I(E)$ is the set of $\Gamma(\RRR)$-submodules of $\Gamma(E)$.
\end{rem}

For all $\Delta \in I(E)$, let
\begin{align}
	\Gen_n(\Delta)
	&= \{k \in \Delta \mid k - n \notin \Delta\},\\
	\Gen(\Delta)
	&= \{k \in \Delta \mid k - n, k - d \notin \Delta\}.
\end{align}
The elements of $\Gen_n(\Delta)$, \emph{resp.}\ $\Gen(\Delta)$, are called the \dfemph{$n$-generators} \cite{gmv}, \emph{resp.}\ \dfemph{generators}, of $\Delta$.
The following lemma can be proved by arguments completely analogous to those of \cite[\S{3}]{piontkowski}, by taking $\Quot^\ell(E)$ in place of $\CptJac$.

\begin{lem}\label{lem:paving}
	In the setup above, the $\bb{G}_m$-action on $\Quot^\ell(E)$ has isolated fixed points.
	We have a bijection from $I^\ell(E)$ to the set of fixed $\bb{C}$-points, given by
	\begin{align}
		\begin{array}{rcl}
			I^\ell(E)
			&\xrightarrow{\sim}
			&\Quot^\ell(E)^{\bb{G}_m},\\
			\Delta 
			&\mapsto 
			&M_\Delta \vcentcolon= \RRR\langle t^k \mid k \in \Delta\rangle.
		\end{array}
	\end{align}
	Moreover, $\Quot^\ell(E)$ is partitioned by the subschemes
	\begin{align}
		\bb{A}_\Delta = \{M \in \Quot^\ell(E) \mid \textstyle\lim_{t \to 0} {(t \cdot M)} = M_\Delta\},
	\end{align}
	and each $\bb{A}_\Delta$ forms an affine space.
\end{lem}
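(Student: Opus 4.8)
The plan is to transfer the analysis of the $\bb{G}_m$-action in \cite[\S{3}]{piontkowski} from the compactified Jacobian to the projective scheme $\Quot^\ell(E)$, using that $E$ is $\bb{C}^\times$-fixed and hence spanned by $\varpi$-monomials. Concretely, for any $M$ set $\Gamma(M) = \{\val_\varpi(m) \mid m \in M \setminus \{0\}\}$ and $\widehat{M} = \RRR\langle \varpi^k \mid k \in \Gamma(M)\rangle$. First I would check that $\Gamma(M) \in I^\ell(E)$: it is stable under $+n$ and $+d$ because $M$ is an $\RRR = \bb{C}[\![\varpi^n, \varpi^d]\!]$-module, it lies in $\Gamma(E)$ because $M \subseteq E$, and $|\Gamma(E) \setminus \Gamma(M)| = \dim_\bb{C}(E/M) = \ell$ by comparing the leading-term filtrations of $M$ and $E$. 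Since $\Quot^\ell(E)$ is proper (it sits as a closed subscheme of a Grassmannian of subspaces of $\fr{m}_\RRR^{-i}/\fr{m}_\RRR^i$ for $i \gg 0$), the limit $\lim_{t \to 0}(t \cdot M)$ exists; rescaling $M$ by $t$ and dividing each generator by its leading power of $t$ extracts precisely the $\bb{C}$-span of the lowest-order monomials of elements of $M$, so $\lim_{t\to 0}(t\cdot M) = \widehat{M} = M_{\Gamma(M)}$. A $\bb{G}_m$-fixed $M$ is a sum of weight spaces, hence monomial, hence equal to $M_{\Gamma(M)}$; conversely $M_\Delta \in \Quot^\ell(E)^{\bb{G}_m}$ whenever $\Delta \in I^\ell(E)$. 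This gives the bijection $I^\ell(E) \xrightarrow{\sim} \Quot^\ell(E)^{\bb{G}_m}$, and finiteness of $I^\ell(E)$ forces the fixed points to be isolated.

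For the paving: by the above, $\bb{A}_\Delta = \{M \mid \widehat{M} = M_\Delta\} = \{M \in \Quot^\ell(E) \mid \Gamma(M) = \Delta\}$, and since every $M$ satisfies $\Gamma(M) \in I^\ell(E)$, the subsets $\bb{A}_\Delta$ partition $\Quot^\ell(E)$; standard properties of $\bb{G}_m$-actions on proper schemes make each $\bb{A}_\Delta$ a locally closed reduced subscheme. To see $\bb{A}_\Delta$ is an affine space I would reproduce Piontkowski's coordinates. Writing $\Gen(\Delta) = \{k_1 < \cdots < k_r\}$ for the generators of the $\RRR$-module $\Delta$, every $M \in \bb{A}_\Delta$ admits a unique generating system in normal form $m_i = \varpi^{k_i} + \sum_{j} c_{ij}\,\varpi^j$, where $j$ ranges over the finitely many elements of $\Gamma(E) \setminus \Delta$ with $j > k_i$ (all monomials $\varpi^j$ with $j \in \Delta$ being absorbed into lower generators). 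This presents $\bb{A}_\Delta$ as a subscheme of an affine space $\bb{A}^N$ with coordinates $c_{ij}$, cut out by the condition that the $\RRR$-module generated by the $m_i$ have value semigroup exactly $\Delta$. Expanding each syzygy of the monomial module $M_\Delta$ — a relation $\varpi^{\alpha} m_i - \varpi^{\beta} m_{i'} - \cdots$ forced by $\Delta$ — into the $c_{ij}$ yields polynomial equations which, as in \cite[\S{3}]{piontkowski}, form a triangular system: each equation solves for one coordinate as a polynomial in coordinates of strictly smaller $\varpi$-weight. Eliminating these coordinates identifies $\bb{A}_\Delta$ with an affine space, whose dimension Piontkowski computes explicitly.

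The main obstacle is essentially bookkeeping: verifying that Piontkowski's argument, written for $E = \RRR$ and for $\CptJac$, goes through for an arbitrary $\bb{C}^\times$-fixed finitely-generated $E \subseteq K$ and for the punctual Quot scheme. The only features used are that $E$ is monomial, that $\RRR$ is generated by the two quasi-homogeneous elements $\varpi^n, \varpi^d$ (so the syzygies of a monomial $\RRR$-module, and hence the shape of the defining equations, are combinatorially controlled), and that $\Quot^\ell(E)$ is a proper reduced $\bb{G}_m$-scheme; none of these uses $E = \RRR$ or the degree-zero condition. One should also confirm that the scheme-theoretic attracting set of $M_\Delta$ agrees with the reduced syzygy stratum, which holds because $\Quot^\ell(E)$ is reduced and the strata carry affine-space charts, leaving no nilpotent discrepancy.
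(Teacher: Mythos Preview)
Your proposal is correct and follows exactly the approach the paper indicates: the paper gives no detailed proof but simply states that the lemma ``can be proved by arguments completely analogous to those of \cite[\S{3}]{piontkowski}, by taking $\Quot^\ell(E)$ in place of $\CptJac$,'' and your write-up is precisely a careful execution of that adaptation. Your explicit verification that Piontkowski's normal-form and syzygy-triangularization argument uses only the monomiality of $E$, the two-generator quasi-homogeneity of $\RRR$, and properness/reducedness of the Quot scheme is exactly the bookkeeping the paper leaves to the reader.
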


Although the partition above is similar to a Bia{\l}ynicki--Birula decomposition, it does not follow from said theorem when $\Quot^\ell(E)$ is singular.

\subsection{}

Recall the nested Quot schemes $\Quot_{\nest{m}}^\ell(E)$ that we reviewed at the end of \Cref{sec:springer}.
The diagonal $\bb{G}_m$-action on $\Quot^\ell(E) \times \Quot^{\ell + m}(E)$ restricts to an action on $\Quot_{\nest{m}}^\ell(E)$.
Let 
\begin{align}
	I_{\nest{m}}^\ell(E)
	= \{(\Delta, \Delta') \in I^\ell(E) \times I^{\ell + m}(E) \mid \Delta \supseteq \Delta' \supseteq \Delta + \Gamma_{E, > 0}\}.
\end{align}
The following lemma is proved in \cite[\S{3.3}]{ors} for $E = \RRR$, and the proof for any other $E\subseteq K$ is analogous.

\begin{lem}\label{lem:paving-nest}
	The $\bb{G}_m$-action on $\Quot_{\nest{m}}^\ell(E)$ has isolated fixed points.
	Writing $\Gamma_{E, > 0} = \Gamma(E) \setminus \{0\}$, we have a bijection
	\begin{align}
		\begin{array}{rcl}
			I_{\nest{m}}^\ell(E)
			&\xrightarrow{\sim}
			&\Quot_{\nest{m}}^\ell(E)^{\bb{G}_m},\\
			(\Delta, \Delta')
			&\mapsto 
			&(M_\Delta, M_{\Delta'}).
		\end{array}
	\end{align}
	Moreover, $\Quot_{\nest{m}}^\ell(E)$ is partitioned by the subschemes
	\begin{align}
		\bb{A}_{\Delta, \Delta'} = \{(M, M') \in \Quot_{\nest{m}}^\ell(E) \mid \textstyle\lim_{t \to 0} (M, M') = (M_\Delta, M_{\Delta'}\},
	\end{align}
	and each $\bb{A}_{\Delta, \Delta'}$ forms an affine space.
\end{lem}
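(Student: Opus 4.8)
The plan is to adapt the argument of \cite[\S{3.3}]{ors}, which treats $E = \RRR$, to an arbitrary finitely-generated $\bb{C}^\times$-stable submodule $E \subseteq K$. First I would record that $\Quot_{\nest{m}}^\ell(E)$ is projective: it is a closed subscheme of $\Quot^\ell(E) \times \Quot^{\ell + m}(E)$, and each punctual Quot scheme becomes, after truncating modulo a sufficiently high power of $\fr{m}_\RRR$, a closed subscheme of a Grassmannian. Properness then guarantees via the valuative criterion that $\lim_{t \to 0} (t \cdot (M, M'))$ exists for every $\bb{C}$-point and is $\bb{G}_m$-fixed, so the attracting sets $\bb{A}_{\Delta, \Delta'}$ genuinely cover $\Quot_{\nest{m}}^\ell(E)$.

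Next I would classify the fixed points. A fixed point of the diagonal action is a pair $(M, M')$ each of whose entries is $\bb{G}_m$-fixed, hence of the form $(M_\Delta, M_{\Delta'})$ with $\Delta \in I^\ell(E)$ and $\Delta' \in I^{\ell + m}(E)$ by \Cref{lem:paving}. Since the $M_\Delta$ are monomial in $\varpi$, the containment $M' \subseteq M$ is equivalent to $\Delta' \subseteq \Delta$, and --- using $x = \varpi^n$, $y = \varpi^d$ and the fact that $\varpi^n M_\Delta + \varpi^d M_\Delta$ is again monomial with value set $\Delta + \Gamma_{\RRR, > 0}$ --- the containment $xM + yM \subseteq M'$ is equivalent to $\Delta + \Gamma_{E, > 0} \subseteq \Delta'$. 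This produces the stated bijection with $I_{\nest{m}}^\ell(E)$; since $I_{\nest{m}}^\ell(E)$ is finite and the fixed locus is reduced, the fixed points are isolated.

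For the cell structure I would run Piontkowski's syzygy argument \cite[\S{3}]{piontkowski} in the nested setting. A point of $\bb{A}_{\Delta, \Delta'}$ is obtained by perturbing a minimal monomial generating set of $(M_\Delta, M_{\Delta'})$: one replaces each generating monomial $\varpi^k$ of $\Delta$ (resp.\ $\Delta'$) by $\varpi^k + \sum_{k + j \notin \Delta} c_{k, j} \varpi^{k + j}$ (resp.\ with coefficients $c'_{k, j}$), where positivity of the $\bb{G}_m$-weights forces $j > 0$. The module axioms for $M$ and $M'$ together with the nesting $xM + yM \subseteq M' \subseteq M$ impose polynomial relations among the $c_{k, j}, c'_{k, j}$; the heart of the argument is that these relations are triangular with respect to $\varpi$-degree, so one can solve for a distinguished subset of coordinates in terms of the rest, exhibiting $\bb{A}_{\Delta, \Delta'}$ as an affine space whose dimension is the number of surviving free coordinates. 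The only new bookkeeping relative to \emph{loc.\ cit.} is the interaction between the perturbations of $M$ and of $M'$, which is controlled by $\Delta' \subseteq \Delta$ and introduces no new obstructions.

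The main obstacle is this last step: making the triangularity of the syzygies precise and checking that the constraint $xM + yM \subseteq M' \subseteq M$ creates no unsolvable relations --- in effect, that every obstruction to extending a partial perturbation lands in a gap position where a free coordinate is available, exactly as for $E = \RRR$. Everything else (properness, the fixed-point count, the comparison with \Cref{lem:paving}) is formal. If one wished to avoid re-deriving the combinatorics, an alternative is to observe that $\bb{A}_{\Delta, \Delta'}$ is the attracting set of a $\bb{G}_m$-action with a unique fixed point, hence an affine scheme that embeds $\bb{G}_m$-equivariantly and closedly into the positive-weight part of its Zariski tangent space; one then checks surjectivity of this embedding by matching its source dimension, computed from the Bia{\l}ynicki--Birula weight decomposition, against $\dim_{(M_\Delta, M_{\Delta'})} \Quot_{\nest{m}}^\ell(E)$, which reduces the claim to a local-smoothness statement at the fixed points.
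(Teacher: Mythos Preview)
Your proposal is correct and follows exactly the approach the paper takes: the paper's ``proof'' is the single sentence that the lemma is proved in \cite[\S{3.3}]{ors} for $E = \RRR$ and that the proof for general $E$ is analogous. Your write-up is a faithful (and considerably more detailed) expansion of what that analogous argument looks like---projectivity for existence of limits, the fixed-point classification via \Cref{lem:paving}, and the Piontkowski-style triangular syzygy computation for the attracting cells.

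Two minor remarks. First, there is a slip where you compute the value set of $xM_\Delta + yM_\Delta$ as $\Delta + \Gamma_{\RRR, > 0}$ (correct) but then state the resulting condition as $\Delta + \Gamma_{E, > 0} \subseteq \Delta'$; the latter matches the paper's displayed notation, but the two sets agree only when $E = \RRR$, so you should be consistent. Second, your proposed alternative via Bia\l{}ynicki--Birula genuinely requires smoothness of the nested Quot scheme at the fixed points, which is not available in general (nested Hilbert/Quot schemes of singular curves are typically singular), so that route would not close without substantial extra work; your main syzygy argument is the right one.
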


\subsection{}

Given $\Delta \in I(E)$, let 
\begin{align}
	\sf{dim}_\Delta
	&= \dim(\bb{A}_\Delta),\\
	\xi_n(\Delta, k)
	&= \{j \in \Gen_n(\Delta) \mid k - d < j < k\}
	&&\text{for all $k \in \Gen(\Delta)$},\\
	\Pi_{\Delta}^\Gen(\sf{a}, \sf{t})
	&= \prod_{k \in \Gen(\Delta)}
	{(1 + \sf{a}\sf{t}^{|\xi_n(\Delta, k)|})}.
\end{align}
For $E = \RRR$, the following proposition is \cite[Cor.\@ A.5]{ors}.
To translate into the notation of \cite[\S{A}.1]{ors}, note that our $\sf{a}, \sf{t}$ correspond to their $a^2 t, t$, and hence, our $|\xi_n(\Delta, k)|$ corresponds to their $\beta_k(\Delta) - 1$.
In the proof below, we merely list the changes needed to extend the proof to any $E \in \CptPic(\bb{C})$.

\begin{prop}\label{prop:gen}
	Let $\RRR = \bb{C}[\![\varpi^n, \varpi^d]\!]$ for coprime $n, d > 0$.
	For any finitely-generated $\RRR$-submodule $E \subseteq \bb{C}(\!(\varpi)\!)$ fixed by the $\bb{C}^\times$-action rescaling $\varpi$, we have
	\begin{align}
		\Psi(\sf{a}, \cal{F}\sf{Quot}_E(\sf{q}, \sf{t}))
		&= \sum_\ell \sf{q}^\ell
		\sum_{\Delta \in I^\ell(E)}
		\sf{t}^{2\sf{dim}_\Delta} \Pi_{\Delta}^\Gen(\sf{a}, \sf{t}^2)
	\end{align}
	in the notation of \Cref{sec:springer}.
\end{prop}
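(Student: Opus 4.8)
The plan is to reduce \Cref{prop:gen} for general $E$ to the already-known case $E = \RRR$ of \cite[Cor.\@ A.5]{ors}, by checking that every ingredient in that proof is ``local'' enough to survive the replacement of $\Gamma(\RRR)$ by the $\Gamma(\RRR)$-module $\Gamma(E)$. Concretely, combining \Cref{cor:nest} with \Cref{lem:paving-nest}, we have
\begin{align}
	\Psi(\sf{a}, \cal{F}\sf{Quot}_E(\sf{q}, \sf{t}))
	= \sum_{\ell, m} \sf{q}^\ell \sf{a}^m \sf{t}^{m(m-1)} \chi(\Quot_{\nest{m}}^\ell(E), \sf{t})
	= \sum_{\ell, m} \sf{q}^\ell \sf{a}^m \sf{t}^{m(m-1)} \sum_{(\Delta, \Delta') \in I_{\nest{m}}^\ell(E)} \sf{t}^{2 \dim \bb{A}_{\Delta, \Delta'}},
\end{align}
since each $\bb{A}_{\Delta, \Delta'}$ is an affine space and hence contributes $\sf{t}^{2 \dim}$ to the virtual weight polynomial, and the cells partition $\Quot_{\nest{m}}^\ell(E)$. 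So the proposition becomes the purely combinatorial identity asserting that, for each fixed $\Delta \in I^\ell(E)$,
\begin{align}
	\sum_m \sf{a}^m \sf{t}^{m(m-1)} \sum_{\substack{\Delta' \,:\, (\Delta, \Delta') \in I_{\nest{m}}^\ell(E)}} \sf{t}^{2(\dim \bb{A}_{\Delta, \Delta'} - \dim \bb{A}_\Delta)}
	= \Pi_\Delta^\Gen(\sf{a}, \sf{t}^2).
\end{align}

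First I would recall from \cite[\S{A}.1]{ors}, or re-derive by the dimension formulas in \cite[\S{3}]{piontkowski}, the explicit combinatorial description of $\dim \bb{A}_\Delta$ and $\dim \bb{A}_{\Delta, \Delta'}$ in terms of ``syzygy-counting'' over the semigroup $\Gamma(\RRR) = n\bb{Z}_{\geq 0} + d\bb{Z}_{\geq 0}$. The key point is that these formulas count, for each $k \in \Delta$, the number of gaps in a certain interval determined by $k$, $n$, and $d$ — data that depends only on the semigroup $\Gamma(\RRR)$ and on $\Delta$ as a subset of $\bb{Z}$, not on the ambient $\Gamma(E)$. Likewise, the containment condition $\Delta \supseteq \Delta' \supseteq \Delta + \Gamma_{E, >0}$ cutting out $I_{\nest{m}}^\ell(E)$ only refers to $\Gamma(E)$ through the translation set $\Gamma_{E, >0}$, and since $n, d \in \Gamma_{E, >0}$ always, the admissible $\Delta'$ are governed by removing a ``staircase'' antichain from $\Delta$ exactly as in the $E = \RRR$ case. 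I would then verify that the generators $\Gen(\Delta)$ and the counts $\xi_n(\Delta, k)$ are also intrinsic to $\Delta$ and $\Gamma(\RRR)$. Granting these, the bijective/telescoping argument of \cite{ors} — which matches the choice of $\Delta'$ with a subset of $\Gen(\Delta)$ weighted by $\sf{t}^{2|\xi_n(\Delta, k)|}$ — goes through verbatim, since at no point does it use that the gap set below $\min \Delta$ is the semigroup of a plane curve rather than an arbitrary $\Gamma(\RRR)$-module.

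The main obstacle I anticipate is bookkeeping rather than conceptual: the proof in \cite{ors} is written for $E = \RRR$, where $\Gamma(E) = \Gamma(\RRR)$ is itself a semigroup, so one must carefully audit each step to confirm that ``$\Gamma(E)$ is only a module'' never secretly enters — in particular that the affine-cell dimension formula does not implicitly use $0 \in \Delta$ or closure of $\Gamma(E)$ under addition, and that the generating-function manipulation of the $\Pi_\Delta^\Gen$ factors respects the possibly-shifted indexing set $\Gamma(E)$. A secondary subtlety is that the $\bb{G}_m$-action and the Bia{\l}ynicki--Birula-type paving (\Cref{lem:paving}, \Cref{lem:paving-nest}) must genuinely produce affine cells even when $\Quot^\ell(E)$ is singular; this is asserted via the analogy with \cite[\S{3}]{piontkowski}, so I would make sure the cited analogue covers the module case and, if needed, spell out the additivity of virtual weight polynomials over the resulting stratification. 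Once these audits are complete, the statement follows by direct substitution, and the proof reduces to the sentence: ``the argument of \cite[Cor.\@ A.5]{ors} depends only on $\Gamma(\RRR)$ and $\Delta$, hence applies to any $\bb{C}^\times$-fixed $E$.''
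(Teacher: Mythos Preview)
Your proposal is correct and follows essentially the same route as the paper: reduce via \Cref{cor:nest} and \Cref{lem:paving-nest} to a combinatorial identity for each fixed $\Delta$, then observe that the arguments of \cite[\S{A}]{ors} carry over verbatim to general $E$. One small imprecision: the individual dimension formulas $\dim \bb{A}_\Delta$ and $\dim \bb{A}_{\Delta,\Delta'}$ \emph{do} depend on $\Gamma(E)$, not just on $\Gamma(\RRR)$ and $\Delta$ --- the paper's formulas \eqref{eq:dim-delta}--\eqref{eq:dim-delta-nest} involve $|\Gamma(E)_{>k} \setminus \Delta|$ --- but the \emph{difference} $\dim \bb{A}_{\Delta,\Delta'} - \dim \bb{A}_\Delta$ (which is all your displayed identity needs) reduces to $|\Delta_{>k} \setminus \Delta'|$ and is indeed intrinsic to $\Delta, \Delta'$, so your conclusion stands.
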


\begin{proof}
	By \Cref{cor:nest}, it suffices to show that for all $\ell \geq 0$, we have
	\begin{align}\label{eq:quot-to-gen}
		\sum_{\Delta \in I^\ell(E)}
		\sf{t}^{2\sf{dim}_\Delta} \Pi_{\Delta}^\Gen(\sf{a}, \sf{t}^2)
		=
		\sum_{0 \leq m \leq n}
		\sf{a}^m \sf{t}^{m(m - 1)}
		\chi(\Quot_{\nest{m}}^\ell(E), \sf{t}).
	\end{align}
	Theorems 13 and 14 of \cite{ors} give formulas for $\sf{dim}_\Delta$ and $\dim_{\Delta, \Delta'} \vcentcolon= \dim(\bb{A}_{\Delta, \Delta'})$ in the $E = \RRR$ case.
	For general $E$, analogous proofs give the formulas
	\begin{align}\label{eq:dim-delta}
		\sf{dim}_\Delta
		&= \sum_i {(|\Gamma(E)_{> \gamma_i} \setminus \Delta| - |\Gamma(E)_{> \sigma_i} \setminus \Delta|)},\\
		\label{eq:dim-delta-nest}
		\dim_{\Delta, \Delta'}
		&=
		\sum_{\substack{i \\ \gamma_i \notin \Delta'}}
		|\Gamma(E)_{> \gamma_i} \setminus \Delta|
		+ \sum_{\substack{i \\ \gamma_i \in \Delta'}}
		|\Gamma(E)_{> \gamma_i} \setminus \Delta'|
		- \sum_i
		|\Gamma(E)_{> \sigma_i} \setminus \Delta'|
	\end{align}
	for any $\Delta \in I^\ell(E)$ with generators $\gamma_1, \ldots \gamma_r$, syzygies $\sigma_1, \ldots, \sigma_r$, and subset $\Delta' \in I^{\ell + m}(E)$ such that $(\Delta, \Delta') \in I_{\nest{m}}^\ell(E)$, where $\Gamma(E)_{> k} = \Gamma(E) \cap \bb{Z}_{> k}$.
	
	Next, Lemma A.4 of \cite{ors} shows that in the $E = \RRR$ case, if $k \in \Gen(\Delta)$, then
	\begin{align}\label{eq:beta-gen-syz}
		|\xi_n(\Delta, k)| = |\{i \mid \gamma_i < k\}| - |\{i \mid \sigma_i < k\}|,
	\end{align}
	with the same notation for generators and syzygies as before.
	Then Lemma A.1 and Theorem A.2 of \cite{ors} show that for $E = \RRR$ and any fixed $\Delta$, formulas \eqref{eq:dim-delta}, \eqref{eq:dim-delta-nest}, and \eqref{eq:beta-gen-syz} together imply that 
	\begin{align}
		\sf{t}^{2\sf{dim}_\Delta}
		\Pi_{\Delta}^\Gen(\sf{a}, \sf{t}^2)
		=
		\sum_{0 \leq m \leq n}
		\sf{a}^m \sf{t}^{m(m - 1)}
		\sum_{\Delta' \:\mid\: (\Delta, \Delta') \in I_{\nest{m}}^\ell(E)}
		\sf{t}^{2\dim_{\Delta, \Delta'}}.
	\end{align}
	The proofs of these statements for general $E$ are the same.
	By \Cref{lem:paving-nest}, summing the last identity over all $\Delta \in I^\ell(E)$ recovers \eqref{eq:quot-to-gen}.
\end{proof}

\subsection{Proof (A) of Case (1) of \Cref{thm:ors-quot}}

For any integer $j \geq 0$, we observe that $\Delta \in I^\ell(\SSS)$ implies $\Delta + j \in I^{\ell + j}(\SSS)$, while $k \in \Gen(\Delta)$, \emph{resp.}\ $\Gen_n(\Delta)$, implies $k + j \in \Gen(\Delta + j)$, \emph{resp.}\ $\Gen_n(\Delta + j)$.
Consequently:
\begin{align}
	\dim_{\Delta + j} 
	&= \sf{dim}_\Delta,\\
	\xi_n(\Delta + j, k + j) 
	&= \xi_n(\Delta, k) + j
	&&\text{for all $k \in \Gen(\Delta)$},\\
	\Pi_{\Delta + j}^\Gen 
	&= \Pi_{\Delta}^\Gen.
\end{align}
Now consider this combinatorial version of the domain $\cal{D}$ in \Cref{sec:quot}:
\begin{align}
	D_{n, d} = \{\Delta \in I(\SSS) \mid \min(\Delta) = 0\}.
\end{align}
By the observations above, the formula for $\Psi(\cal{F}\sf{Quot}(\sf{q}, \sf{t}), \sf{a})$ in \Cref{prop:gen} equals
\begin{align}\label{eq:i-to-d}
	\frac{1}{1 - \sf{q}}
	\sum_{\Delta \in D_{n, d}}
	\sf{q}^{|\bb{Z}_{\geq 0} \setminus \Delta|} 
	\sf{t}^{2\sf{dim}_\Delta}
	\Pi_{\Delta}^\Gen(\sf{a}, \sf{t}^2).
\end{align}
It remains to match this formula with the formula for $\bar{\sf{X}}_{n, d}(\sf{a}, \sf{q}, \sf{t}^2)$ for coprime $n, d$ conjectured in \cite{gn} and proved in \cite{mellit_22}.
It will be convenient to replace $\sf{t}$ with $\sf{t}^{\frac{1}{2}}$ everywhere in what follows.

In \cite{gm}, Gorsky--Mazin gave a bijection from $D_{n, d}$ to the set of $n \times d$ rational Dyck paths, under which $|\bb{Z}_{\geq 0} \setminus \Delta|$ and $\sf{dim}_\Delta$ correspond to the statistics on Dyck paths respectively denoted $\sf{area}$ and $\sf{codinv}$ in \cite{gmv}.
Explicitly, form the semi-infinite grid of unit squares in the $x, y$-plane whose vertices are the lattice points with $0 \leq x \leq d$ and $y \geq 0$.
Label the bottom left square, closest to the origin, with
\begin{wrapfigure}{r}{0.25\textwidth}
	\centering
	\includegraphics[width=0.1875\textwidth]{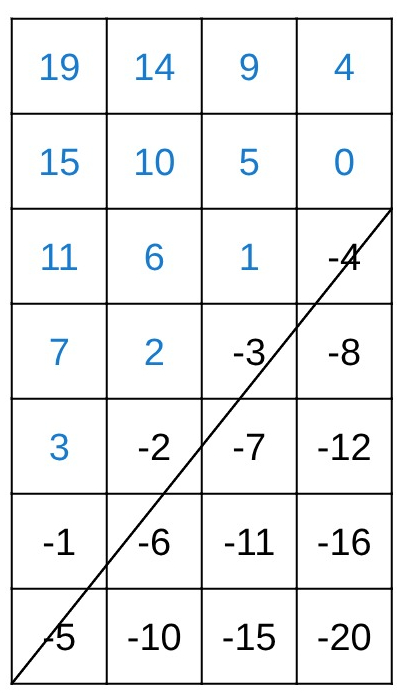}
\end{wrapfigure}
the integer $-d$; label the other squares with integers that \emph{decrease} by $d$ as we go across rows, and \emph{increase} by $n$ as we go up columns.
For instance, the grid for $(n, d) = (4, 5)$ is shown to the right, with nonnegative labels in blue.
For any $\Delta \in D_{n, d}$, the boundary of the region of squares with labels in $\Delta$ must contain a lattice path $\pi(\Delta)$ from $(x, y) = (0, 0)$ to $(x, y) = (d, n)$ that stays above the line $y = \frac{d}{n}x$, since $\Delta$ contains $0$ and every element of $\Delta$ is nonnegative.
Gorsky--Mazin's bijection sends $\Delta \mapsto \pi(\Delta)$.

\begin{rem}
	In \cite{gm}, the set $D_{n, d}$ is described as indexing the fixed points of a $\bb{G}_m$-action on $\CptJac$, rather than $\cal{D}$.
	However, this indexing really factors through the decomposition $\cal{D} = \coprod_\gap \varpi^{-\gap} \CptJac(\gap)$, corresponding to the fact that the elements of $D_{n, d}$ are what Gorsky--Mazin call \dfemph{0-normalized} modules for $\Gamma(\RRR)$.
	Compare to \S\ref{subsec:cherednik}, where a similar remark applies to Cherednik's notation.
\end{rem}

Let $\pi = \pi(\Delta)$ in what follows.
Let
\begin{align}
	v_\ast(\pi) 
	&= \{(x, y) \in \pi \mid (x - 1, y), (x, y + 1) \in \pi\},\\
	v^\ast(\pi)
	&= \{(x, y) \in \pi \mid (x + 1, y), (x, y - 1) \in \pi\}.
\end{align}
Following \cite{mellit_22}, we refer to elements of $v_\ast(\pi)$, \emph{resp.}\ $v^\ast(\pi)$, as \dfemph{inner vertices}, \emph{resp.}\ \dfemph{outer vertices}, of $\pi$.
That is, inner vertices are the bottom right corners of squares whose bottom and right edges are contained in $\pi$, while outer vertices are the top left corners of squares whose top and left edges are contained in $\pi$.
(Note that outer vertices are called ``internal vertices''(!) in \cite{gn}.) 

The squares whose bottom edges are contained in $\pi$ are precisely those labelled by elements of $\Gen_n(\Delta)$.
Of these, those whose right edges are also contained in $\pi$ are those labelled by elements of $\Gen(\Delta) \setminus \{0\}$.
Hence, there is a bijection
$
\Gen(\Delta) \setminus \{0\} \xrightarrow{\sim} v_\ast(\pi) 
$
sending any generator of $\Delta$ to the bottom right corner of the square it labels. 

To illustrate the previous two paragraphs following the figure in Example \ref{ex:7x5},  $v^\ast(\pi)$ contains the bottom left vertices of the squares labeled $9,11,3,$ and $5$, whereas $v_\ast(\pi)$ contains the bottom right vertices of the squares labeled $9, 1,$ and $3$ and the above bijection is the one given by this labeling.

For an arbitrary lattice point $p$, let $l_{d/n}(p)$ be the line of slope $\frac{d}{n}$ through $p$, and let $\kappa_\pi(p)$ be the set of horizontal unit steps of $\pi$ that intersect $l_{d/n}(p)$ in their interiors.
The following lemma is inspired by ideas from \cite{gm} and \cite[\S{A}]{ors}.

\begin{lem}\label{lem:kappa}
	If $k \in \Gen(\Delta) \setminus \{0\}$ labels a square with bottom right corner $p \in v_\ast(\pi)$, then the map $\xi_n(\Delta, k) \to \kappa_\pi(p)$ that sends $k$ to the bottom edge of the square labelled $k$ is a bijection.
	Thus
	\begin{align}
		\frac{1}{1 + \sf{a}} \Pi_{\Delta}^\Gen(\sf{a}, \sf{t})
		&= \prod_{p \in v_\ast(\pi)}
		{(1 + \sf{a}\sf{t}^{|\kappa_\pi(p)|})}.
	\end{align}
\end{lem}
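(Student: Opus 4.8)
The plan is to make everything explicit in the grid coordinates and reduce the statement to the observation that $l_{d/n}(p)$ is a level line of the labeling function. Fix $\Delta \in D_{n,d}$ and write $\pi = \pi(\Delta)$. From the labeling rule (decrease by $d$ across a row, increase by $n$ up a column, bottom left square equal to $-d$) the square $[i,i+1]\times[j,j+1]$ carries the integer $L(i,j) = nj - d(i+1)$, and its bottom edge is the horizontal segment from $(i,j)$ to $(i+1,j)$. First I would identify the line attached to an inner vertex: if $k \in \Gen(\Delta)\setminus\{0\}$ labels the square $[i,i+1]\times[j,j+1]$, then its bottom right corner is $p = (i+1,j)$, and $n p_y - d p_x = nj - d(i+1) = k$, so the slope-$\frac{d}{n}$ line through $p$ is exactly the level set $l_{d/n}(p) = \{(x,y) : ny - dx = k\}$.

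Second, I would determine which bottom edges meet this line in their interiors. For the square labelled $m = L(i',j') = nj' - d(i'+1)$, its bottom edge sits at height $y = j'$ over $x \in (i', i'+1)$, and it meets $ny - dx = k$ in the single point $x = (nj'-k)/d$, which lies strictly between $i'$ and $i'+1$ precisely when $di' < nj' - k < d(i'+1)$, i.e.\ (using $nj' - di' = m + d$) precisely when $k - d < m < k$. Since a segment of nonzero slope cannot lie along $l_{d/n}(p)$, interior intersection and endpoint intersection are mutually exclusive, so there is no ambiguity in the boundary cases $m = k$ and $m = k - d$, which are correctly excluded.

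Third, I would feed this into the combinatorial dictionary already set up before the lemma. Every East step of $\pi$ is the bottom edge of the unique square lying above it, and by that dictionary these are exactly the squares labelled by the elements of $\Gen_n(\Delta)$; hence the East steps of $\pi$ are in bijection with $\Gen_n(\Delta)$ via $m \mapsto (\text{bottom edge of the square labelled } m)$, this map being injective since distinct squares have distinct bottom edges. Restricting it to those $m$ with $k - d < m < k$ and invoking the second step identifies its image with $\kappa_\pi(p)$ and its domain with $\xi_n(\Delta,k)$; this is exactly the asserted bijection $\xi_n(\Delta,k) \xrightarrow{\sim} \kappa_\pi(p)$, and taking cardinalities gives $|\xi_n(\Delta,k)| = |\kappa_\pi(p)|$.

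Finally, for the product identity I would separate the term $k = 0$ in $\Pi_\Delta^{\Gen}$: since $\Gen_n(\Delta) \subseteq \bb{Z}_{\geq 0}$ we have $\xi_n(\Delta,0) = \emptyset$, so that term contributes the scalar factor $1 + \sf{a}$, while all remaining $k$ range over $\Gen(\Delta)\setminus\{0\}$. Using the bijection $\Gen(\Delta)\setminus\{0\} \xrightarrow{\sim} v_\ast(\pi)$ recalled before the lemma, together with $|\xi_n(\Delta,k)| = |\kappa_\pi(p)|$, the remaining factors reassemble into $\prod_{p \in v_\ast(\pi)}(1 + \sf{a}\sf{t}^{|\kappa_\pi(p)|})$, which is the claimed formula. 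I expect the only genuinely delicate point to be the first two steps, namely fixing the orientation of the labeling carefully enough to recognize $l_{d/n}(p)$ as the $k$-level line and to pin down the inequality $k - d < m < k$ exactly; once that is in place, the rest is bookkeeping on top of facts already established in the text.
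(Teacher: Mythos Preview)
Your proposal is correct and follows essentially the same approach as the paper's proof. The paper's argument is a one-line observation that the line $l_{d/n}(p)$ meets the bottom edge of a square labelled $j$ precisely when $k-d<j<k$, justified by checking the case $k=0$ and translating; you arrive at the same inequality by an explicit coordinate computation of the labeling function and the level line, then finish with the same bookkeeping using the bijection between East steps and $\Gen_n(\Delta)$ already set up in the text.
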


\begin{proof}
	We observe that if $p$ is the bottom right corner of a square labelled $k$, then the line $l_{d/n}(p)$ intersects the bottom edge of a square labelled $j$ if and only if $k - d < j < k$.
	Indeed, this is easiest to see when $p = (n, d)$ and $k = 0$, and the general case follows from translating $l_{d/n}(n, d)$ onto $l_{d/n}(p)$.
\end{proof}

In our notation, the formula for $\bar{\sf{X}}_{n, d}$ for coprime $n, d$ in \cite{gn, mellit_22} is:
\begin{align}
	\bar{\sf{X}}_{n, d}(\sf{a}, \sf{q}, \sf{t})
	= 
	\frac{1}{1 - \sf{q}}
	\sum_{\substack{n \times d \\ \text{Dyck paths $\pi$}}}
	\sf{q}^{\sf{area}(\pi)}
	\sf{t}^{\sf{codinv}(\pi)}
	\prod_{p \in v^\ast(\pi)}
	{(1 + \sf{a}\sf{t}^{|\kappa_\pi(p)|})},
\end{align}
where, by \cite{gm}, $\sf{area}(\pi) = |\bb{Z}_{\geq 0} \setminus \Delta|$ and $\sf{codinv}(\pi) = \sf{dim}_\Delta$ whenever $\pi = \pi(\Delta)$.
See the end of \Cref{sec:conventions} for the precise matching of grading conventions.
So by \Cref{lem:kappa}, it remains to show:

\begin{lem}\label{lem:updown}
	For any $n \times d$ Dyck path $\pi$ as above,
	\begin{align}
		\prod_{p \in v_\ast(\pi)} {(1 + \sf{a}\sf{t}^{|\kappa_\pi(p)|})} = \frac{1}{1 + \sf{a}} \prod_{p \in v^\ast(\pi)} {(1 + \sf{a}\sf{t}^{|\kappa_\pi(p)|})}.
	\end{align}
\end{lem}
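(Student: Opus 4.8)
The plan is to construct an explicit bijection $\phi \colon v_\ast(\pi) \xrightarrow{\sim} v^\ast(\pi) \setminus \{p_0\}$ onto all outer vertices but one distinguished vertex $p_0$, so that $|\kappa_\pi(\phi(p))| = |\kappa_\pi(p)|$ for every $p$ while $|\kappa_\pi(p_0)| = 0$; the factor $\tfrac{1}{1 + \sf{a}}$ on the right of the asserted identity then records exactly the removal of the factor $1 + \sf{a}\sf{t}^0$ attached to $p_0$. To set up, write the step sequence of $\pi$ as $N^{b_1} E^{a_1} \cdots N^{b_k} E^{a_k}$ with all $a_i, b_i \geq 1$. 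Since $\pi$ begins with an $N$-step and ends with an $E$-step, its corners alternate between $NE$-corners and $EN$-corners, beginning and ending with an $NE$-corner; hence the outer vertices are the $NE$-corners $P_1, \dots, P_k$ and the inner vertices are the $EN$-corners $V_1, \dots, V_{k-1}$, so that $|v^\ast(\pi)| = |v_\ast(\pi)| + 1$.

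The second step is to linearize $\kappa_\pi$. Put $w(x,y) = dy - nx$, which is constant on any line parallel to the segment from $(0,0)$ to $(d,n)$; the lines $l_{d/n}(p)$ are such lines, and the interior of a horizontal step from $(x,y)$ to $(x+1,y)$ meets $l_{d/n}(p)$ exactly when $w(p) < w(x,y) < w(p) + n$. Letting $W \subseteq \bb{Z}$ be the set of the $d$ values $w(x,y)$ carried by the horizontal steps of $\pi$ — these are distinct since their heights lie in $\{1, \dots, n\}$ and $\gcd(n,d)=1$ — we get $|\kappa_\pi(p)| = h_\pi(w(p))$ where $h_\pi(c) \vcentcolon= |W \cap (c, c+n)|$. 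The run structure of $\pi$ shows that inside each residue class modulo $n$ the elements of $W$ form an unbroken arithmetic progression of common difference $n$; there are exactly $k$ occurring classes (the heights $B_i = b_1 + \cdots + b_i$ are distinct modulo $n$, lying in $\{1,\dots,n\}$), their top elements are precisely $w(P_1), \dots, w(P_k)$, and for each $i < k$ the value $w(V_i)$ sits $n$ below the bottom element of the class of the $i$-th $E$-run. Introducing a phantom inner vertex $V_k$ at the endpoint $(d,n)$ with $w(V_k) = 0$, the values $w(V_1), \dots, w(V_k)$ are then exactly the $k$ integers lying $n$ below the bottoms of the $k$ classes. Because $\pi$ lies weakly above the segment, $w \geq 0$ on $\pi$, forcing $\min W \geq n$ and hence $h_\pi(0) = 0$; thus $|\kappa_\pi(V_k)| = 0$, and the Lemma is reduced to the multiset identity
\begin{align}
\{\, h_\pi(w(P_1)), \dots, h_\pi(w(P_k)) \,\} = \{\, h_\pi(w(V_1)), \dots, h_\pi(w(V_k)) \,\}.
\end{align}

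The final step is to prove this identity, for instance by matching the class of residue $r$ with the class of residue $(\max W) - r \bmod n$. The integer $\max W$ is attained at the outer vertex $P_{i^\ast}$ of largest $w$-value, and the geometric input — where the hypotheses that $\pi$ stays above the segment and that the $B_i$ increase to $n$ are used — is that this involution of $\bb{Z}/n$ permutes the $k$ occurring residue classes; granting that, the ``unbroken progression'' shape of $W$ lets one directly compare the two length-$n$ windows, one at the top of a class and one $n$ below the bottom of its partner class, and build a bijection between them, giving $h_\pi(w(P_i)) = h_\pi(w(V_{\sigma(i)}))$. I expect this last comparison to be the main obstacle, precisely because $h_\pi$ at a vertex depends on the whole of $\pi$ rather than a neighbourhood, so both the permutation statement and the window bijection must be established globally.

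An alternative route is to prove the boxed identity by induction on the number of lattice cells lying between $\pi$ and the segment, removing one such cell at a well-chosen inner vertex and tracking how every $h_\pi(w(p))$ changes as the path is deformed; I would expect the bookkeeping there to be comparably delicate, again owing to the global nature of $h_\pi$. In either form the content is purely about rational Dyck paths, and — via the Gorsky--Mazin dictionary of \cite{gm} relating $\pi$ to $\Delta$ — it reconciles the inner-vertex presentation of $\Psi(\sf{a}, \cal{F}\sf{Quot}_{n,d})$ obtained from \Cref{cor:nest} and \Cref{prop:gen} with the outer-vertex presentation of $\bar{\sf{X}}_{n,d}$ appearing in diagram \eqref{eq:bridges}.
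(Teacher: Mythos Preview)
Your reduction is clean and correct: the translation of $|\kappa_\pi(p)|$ into the window count $h_\pi(w(p))$, the identification of the $k$ outer-vertex $w$-values as the tops of the $k$ residue-class progressions in $W$, and the identification of the inner-vertex $w$-values (plus the phantom at $(d,n)$) as the points $n$ below the bottoms, are all right. The statement is thereby reduced to the multiset equality you box. But you then stop: both the residue-involution route and the inductive route are left as sketches, and you explicitly flag the comparison as ``the main obstacle.'' That is a genuine gap --- the lemma is not proved.

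The paper closes this gap by a much shorter device than either of your proposals. Rather than pairing residue classes, list \emph{all} inner and outer vertices together as $p_0, p_1, \ldots, p_m$ in order of decreasing distance from the diagonal (equivalently, decreasing $w$). One checks that $p_0$ is outer, $p_1$ is outer, and $p_m$ is inner. Writing $o_i = 1$ if $p_i$ is outer and $0$ if inner, set $\epsilon_i = o_{i-1} + o_i - 1 \in \{-1,0,1\}$ and $\tau_i = \sum_{j \le i} \epsilon_j$. A direct count gives $\tau_i = |\kappa_\pi(p_i)|$: indeed, your own formula $h_\pi(c) = |\{r : b_r - n < c < t_r\}|$ shows that $|\kappa_\pi(p_i)|$ equals (outer vertices with $w > w(p_i)$) minus (inner vertices with $w \ge w(p_i)$), which unwinds to $\tau_i$. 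Now introduce the auxiliary walk $\sigma_i = 2|\{j < i : o_j = 1\}| - i$, which has steps $\pm 1$; one has $\tau_i = \sigma_i$ when $p_i$ is outer and $\tau_i = \sigma_i - 1$ when $p_i$ is inner, and the step $\sigma_i \to \sigma_{i+1}$ is $+1$ or $-1$ according to whether $p_i$ is outer or inner. Since this walk starts and ends at the same height, for every level $v$ the number of up-crossings $v \to v+1$ equals the number of down-crossings $v+1 \to v$. Translating back, this says precisely that each value is hit equally often by $\tau_i$ with $p_i$ outer (excluding $p_0$) as with $p_i$ inner. This is the multiset identity you wanted, obtained without any residue-class matching or induction.
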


\begin{proof}
	Since $d$ and $n$ are coprime, no two elements of $v_\ast(\pi) \cup v^\ast(\pi)$ have the same perpendicular distance to the line $l \vcentcolon= l_{d/n}(n, d)$.
	The one farthest from $l$ must belong to $v^\ast(\pi)$.
	Let $p_0$ be this element, and let $p_1, p_2, \ldots, p_m$ be the remaining elements ordered by decreasing distance from $l$.
	For $1 \leq i \leq m$, let
	\begin{align}
		\epsilon_i = 
		\left\{\begin{array}{r@{}ll}
			-&1	&(p_{i - 1}, p_i) \in v_\ast(\pi) \times v_\ast(\pi),\\
			&0	&(p_{i - 1}, p_i) \in (v_\ast(\pi) \times v^\ast(\pi)) \cup (v^\ast(\pi) \times v_\ast(\pi)),\\
			&1	&(p_{i - 1}, p_i) \in v^\ast(\pi) \times v^\ast(\pi).
		\end{array}\right.
	\end{align}
	Let $\tau_i = \sum_{j \leq i} \epsilon_i$.
	Then for all $i$, we have $\tau_i = |\kappa_\pi(p_i)| \geq 0$.
	
	If $m = 0$, then we are done; else, we must have $\tau_1 = \tau_m = 1$.
	It follows that every value attained by the sequence $\tau_1, \ldots, \tau_m$ must occur as many times for indices $i$ with $p_i \in v_\ast(\pi)$ as for indices $i$ with $p_i \in v^\ast(\pi) \setminus \{p_0\}$.
\end{proof}

\begin{ex}
	\label{ex:7x5}
	The figure below shows a $7 \times 5$ Dyck path $\pi$ for which $|v_\ast(\pi)| = 3$ and $|v^\ast(\pi)| = 4$.
	The corresponding $\Delta \in D_{7, 5}$ yields $\Gen_7(\Delta) = \{0, 5, 3, 1, 6, 11, 9\}$ and $\Gen(\Delta) = \{0, 3, 1, 9\}$.
	\begin{equation}
		\centering
		\includegraphics[width=0.375\textwidth]{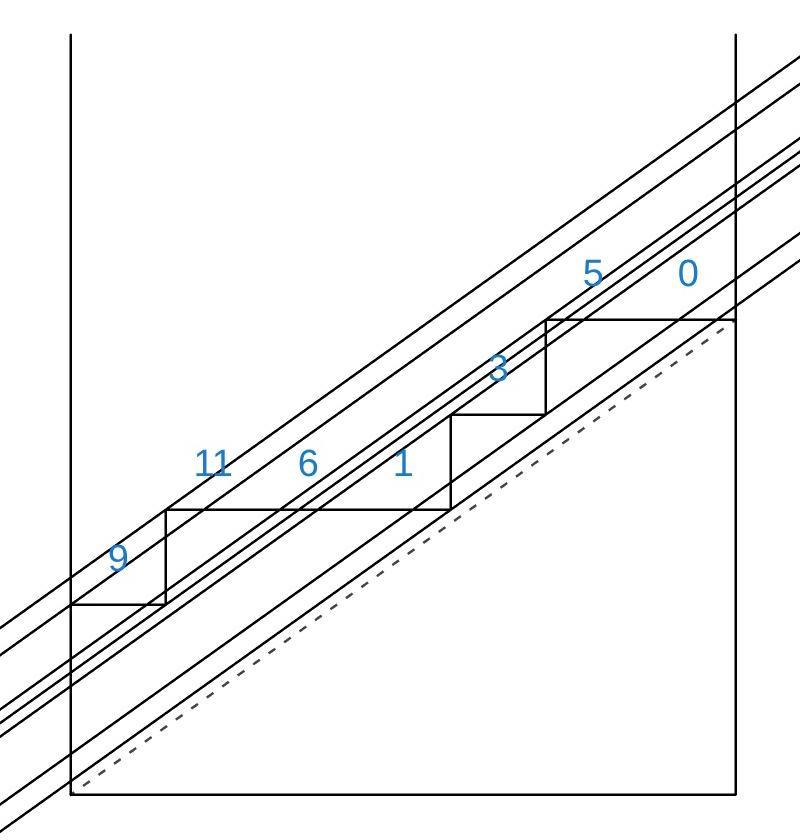}
	\end{equation}
	In the notation of \Cref{lem:updown}, $(\epsilon_i)_i = (1, 1, 0, 0, 0, -1)$ and $(\tau_i)_i = (1, 2, 2, 2, 2, 1)$.
\end{ex}

\begin{rem}
	\Cref{lem:updown} refines the last display on \cite[60]{mellit_22}, which merely asserts that $\sum_{p \in v_\ast(\pi)} {|\kappa_\pi(p)|} = \sum_{p \in v^\ast(\pi)} {|\kappa_\pi(p)|}$.
\end{rem}

\subsection{Proof (B) of Case (1) of \Cref{thm:ors-quot}}\label{subsec:proof-b}

We will explain each arrow in the left-hand portion of diagram \eqref{eq:bridges}.

First, we invoke \Cref{thm:main} to pass from $\cal{F}\sf{Quot}_{n,d}$ to $\cal{F}\sf{Pic}_{n, d}$.

Next, we explain the arrow labeled \cite{hikita}.
In \Cref{prop:asf-coprime}, we recall
the explicit isomorphism between $\CptPic_\nu$ and a parabolic affine Springer fiber for $\GL_n$ studied by Hikita \cite{hikita}, to be denoted $\hat{\cal{B}}_\nu^{\psi(d)}$. 
We refer to \Cref{sec:asf} for the notation.
Note that Hikita worked with $\SL_n$, not $\GL_n$, but we account for this difference by passing to $\CptPic^0 \simeq \CptPic/\Lattice$:
See part (3) of the proposition.

Both $\CptPic_\nu$ and $\hat{\cal{B}}_\nu^{\psi(d)}$ admit affine pavings analogous to those in \Crefrange{lem:paving}{lem:paving-nest}.
Hikita introduced an increasing filtration of $\hat{\cal{B}}_\nu^{\psi(d)}$ by unions of paving strata, which we review in \S\ref{subsec:hikita}.
Let $\sf{F}_{\leq \ast}$ be the induced filtration on the Borel--Moore homology.
Theorem 4.17 of \cite{hikita} matches the bigraded Frobenius characteristic of $\gr_\ast^\sf{F} \ur{H}_\ast^\BM(\hat{\cal{B}}_{(1^n)}^{\psi(d)})$ with a $q, t$-symmetric function defined combinatorially using labeled $n \times d$ rational Dyck paths.
This symmetric function is now known as the \dfemph{Hikita polynomial} for $(n, d)$.
It was independently introduced by Armstrong at the 2012 AMS Joint Mathematics Meetings \cite{armstrong}.

At the same time, there is a filtration of $\CptPic((1^n))/\Lattice$ by unions of the subvarieties $\CptPic(\gap)((1^n))/\Lattice$, which we review in \S\ref{subsec:gap}.
\Cref{thm:gap-vs-hikita} says that it corresponds to Hikita's filtration on $\hat{\cal{B}}_\nu^{\psi(d)}$, after postcomposing with an involution $\iota$. 
\Cref{lem:involution}(2) says that on Borel--Moore homology, $\iota$ is Springer-equivariant and preserves weights.
Due to the affine paving, the weight filtration matches the homological one.
We deduce that the Hikita polynomial for $(n, d)$ is unchanged by $\iota$, and matches 
$\cal{F}\sf{Pic}_{n, d}$ once we invoke the duality between Borel--Moore homology and compactly-supported cohomology.
Hikita's variables $t, q$ correspond to our variables $\sf{q}, \sf{t}^2$. 

To explain the arrow to ``EHA'' in the top left of \eqref{eq:bridges}:
The rational shuffle theorem for coprime $n$ and $d$, formulated by Gorsky--Negu\c{t} in \cite{gn} and proved by Mellit in \cite{mellit_21}, matches the Hikita polynomial with an expression denoted $\bb{Q}_{d, n} \cdot (-1)^n$ in \cite{bglx}.
Here, $\bb{Q}_{d, n}$ is an element of the elliptic Hall algebra (EHA), and $(-1)^n$ is a vector in the Fock-space representation of the EHA on symmetric functions.

To explain the last two arrows needed to arrive at $\bar{\sf{X}}_{n, d}(\sf{a}, \sf{q}, \sf{t}^2)$:
Mellit's proof implicitly yields a recursive formula for $\bb{Q}_{d, n} \cdot (-1)^n$, and hence $\cal{F}\sf{Pic}_{n, d}$, in terms of the Dyck-path operators from his prior work with Carlsson \cite{cm}.
This recursion is stated explicitly in \cite[Thms.\@ 2--3]{wilson}.
At the same time, in \cite{hm}, Hogancamp--Mellit establish a recursive formula for the KhR homology of the positive $(n, d)$ torus link, for arbitrary $n, d$.
As explained in \Cref{rem:cogen}, below, this yields a closed form for $\bar{\sf{X}}_{n, d}(\sf{a}, \sf{q}, \sf{t}^2)$ that we denote by ``Cogen'' in \eqref{eq:bridges}.
In \cite[Cor.\@ 1]{wilson}, Wilson shows that for $n, d$ coprime, Mellit's recursion for $\bb{Q}_{d, n} \cdot(-1)^n$ specializes under $\Psi$ to Hogancamp--Mellit's recursion for $\bar{\sf{X}}_{n, d}(\sf{a}, \sf{q}, \sf{t}^2)$.
We note that essentially the same result appears in \cite[Cor.\@ 3.4]{mellit_22}.
This completes proof (B).

\begin{rem}\label{rem:cogen}
	The closed form for $\bar{\sf{X}}_{n, d}(\sf{a}, \sf{q}, \sf{t}^2)$ resulting from \cite{hm} is due to Gorsky--Mazin--Vazirani \cite{gmv}.
	It is labeled ``Cogen'' in diagram \eqref{eq:bridges} because it uses the same set of semigroup modules $D_{n, d}$ as in proof (A), but replaces $\Pi_{\Delta}^\Gen(\sf{a}, \sf{t})$ with
	$\Pi_{\Delta}^\Cogen(\sf{a}\sf{q}^{-1}, \sf{t})$, where
	\begin{align}
		\Pi_{\Delta}^\Cogen(\sf{b}, \sf{t})
		= \prod_{k \in \Cogen(\Delta)}
		{(1 + \sf{b}\sf{t}^{\lambda(\Delta, k)})},
	\end{align}
	where the product runs over the set of \dfemph{(nonnegative) cogenerators}
	\begin{align}
		\Cogen(\Delta) = \{k \in \bb{Z}_{\geq 0} \setminus \Delta \mid k + n, k + d \in \Delta\},
	\end{align}
	and for any $k \in \Cogen(\Delta)$, we set
	\begin{align}
		\lambda(\Delta, k) 
		&= |\{j \in \Gen_n(\Delta) \mid k + n  + 1 \leq j \leq k + n + d\}|\\
		&= |\{j \in \Gen_n(\Delta) \mid k + n < j < k + n + d\}|.
	\end{align}
\end{rem}

\begin{rem}
	It is natural to ask how much of diagram \eqref{eq:bridges} generalizes to integers $n, d$ that are not coprime.
	We will address this question in a sequel paper.
	In \Cref{sec:noncoprime}, where we address the $d = nk$ case, our proof does \emph{not} involve generalizing \eqref{eq:bridges}.
	For now, we mention that:
	\begin{itemize}
		\item 	The rational shuffle conjecture was generalized to arbitrary $n, d > 0$ in \cite{bglx}.
		This is the actual result proved by Mellit in \cite{mellit_21}.
		
		\item 	\Cref{thm:main} and the Cogen formula for $\bar{\sf{X}}_{n, d}$ extend to arbitrary $n, d$.
		
		\item 	In \cite{wilson}, Wilson introduces generalizations of $\bb{Q}_{d, n} \cdot (-1)^n$ and the Hikita polynomial to arbitrary $n, d$, which differ from those in \cite{bglx}.
		He has nonetheless shown that his Hikita polynomial specializes to the Cogen formula in (ii), and hence, to $\bar{\sf{X}}_{n, d}$.
		
	\end{itemize}
\end{rem}

\subsection{Gen versus Cogen}\label{subsec:gen-vs-cogen}

This subsection is a digression on \Cref{rem:cogen}.
As mentioned, the identity matching the Gen and Cogen formulas is
\begin{align}\label{eq:gen-vs-cogen}
	\frac{1}{1 + \sf{a}} \sum_{\Delta \in D_{n, d}}
	\sf{q}^{|\bb{Z}_{\geq 0} \setminus \Delta|} 
	\sf{t}^{\sf{dim}_\Delta}
	\Pi_{\Delta}^\Gen(\sf{a}, \sf{t})
	&=
	\sum_{\Delta \in D_{n, d}}
	\sf{q}^{|\bb{Z}_{\geq 0} \setminus \Delta|}
	\sf{t}^{\sf{dim}_\Delta}
	\Pi_{\Delta}^\Cogen(\sf{a}\sf{q}^{-1}, \sf{t}).
\end{align}
It is remarkable because $\Gen$ and $\Cogen$ behave very differently.
Note that at $\sf{a} \to 0$, the terms $\Pi_n^\Gen, \Pi_n^\Cogen$ disappear above, and both sides specialize to
\begin{align}\label{eq:catalan}
	\sum_{\Delta \in D_{n, d}}
	\sf{q}^{|\bb{Z}_{\geq 0} \setminus \Delta|} 
	\sf{t}^{\sf{dim}_\Delta}.
\end{align}
Similarly, our proofs of case (1) of \Cref{thm:ors-quot} simplify drastically in the $\sf{a} \to 0$ limit; almost all of their combinatorial complexity lies in the higher $\sf{a}$-degrees.

\begin{rem}\label{rem:catalan}
	Let $C_{n, d}(q, t) = C_{n, d}(t, q)$ be the $q, t$-rational Catalan number introduced in \cite{haglund_08}.
	Via their bijection from $D_{n, d}$ to the set of $n \times d$ Dyck paths, Gorsky--Mazin showed that \eqref{eq:catalan} is $\sf{t}^\delta C_{n, d}(\sf{q}, \sf{t}^{-1})$ \cite{gm}.
\end{rem}

Below, we illustrate the contrast between $\Gen$ and $\Cogen$ in an example where $d = n + 1$.
Throughout, we label the elements of $D_{n, d}$ in the form $\Delta_{a_1, \ldots, a_n}$, where $\Gen_n = \{a_1, \ldots, a_n\}$ and $a_i + \delta - |\bb{Z}_{\geq 0} \setminus \Delta| \equiv i - 1 \pmod{n}$ for all $i$, to streamline comparison to \Cref{ex:min-delta}.

\begin{ex}\label{ex:3-4}
	Take $(n, d) = (3, 4)$.
	Then $\delta = 3$ and
	\begin{align}
		D_{3, 4} = \{\Delta_{0,4,8}, \Delta_{5,0,4}, \Delta_{1,5,0}, \Delta_{4,2,0}, \Delta_{0,1,2}\}
	\end{align}
	with these statistics:
	\begin{align}
		\begin{array}{llllll}
			\Delta
			&\sf{q}^{|\bb{Z}_{\geq 0} \setminus \Delta|}
			\sf{t}^{\sf{dim}_\Delta}
			&\Gen \setminus \{0\}
			&\frac{1}{1 + \sf{a}} \Pi_{\Delta}^\Gen
			&\Cogen
			&\Pi_{\Delta}^\Cogen\\
			\hline
			\Delta_{0,4,8}
			&\sf{q}^3 \sf{t}^3
			&\emptyset
			&1
			&\{5\}
			&1 + \sf{b}\\
			\Delta_{5,0,4}
			&\sf{q}^2 \sf{t}^2
			&\{5\}
			&1 + \sf{a}\sf{t}
			&\{1, 2\}
			&(1 + \sf{b})(1 + \sf{b}\sf{t})\\
			\Delta_{1,5,0}
			&\sf{q} \sf{t}^2
			&\{1\}
			&1 + \sf{a}\sf{t}
			&\{2\}
			&1 + \sf{b}\\
			\Delta_{4,2,0}
			&\sf{q} \sf{t}
			&\{2\}
			&1 + \sf{a}\sf{t}
			&\{1\}
			&1 + \sf{b}\\
			\Delta_{0,1,2}
			&1
			&\{1, 2\}
			&(1 + \sf{a}\sf{t})(1 + \sf{a}\sf{t}^2)
			&\emptyset
			&1
		\end{array}
	\end{align}
	Here, \eqref{eq:gen-vs-cogen} becomes
	\begin{align}
		&\sf{q}^3\sf{t}^3 + (\sf{q}^2\sf{t}^2 + \sf{q}\sf{t}^2 + \sf{q}\sf{t})(1 + \sf{a}\sf{t}) + 1(1 + \sf{a}\sf{t})(1 + \sf{a}\sf{t}^2)\\
		&\qquad= (\sf{q}^3\sf{t}^3 + \sf{q}\sf{t}^2 + \sf{q}\sf{t})(1 + \sf{a}\sf{q}^{-1}) + \sf{q}^2\sf{t}^2(1 + \sf{a}\sf{q}^{-1})(1 + \sf{a}\sf{q}^{-1}\sf{t}) + 1.
	\end{align}
\end{ex}

In general, one can check that there is a permutation $\sf{Row} : D_{n, d} \to D_{n, d}$ defined by $\sf{Cogen}(\sf{Row}(\Delta)) = \Gen(\Delta) \setminus \{0\}$.
Nathan Williams has pointed out to us that $\sf{Row}$ ought to be an example of rowmotion, a certain operation on the order ideals of a finite poset \cite{sw}.
To see how, regard $\bb{Z}_{\geq 0} \setminus \Gamma(\RRR)$ as a poset in which $j \leq k$ if and only if $k - j \in \Gamma(\RRR)$, and the sets $\bb{Z}_{\geq 0} \setminus \Delta$ for $\Delta \in D_{n, d}$ as its order ideals.
We would be curious to know whether rowmotion sheds any light on the relationship between the Gen and Cogen formulas.

\subsection{Proof of \Cref{thm:n-equals-3}}

We claim that $\cal{F}\sf{Hilb}_{ 3, d}(\sf{q}, \sf{t})	= \cal{F}\sf{Quot}_{3, d}(\sf{q}, \sf{q}^{\frac{1}{2}}\sf{t})$ for $d > 0$ coprime to $3$.
The first step is the asymptotic statement:

\begin{prop}\label{prop:asymptotic}
	For any integer $n > 0$, we have
	\begin{align}
		\lim_{\substack{d \to \infty \\ \text{$d$ coprime to $n$}}}
		\Psi(\sf{a}, \cal{F}\sf{Hilb}_{n, d}(\sf{q}, \sf{t}))
		&= \prod_{1 \leq k \leq n} \frac{1 + \sf{a}\sf{q}^{k - 1} \sf{t}^{2k - 2}}{1 - \sf{q}^k \sf{t}^{2k - 2}},\\
		\lim_{\substack{d \to \infty \\ \text{$d$ coprime to $n$}}}
		\Psi(\sf{a}, \cal{F}\sf{Quot}_{n, d}(\sf{q}, \sf{t}))
		&= \prod_{1 \leq k \leq n} \frac{1 + \sf{a}\sf{t}^{2k - 2}}{1 - \sf{q} \sf{t}^{2k - 2}},
	\end{align}
	where the limits are taken in $\bb{Q}[\![\sf{q}, \sf{t}]\!][\sf{a}]$.
\end{prop}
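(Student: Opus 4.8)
The plan is to reduce both limits to combinatorial generating functions by way of \Cref{prop:gen}, and then to evaluate these generating functions explicitly, exploiting the fact that the semigroup combinatorics stabilizes once $d$ is large compared with the colength. Applying \Cref{prop:gen} with $E = \RRR = \bb{C}[\![\varpi^n, \varpi^d]\!]$ and with $E = \SSS = \bb{C}[\![\varpi]\!]$ rewrites the two left-hand sides as $\sum_\ell \sf{q}^\ell \sum_{\Delta \in I^\ell(E)} \sf{t}^{2\sf{dim}_\Delta} \Pi_\Delta^\Gen(\sf{a}, \sf{t}^2)$, with $\Gamma(\RRR) = n\bb{Z}_{\geq 0} + d\bb{Z}_{\geq 0}$ and $\Gamma(\SSS) = \bb{Z}_{\geq 0}$ respectively. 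Since the coefficient of each $\sf{q}^\ell$ involves only the finitely many $\Delta$ of colength $\ell$, it suffices to identify, for each fixed $\ell$, the common value of the inner sum for all $d$ coprime to $n$ with $d > n\ell$; the limits in the statement then exist coefficientwise in $\bb{Q}[\![\sf{q}, \sf{t}]\!][\sf{a}]$.

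The stabilization itself rests on the fact that, because $\gcd(n, d) = 1$, the semigroup $\Gamma(\RRR)$ is the disjoint union of the $n$ arithmetic progressions $\Gamma_j \vcentcolon= jd + n\bb{Z}_{\geq 0}$, $0 \leq j \leq n - 1$, and $\Gamma(\SSS)$ of the $n$ progressions $j + n\bb{Z}_{\geq 0}$. A colength-$\ell$ submodule $\Delta$ deletes an initial segment of length $m_j$ from each branch; once $d > n\ell$ the relation $\Delta + d \subseteq \Delta$ is vacuous on the $\SSS$-side, so $I^\ell(\SSS)$ is identified with $\{\vec{m} \in \bb{Z}_{\geq 0}^n : \sum_j m_j = \ell\}$, and on the $\RRR$-side it reduces to $m_0 \geq m_1 \geq \cdots \geq m_{n - 1}$ (the wrap-around constraint being vacuous as well), so $I^\ell(\RRR)$ is identified with the set of partitions $\lambda$ of $\ell$ with at most $n$ parts. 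In this stable range one extracts the remaining ingredients. On both sides $\Gen_n(\Delta)$ is exactly the set of $n$ branch minima; on the $\SSS$-side every branch minimum is a generator, so $\Pi_\Delta^\Gen(\sf{a}, \sf{t}^2)$ collapses to the $\Delta$-independent constant $\prod_{k=0}^{n-1}(1 + \sf{a}\sf{t}^{2k})$, while on the $\RRR$-side a branch minimum is a generator precisely at a strict descent of $(m_j)$, giving $\Pi_{\Delta_\lambda}^\Gen(\sf{a}, \sf{t}^2) = (1 + \sf{a})(1 + \sf{a}\sf{t}^2)^{s(\lambda) - 1}$ with $s(\lambda)$ the number of distinct values among the $n$ padded parts of $\lambda$ (the exponents $|\xi_n(\Delta, \cdot)|$ being computed exactly as in \Cref{lem:kappa}, using that $\gamma - d < 0$ in the stable range). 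The last ingredient is $\sf{dim}_\Delta$, which one reads off from \eqref{eq:dim-delta}: the generator contributions are elementary, but the syzygy contributions require the syzygy degrees of $\Delta$ over $\Gamma(\RRR)$, equivalently of $\Delta_\lambda$ as an $\RRR$-module. I expect this to be the main obstacle. The targets are $\sf{dim}_{\Delta_\lambda} = |\lambda| - \lambda_1$ on the $\RRR$-side and $\sf{dim}_{\Delta(\vec{m})} = \sum_k k\, m_k$ on the $\SSS$-side; the latter is already forced by the colength-$1$ computation, where $\cal{Q}^1$ stabilizes to $\bb{P}^{n-1}$, and the former I would obtain by analyzing the $2$-periodic resolution over the hypersurface ring $\RRR$.

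Granting these, the evaluation is formal. On the $\SSS$-side, with $\Pi^\Gen$ pulled out, the remaining sum $\sum_{\vec{m}} \sf{q}^{\sum m_k} \sf{t}^{2\sum k m_k}$ is a product of $n$ geometric series equal to $\prod_{k=0}^{n-1}(1 - \sf{q}\sf{t}^{2k})^{-1}$, and multiplying by $\prod_{k=0}^{n-1}(1 + \sf{a}\sf{t}^{2k})$ gives $\prod_{k=1}^n \frac{1 + \sf{a}\sf{t}^{2k-2}}{1 - \sf{q}\sf{t}^{2k-2}}$. On the $\RRR$-side, the substitution $e_j \vcentcolon= m_j - m_{j+1}$ (with $m_n \vcentcolon= 0$) turns $|\lambda| - \lambda_1$ into $\sum_j j\, e_j$, turns $|\lambda|$ into $\sum_j (j+1) e_j$, and turns $s(\lambda) - 1$ into $\#\{0 \leq j \leq n - 2 : e_j > 0\}$; summing the resulting independent series $\sum_{e \geq 0} x^e (1 + \sf{a}\sf{t}^2)^{[e > 0]} = \frac{1 + \sf{a}\sf{t}^2 x}{1 - x}$ over $e_0, \ldots, e_{n-1}$ yields $\prod_{k=1}^n \frac{1 + \sf{a}\sf{q}^{k-1}\sf{t}^{2k-2}}{1 - \sf{q}^k\sf{t}^{2k-2}}$ after cancelling matching factors in numerator and denominator.

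As a cross-check, and an alternative route to the stable values that avoids the explicit syzygy computation, one can instead identify $\coprod_\ell \cal{Q}^\ell$ and $\CptPic_{(1^n)}/\Lattice$ in the $d \to \infty$ regime with the affine Grassmannian exhausted by the $\GL_n$- and $\SL_n$-affine Springer fibers of \Cref{sec:asf}, whose Poincaré series are exactly these two products.
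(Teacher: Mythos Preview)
Your overall strategy---reducing via \Cref{prop:gen} to a stable combinatorial sum and then evaluating---is sound, and in fact streamlines the paper's approach somewhat: the paper works directly with nested pairs via \Cref{cor:nest} and \Cref{lem:paving-nest}, bootstrapping the $\sf{a}$-dependence by hand, whereas you absorb that step into the product $\Pi_\Delta^\Gen$. Your computation of $\Pi_\Delta^\Gen$ in the stable range is correct on both sides, and your final product evaluations are correct.

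However, there is a genuine error in your $\SSS$-side dimension formula. You parametrize $\Delta \in I^\ell(\SSS)$ by the vector $\vec{m}$ where $m_j$ is the number of gaps in the residue class $j + n\bb{Z}_{\geq 0}$, and then assert $\sf{dim}_{\Delta(\vec{m})} = \sum_k k\, m_k$. This is false already for $n = 2$, $\ell = 2$: take $\Delta = \{2, 3, 4, \ldots\}$, so $m_0 = m_1 = 1$ and your formula gives $\sf{dim} = 1$; but $M_\Delta = \varpi^2 \SSS$ is an isolated fixed point in $\cal{Q}^2$, so $\sf{dim}_\Delta = 0$. Conversely $\Delta = \{1, 3, 4, 5, \ldots\}$ has $m_0 = 2$, $m_1 = 0$, your formula gives $0$, but the attracting cell is the $\bb{A}^1$ of modules $\RRR\cdot(\varpi + \lambda\varpi^2)$. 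The colength-$1$ check you cite is not enough to force the formula for higher $\ell$.

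The fix is to change parametrization. The paper indexes $\Delta$ not by residue class but by the vector $\vec{g} \in \bb{Z}_{\geq 0}^n$ where $g_i$ is the number of gaps of $\Delta$ lying strictly above exactly $i - 1$ of the $n$-generators. One then checks from \eqref{eq:dim-delta} that $\sf{dim}_\Delta = \sum_i (i-1) g_i$ and $\ell = \sum_i g_i$; since $\Pi_\Delta^\Gen$ is $\Delta$-independent on the $\SSS$-side, your product evaluation goes through unchanged with $\vec{g}$ in place of $\vec{m}$. On the $\RRR$-side your target $\sf{dim}_{\Delta_\lambda} = |\lambda| - \lambda_1$ is correct, but as you note it still needs proof; the paper handles this by citing the staircase argument of \cite[Prop.~6]{ors} rather than via a periodic resolution.
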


\begin{proof}
	Throughout, \Cref{cor:nest} allows us to replace the expressions   $\Psi(\cal{F}\sf{Hilb}_{n, d})$ and $\Psi(\cal{F}\sf{Quot}_{n, d})$ with corresponding generating functions for nested pairs of $\RRR$-modules, and \Cref{lem:paving-nest} allows us to compute the latter using the combinatorics of the monomial $\RRR$-modules.
	
	The identity for $\cal{F}\sf{Hilb}_{n, d}$ was shown in \cite{ors}: See their Proposition 6.
	(Recall that our variables $\sf{a}, \sf{q}, \sf{t}$ correspond to their variables $a^2 t, q^2, t$.)
	To prepare for the proof of the second identity, we briefly review their argument.
	
	Using ``staircase diagrams'' \cite[\S{3.2}]{ors} to index monomial ideals, or equivalently elements $\Delta \in I(\RRR)$, then invoking \Cref{lem:paving}, it is not hard to show that the identity for $\cal{F}\sf{Hilb}_{n, d}$  holds when $\sf{a} = 0$.
	Indeed, as $d \to \infty$, the defining condition that staircase width be bounded by $d$ disappears.
	
	The formula that incorporates $\sf{a}$ can be bootstrapped from the $\sf{a} = 0$ formula by systematically replacing single elements $\Delta$ with collections of pairs $(\Delta'', \Delta')$.
	Namely, if $\Delta$ is fixed, then we consider all $2^n$ ways of choosing a subset of $\{1, \ldots, n\}$, and add a column of height $h$ to the staircase of $\Delta$ for each $h$ in the subset.
	This determines some new $\Delta' \in I(\RRR)$.
	We get a larger $\Delta'' \supseteq \Delta$ by replacing each new column with a column that is one box shorter in height.
	We can then check that each $\Delta$ gives rise to $2^n$ pairs $(\Delta'', \Delta')$, that every possible pair arises this way, and that the total contribution of the pairs $(\Delta'', \Delta')$ to the series in $\sf{a}, \sf{q}, \sf{t}$ is the contribution of $\Delta$ to the $\sf{a} = 0$ series multiplied by some binomial factor.
	This factor is precisely the numerator $\prod_{k = 1}^n {(1 + \sf{a} \sf{q}^{k - 1} \sf{t}^{2k - 2})}$.
	
	Now we turn to the identity for $\cal{F}\sf{Quot}_{n, d}$.
	In place of staircases, we index elements $\Delta \in I(\SSS)$ by vectors $\vec{g} = (g_1, \ldots, g_n) \in \bb{Z}_{\geq 0}^n$, where $g_i$ is the number of elements of $\Gamma(\SSS) = \bb{Z}_{\geq 0}$ that are greater than exactly $i - 1$ of the elements of $\Gen_n(\Delta)$.
	Again, as $d \to \infty$, any constraints on the vector $\vec{g}$ disappear.
	If $\Delta$ is indexed by $\vec{g}$, then its contribution to the $\sf{a} = 0$ series is $\sf{q}^{\sum_i g_i} \sf{t}^{2 \sum_i (i - 1)g_i}$ by \Cref{lem:paving}.
	
	To bootstrap the $\sf{a}$ variable, we send $\Delta$ to the collection of all pairs $(\Delta, \Delta')$ where $\Delta$ is the same and $\Delta' \subseteq \Delta$ is obtained as follows:
	Pick a subset of $\{1, \ldots, n\}$, then form $\Gen_n(\Delta')$ from $\Gen_n(\Delta)$ by shifting up by $1$ those elements of $\Gen(\Delta)$ whose residue modulo $n$ belongs to the subset.
	By \Cref{lem:paving-nest}, the total contribution of these pairs to the series in $\sf{a}, \sf{q}, \sf{t}$ is the contribution of the original $\Delta$ to the $\sf{a} = 0$ series multiplied by the binomial factor $\prod_{k = 1}^n {(1 + \sf{a} \sf{t}^{2k - 2})}$.
\end{proof}

Observe that $\Psi(\sf{a}, \cal{F}\sf{Hilb}_{n, d}(\sf{q}, \sf{t}))$ and $\Psi(\sf{a}, \cal{F}\sf{Quot}_{n, d}(\sf{q}, \sf{t}))$ agree with their $d \to \infty$ limits up to degree $d$ in $\sf{q}$.
At the same time:

\begin{prop}\label{prop:delta}
	For any plane curve germ with complete local ring $\RRR$, the series $\Psi(\sf{a}, \cal{F}\sf{Hilb}(\sf{q}, \sf{t}))$ is determined by its expansion up to degree $\delta$ in $\sf{q}$.
	If $\RRR \simeq \bb{C}[\![\varpi^n, \varpi^d]\!]$ for coprime $n, d > 0$, then the same holds for $\Psi(\sf{a}, \cal{F}\sf{Quot}(\sf{q}, \sf{t}))$.
\end{prop}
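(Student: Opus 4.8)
The plan is to reduce the statement, for each of the two series $F(\sf{a},\sf{q},\sf{t})\in\{\Psi(\sf{a},\cal{F}\sf{Hilb}(\sf{q},\sf{t})),\ \Psi(\sf{a},\cal{F}\sf{Quot}(\sf{q},\sf{t}))\}$ (the second only under the coprimality hypothesis), to two structural facts: (i) $F$ is rational in $\sf{q}$, of the form $N_F(\sf{a},\sf{q},\sf{t})/(1-\sf{q})^b$ with $N_F$ a polynomial in $\sf{q}$ of degree at most $2\delta$; and (ii) $N_F$ obeys a functional equation of the shape $N_F(\sf{a},\sf{q},\sf{t})=(\sf{q}\sf{t}^2)^\delta\,N_F(\sf{a}^{-1},(\sf{q}\sf{t}^2)^{-1},\sf{t})$, so that the coefficient of $\sf{q}^i$ in $N_F$ is determined by, and determines, that of $\sf{q}^{2\delta-i}$. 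Granting (i) and (ii), the conclusion is immediate: since $(1-\sf{q})^{-b}$ is an invertible power series, the expansion of $F$ modulo $\sf{q}^{\delta+1}$ determines the coefficients of $\sf{q}^0,\dots,\sf{q}^\delta$ in $N_F$, hence by (ii) all of $N_F$, hence $F$.

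For $\cal{F}\sf{Hilb}$ and an arbitrary plane curve germ, I would obtain (i) by using \Cref{cor:nest} to rewrite $\Psi(\sf{a},\cal{F}\sf{Hilb}(\sf{q},\sf{t}))$ as the generating series of the nested Hilbert schemes $\cal{H}^\ell_{\nest{m}}$, and then the standard stabilization of these schemes as the colength $\ell$ grows: $\chi(\cal{H}^\ell_{\nest{m}},\sf{t})$ eventually becomes a polynomial in $\ell$ of degree $b-1$, with the stable regime controlled by the conductor $\fr{c}_\RRR$ (recall that $\dim_\bb{C}(\SSS/\fr{c}_\RRR)=2\delta$ in the Gorenstein case). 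For (ii) I would use the Gorenstein duality $I\mapsto(\RRR:I)$, an isomorphism from the scheme of colength-$\ell$ ideals $I\subseteq\RRR$ onto the scheme of fractional ideals $M\supseteq\RRR$ with $\dim_\bb{C}(M/\RRR)=\ell$, which is compatible — after reversing $\nu$ — with $y$-stable flags on $I/xI$ and with the nesting condition $xI+yI\subseteq J\subseteq I$; composing with a suitable normalization by $\fr{c}_\RRR$ turns this into an identification at colength $2\delta-\ell$, giving the functional equation at the level of each $\sf{Hilb}_\nu$, which then descends to $\cal{F}\sf{Hilb}$ through the Springer formalism of \Cref{prop:springer}. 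This is the motivic, $\sf{a}$-refined version of the classical functional equation of the Hilbert zeta function of a plane curve germ; compare the discussion of mirror symmetry in \cite{ors}.

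For $\cal{F}\sf{Quot}$ in the coprime case, fact (i) is immediate from \Cref{thm:main}, which gives $\cal{F}\sf{Quot}(\sf{q},\sf{t})=(1-\sf{q})^{-b}\cal{F}\sf{Pic}(\sf{q},\sf{t})$ with $\cal{F}\sf{Pic}(\sf{q},\sf{t})=\sum_{0\le\gap\le\delta}\sf{q}^\gap\cal{F}\chi(\CptPic_{(1^n)}(\gap)/\Lattice,\sf{t})$ a polynomial in $\sf{q}$ of degree at most $\delta$; after the substitution $\sf{t}\mapsto\sf{q}^{1/2}\sf{t}$ needed in \Cref{thm:n-equals-3}, the degree rises only to at most $2\delta$, since $\CptPic_{(1^n)}(\gap)/\Lattice$ is a projective variety of dimension at most $\delta$. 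For (ii) I would invoke \Cref{thm:ors-quot}(1), which identifies $\Psi(\sf{a},\cal{F}\sf{Quot}_{n,d}(\sf{q},\sf{t}))$ with $\bar{\sf{X}}_{n,d}(\sf{a},\sf{q},\sf{t}^2)$, together with the mirror symmetry of Khovanov--Rozansky homology conjectured in \cite{dgr} and proved in \cite{or, ghm}; being a genuine symmetry of $\bar{\sf{X}}_{n,d}$ in all three variables, it survives the substitution $\sf{t}\mapsto\sf{q}^{1/2}\sf{t}$.

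I expect the main obstacle to be fact (ii) for $\cal{F}\sf{Hilb}$. The classical functional equation concerns only the $\sf{a}\to 0$ specialization (the plain Hilbert zeta function), and one must check that the duality $I\mapsto(\RRR:I)$ upgrades, uniformly over the parabolic type $\nu$ and the nesting index $m$, to the full $\sf{a},\sf{t}$-graded, Springer-equivariant statement — in particular that it interacts correctly with passage to $I/xI$ and with the normalization by the conductor. Everything else is either elementary bookkeeping or a citation.
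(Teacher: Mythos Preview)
Your overall strategy—rationality of the series plus a functional equation forcing the high-degree coefficients from the low-degree ones—is exactly the paper's. Two remarks on execution.

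For $\cal{F}\sf{Hilb}$: the paper does not re-derive the functional equation from Gorenstein duality; it simply cites \cite[Prop.~3]{ors}, which already proves that $\sf{q}^{-\delta}(1-\sf{q})^b\Psi(\sf{a},\cal{F}\sf{Hilb}(\sf{q},\sf{t}))$ is a Laurent polynomial in $\sf{q}$-degrees $-\delta$ through $\delta$, invariant under $\sf{q}^{-1}\mapsto\sf{q}\sf{t}^2$. So what you flag as your main obstacle is already handled in the literature, and there is nothing to redo.

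For $\cal{F}\sf{Quot}$: the paper's route is the one you name—invoke \Cref{thm:ors-quot}(1) to identify with KhR homology, then use the $\sf{q}^{-1/2}\leftrightarrow\sf{q}^{1/2}\sf{t}$ symmetry of the reduced invariant from \cite{or,ghm}. But your own observation via \Cref{thm:main} is actually sharper than you realize: since $\cal{F}\sf{Pic}(\sf{q},\sf{t})$ is a polynomial in $\sf{q}$ of degree at most $\delta$ (not $2\delta$), so is $(1-\sf{q})\Psi(\sf{a},\cal{F}\sf{Quot}(\sf{q},\sf{t}))$, and the truncation to $\sf{q}$-degree $\leq\delta$ already determines it with no functional equation needed. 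Your remarks about the substitution $\sf{t}\mapsto\sf{q}^{1/2}\sf{t}$ belong to the application in \Cref{thm:n-equals-3}, not to this proposition, and are the source of your confusion about the degree rising to $2\delta$.
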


\begin{proof}
	Observe that the expansion of a formal series $\Psi \in \bb{Z}[\![\sf{q}]\!][\sf{a}^{\pm 1}, \sf{q}^{-1}, \sf{t}^{\pm 1}]$ up to a given $\sf{q}$-degree determines the expansion of $(1 - \sf{q})^b \Psi$ up to that $\sf{q}$-degree, for any integer $b > 0$.
	
	Proposition 3 of \cite{ors} shows that if $\Psi = \Psi(\sf{a},\cal{F}\sf{Hilb}(\sf{q}, \sf{t}))$ and $b$ is the number of branches of $\RRR$, then $\sf{q}^{-\delta} (1 - \sf{q})^b \Psi$ is a Laurent polynomial in $\sf{q}$-degrees $-\delta$ through $\delta$, invariant under $\sf{q}^{-1} \mapsto \sf{q}\sf{t}^2$.
	(Again, our $\sf{q}$ is their $q^2$.)
	So in this case, the expansion of $\Psi$ up to $\sf{q}$-degree $\delta$ determines the entire series.
	
	Now take $\Psi = \Psi(\sf{a}, \cal{F}\sf{Hilb}(\sf{q}, \sf{t}))$, supposing that $\RRR \simeq \bb{C}[\![\varpi^n, \varpi^d]\!]$ for coprime $n, d > 0$.
	By case (1) of \Cref{thm:ors-quot}, $\Psi$ matches the graded dimension of the unreduced KhR homology of the $(n, d)$-torus knot, up to certain grading shifts and substitutions.
	Hence, $(1 - \sf{q})\Psi$ matches the corresponding series from \emph{reduced} KhR homology, as defined in \Cref{sec:conventions}.
	Corollary 1.0.2 of \cite{or} or Theorem 1.2 of \cite{ghm} show that the latter, normalized with our conventions and shifted by $\sf{q}^{-\frac{\delta}{2}}$, is a Laurent polynomial in $\sf{q}^{\frac{1}{2}}$-degrees $-\delta$ through $\delta$, invariant under $\sf{q}^{-\frac{1}{2}} \mapsto \sf{q}^{\frac{1}{2}}\sf{t}$.
	So again, the expansion of $\Psi$ up to $\sf{q}$-degree $\delta$ determines the entire series.
\end{proof}

Together, \Cref{prop:asymptotic} and \Cref{prop:delta} imply that if $\delta \leq d $, then
\begin{align}
	\Psi(\sf{a}, \cal{F}\sf{Hilb}(\sf{q}, \sf{t})) = \Psi(\sf{a}, \cal{F}\sf{Quot}(\sf{q}, \sf{q}^{\frac{1}{2}}\sf{t})).
\end{align}
But $\delta = \frac{1}{2}(n - 1)(d - 1)$.
So the hypothesis can be simplified to $n \leq 3$.
Finally, when $n \leq 3$, the map $\Psi$ loses no information, so we can omit it from both sides.
This proves \Cref{thm:n-equals-3}.

\section{\texorpdfstring{Polynomial Actions and $y$-ification}{Polynomial Actions and y-ification}}\label{sec:polynomial}

\subsection{}

In this section, we review the precise definition of $y$-ified Khovanov--Rozansky homology, then give a precise statement of \Cref{conj:ors-quot-enhanced}, spelling out all of the gradings involved.
This also serves as preparation for \Cref{sec:noncoprime}.

\subsection{}

We freely assume the notation of \Cref{sec:conventions}.
Thus, $T = \bb{G}_m^n$ and $\Bim{\bb{S}}$ is the category of Soergel bimodules over $\bb{S} = \ur{H}_{T}^\ast(\point)$.
We explain in \Cref{sec:conventions} that for any braid $\beta$ on $n$ strands, the Khovanov--Rozansky homology of the link closure of $\beta$ can be computed from Hochschild cohomology of the Rouquier complex $\bar{\cal{T}}_\beta$, an object of $\sf{K}^b(\Bim{\bb{S}})$.

In \cite[\S{5.1}]{BLMSnotes}, the authors explain that the term-by-term action of $\bb{S} \otimes \bb{S}^\op$ on $\bar{\cal{T}}_\beta$ factors through that of a smaller quotient.
Fix matching coordinates 
\begin{align}
	\bb{S} = \bb{C}[t_1, \ldots, t_n]
	\quad\text{and}\quad
	\bb{S}^\op = \bb{C}[t_1^\op, \ldots, t_n^\op].
\end{align}
Let $w \in S_n$ be the underlying permutation of $\beta$.
Then the actions of $t_i$ and $t_{w(i)}^\op$ on $\bar{\cal{T}}_\beta$ are homotopic for all $i$.
So up to homotopy, the $(\bb{S} \otimes \bb{S}^\op)$-action on $\bar{\cal{T}}_\beta$ factors through the quotient of $\bb{S} \otimes \bb{S}^\op$ by the ideal $\langle (t_i - t_{w(i)}^\op)_i \rangle$.

At the same time, the actions of $t_i$ and $t_i^\op$ on $\bb{S}$ coincide for all $i$.
So under the Hochschild cohomology functor $\overline{\sf{HH}} = \bigoplus_{i, j} \Ext_{\bb{S} \otimes \bb{S}^\op}^i(\bb{S}, (-)(j))$, the $(\bb{S} \otimes \bb{S}^\op)$-action on $\bar{\cal{T}}_\beta$ is transported to an action that also factors through the quotient of $\bb{S} \otimes \bb{S}^\op$ by the ideal $\langle (t_i - t_i^\op)_i \rangle$.

Thus, $\overline{\sf{HH}}(\bar{\cal{T}}_\beta)$ inherits an action of the ring of $w$-coinvariants
\begin{align}
	\bb{S}_w \vcentcolon= \bb{S}/\langle (t_i - t_{w(i)})_i \rangle.
\end{align}
This is a polynomial ring on $b$ variables, where $b$ is the number of components of the link closure of $\beta$.
It will be convenient to fix coordinates
\begin{align}
	\bb{S}_w = \bb{C}[\vec{x}] \vcentcolon= \bb{C}[x_1, \ldots, x_b]
\end{align}
so that each $x_j$ is the image of some $t_i$.
Recalling that Soergel bimodules are graded so that $\deg(t_i) = 2$, we see that $\vec{x}$ acts on $\overline{\sf{HH}}(\bar{\cal{T}}_\beta)$ with bidegree $(0, 2)$.
Hence, $\vec{x}$ acts on $\overline{\HHH}(\bar{\cal{T}}_\beta) = \bigoplus_{I, J, K} \ur{H}^K(\overline{\sf{HH}}^{I, J}(\bar{\cal{T}}_\beta))$ with tridegree $(0, 2, 0)$.

\subsection{}

In \cite{GH17}, Gorsky--Hogancamp introduced a deformation of $\overline{\HHH}$ called \dfemph{$y$-ified Khovanov--Rozansky homology}, which we will denote $\overline{\HY}$ and review below.

We write $d$ for the differential on $\bar{\cal{T}}_\beta$.
Let $h_i$ be a homotopy from the $t_i$-action on $T_\beta$ to the $t_{w(i)}^\op$-action, so that $[d, h_i] = t_i - t_{w(i)}^\op$ as operators.
We may choose the $h_i$ so that they square to zero and anticommute.
Let $\bb{S}' = \bb{C}[u_1, \ldots, u_n]$ be another copy of $\bb{S}$, and let $d' = d \otimes {\id} + \sum_i h_i \otimes u_i$ as an operator on $\bar{\cal{T}}_\beta \otimes \bb{S}'$.
We compute that $(d')^2 = \sum_i {(t_i - t_{w(i)}^\op)} \otimes u_i$.
We deduce that the induced action of $(d')^2$ on $\overline{\sf{HH}}(\bar{\cal{T}}_\beta) \otimes \bb{S}'_w$ vanishes, where $\bb{S}'_w \vcentcolon= \bb{S}'/\langle (u_i - u_{w(i)})_i\rangle$, like before.
By definition, $\overline{\HY}(\bar{\cal{T}}_\beta) = \bigoplus_{I, J, K} \overline{\HY}^{I, J, K}(\bar{\cal{T}}_\beta)$, where
\begin{align}
	\overline{\HY}^{I, J, K}(\bar{\cal{T}}_\beta)
	&= \ur{H}^K(\overline{\sf{HH}}^{I, J}(\bar{\cal{T}}_\beta) \otimes \bb{S}'_w, d').
\end{align}
We again fix coordinates $\bb{S}'_w = \bb{C}[\vec{y}] \vcentcolon= \bb{C}[y_1, \ldots, y_b]$, so that each $y_j$ is the image of some $u_i$.
The definition of $d'$ implies that $\vec{y}$ acts on the complex $(\overline{\sf{HH}}(\bar{\cal{T}}_\beta) \otimes \bb{S}'_w, d')$ with bidegree $(0, -2)$ on the first factor and cohomological degree $2$.
Hence, $\vec{y}$ acts on $\overline{\HY}(\bar{\cal{T}}_\beta)$ with tridegree $(0, -2, 2)$.

Altogether, the $y$-ified homology of $\beta$ is a triply-graded vector space $\overline{\HY}(\bar{\cal{T}}_\beta)$ equipped with a bigraded $\bb{C}[\vec{x}, \vec{y}]$-module structure, which recovers $\overline{\HHH}(\bar{\cal{T}}_\beta)$ upon passing from $\bb{C}[\vec{x}, \vec{y}]$ to $\bb{C}[\vec{x}, \vec{y}]/\langle \vec{y}\rangle = \bb{C}[\vec{x}]$.

\subsection{}

Writing $e$ for the writhe of $\beta$, as in \Cref{sec:conventions}, let $\bar{\sf{Y}}_\beta \vcentcolon= \bigoplus_{i, j, k \in \bb{Z}} \bar{\sf{Y}}_\beta^{i, \frac{j}{2}, \frac{k}{2}}$ be the $(\bb{Z} \times \frac{1}{2}\bb{Z} \times \frac{1}{2}\bb{Z})$-graded $\bb{C}[\vec{x}, \vec{y}]$-module defined by
\begin{align}
	\bar{\sf{Y}}_\beta^{i, \frac{j}{2}, \frac{k}{2}}
	= \overline{\HY}^{i, e - 2i + j - k, e - k}(\bar{\cal{T}}_\beta).
\end{align}
From the formula $\overline{\HY}^{I, J, K}(\bar{\cal{T}}_\beta) = \bar{\sf{Y}}_\beta^{I, I + \frac{J}{2} - \frac{K}{2}, e - \frac{K}{2}}$, we see that 
\begin{align}
	\bar{\sf{X}}_\beta(\sf{a}, \sf{q}, \sf{t})
	=	\sum_{i, j, k \in \bb{Z}}
	\sf{a}^i \sf{q}^{\frac{j}{2}} \sf{t}^{\frac{k}{2}}
	\dim (
	\bar{\sf{Y}}_\beta^{i, \frac{j}{2}, \frac{k}{2}} \otimes_{\bb{Z}[\vec{x}, \vec{y}]} \bb{Z}[\vec{x}]
	)
\end{align}
in the notation of \Cref{sec:conventions}.
Moreover, we see that $\vec{x}$ and $\vec{y}$ respectively act on each summand $\bar{\sf{Y}}_\beta^i \vcentcolon= \bigoplus_{j, k \in \bb{Z}} \bar{\sf{Y}}_\beta^{i, \frac{j}{2}, \frac{k}{2}}$ with bidegrees $(1, 0)$ and $(0, -1)$.

We return to our setup where $f(x, y) = 0$ is a generically separable degree-$n$ cover of the $x$-axis, embedded in the $x, y$-plane.
The preimage in the cover of a positively-oriented loop around $x = 0$ is a braid $\beta_f$ on $n$ strands such that the number $b$ of branches of $f$ is also the number of components of the link closure of $\beta$, and such that $\bar{\sf{X}}_f = \bar{\sf{X}}_{\beta_f}$.
We similarly set $\bar{\sf{Y}}_f = \bar{\sf{Y}}_{\beta_f}$.

Let $T(b) = \bb{G}_m^b$.
As explained in the introduction, once we fix identifications 
\begin{align}
	\bb{C}[\vec{x}] \simeq \bb{C}[\Lattice_{\geq 0}]
	\quad\text{and}\quad
	\bb{C}[\vec{y}] \simeq \ur{H}_{T(b)}^\ast(\point),
\end{align}
the commuting actions of $\Lattice_{\geq 0}$ and $T(b)$ on $\coprod_\ell \cal{Q}_\nu^\ell$ together produce a $\bb{C}[\vec{x}, \vec{y}]$-module structure on $\bigoplus_\ell \ur{H}_\ast^{\BM, T(b)}(\cal{Q}_\nu^\ell)$ for all compositions $\nu$ of $n$.
The variables $x_j$ and $y_j$ respectively act by $1$ and $0$ on the length $\ell$, by $0$ and $-2$ on the cohomological degree, and by $0$ and $-2$ on the weight filtration $\sf{W}_{\leq \ast}$.

Let $\sf{Q}_{\SSS, \nu}^{\vec{x}, \vec{y}} \vcentcolon= \bigoplus_{\ell, k} \sf{Q}_{\SSS, \nu}^{\vec{x}, \vec{y}, \ell, k}$ be the $\bb{Z}^2$-graded $\bb{C}[\vec{x}, \vec{y}]$-module defined by
\begin{align}
	\sf{Q}_{\SSS, \nu}^{\vec{x}, \vec{y}, \ell, k}
	= \gr_k^\sf{W} \ur{H}_\ast^{\BM, T(b)}(\cal{Q}_\nu^\ell).
\end{align}
We abbreviate by writing $\tilde{\sf{Q}}_\SSS^{\vec{x}, \vec{y}} = \sf{Q}_{\SSS, (1^n)}^{\vec{x}, \vec{y}}$.
The Springer action of $S_n$ on the Borel--Moore homology of $\coprod_\ell \cal{Q}_{(1^n)}^\ell$ lifts to its equivariant Borel--Moore homology and commutes with the $\bb{C}[\vec{x}, \vec{y}]$-action above.
So by \Cref{prop:springer}, we can use the bigraded $(\bb{C}[\vec{x}, \vec{y}] \times \bb{C}S_n)$-module formed by $\tilde{\sf{Q}}_\SSS^{\vec{x}, \vec{y}}$ to recover the bigraded $\bb{C}[\vec{x}, \vec{y}]$-modules $\sf{Q}_{\SSS, \nu}^{\vec{x}, \vec{y}}$ for all $\nu$.

Abusing notation, let $\Psi$ be the functor from bigraded $\bb{C}S_n$-modules to triply-graded vector spaces given by
\begin{align}
	\Psi(M)^{i, j, k} = \bigoplus_{j, k} 
	\Hom_{S_n}(V_{(n - i + 1, 1^{i - 1})} \oplus V_{(n - i, 1^i)}, M^{j, k}),
\end{align}
where in general, $V_\lambda$ is the irreducible representation of $S_n$ indexed by $\lambda \vdash n$.
Altogether, the most precise version of \Cref{conj:ors-quot-enhanced} is:

\begin{conj}\label{conj:ors-quot-enhanced-full}
	In the setup above,
	\begin{enumerate}
		\item 	$\bar{\sf{Y}}_f$ is supported in integral tridegrees.
		
		\item 	There is an isomorphism of $\bb{C}[\vec{x}, \vec{y}]$-modules $\bar{\sf{Y}}_f
		\xrightarrow{\sim} \Psi(\tilde{\sf{Q}}_\SSS^{\vec{x}, \vec{y}})$ 
		that sends degree $(i, j, k)$ onto degree $(i, j, 2k)$.
		In particular, $\Psi(\tilde{\sf{Q}}_\SSS^{\vec{x}, \vec{y}})$ is supported in even cohomological degrees.
		
	\end{enumerate}
\end{conj}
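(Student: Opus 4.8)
The natural plan is to attack the conjecture first for the families $f(x, y) = y^n - x^{nk}$, where \Cref{thm:ors-quot}(2) already establishes its non-$y$-ified shadow; the general case is discussed at the end. For such $f$ the number of branches is $b = \gcd(n, nk) = n$, so $\vec{x} = (x_1, \ldots, x_n)$ and $\vec{y} = (y_1, \ldots, y_n)$, and the torus is $T(b) = T(n)$. We would dispatch part (1) first and uniformly. On the Quot side, \Cref{lem:paving} and its flag analogue give a $T(n)$-stable paving of each $\cal{Q}_{(1^n)}^\ell$ by affine spaces, so $\ur{H}_\ast^{\BM, T(n)}(\cal{Q}_{(1^n)}^\ell)$ is free over $\ur{H}_{T(n)}^\ast(\point) = \bb{C}[\vec{y}]$, concentrated in even degrees, with its weight filtration equal (up to shift) to the homological one; since $\Psi$ is exact, the same holds for $\Psi(\tilde{\sf{Q}}_\SSS^{\vec{x}, \vec{y}})$. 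On the KhR side, integrality of the tridegrees of $\bar{\sf{Y}}_f$ is part of the known structure of the $y$-ified homology of positive torus links \cite{GH17, hm}.

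For part (2), the plan is to route both $\bar{\sf{Y}}_f$ and $\tilde{\sf{Q}}_\SSS^{\vec{x}, \vec{y}}$ through the symmetric function $\nabla^k p_{(1^n)}$, upgraded to a bigraded $(\bb{C}[\vec{x}, \vec{y}] \times \bb{C}S_n)$-module. On the Quot side: by the comparison recalled in \Cref{sec:asf}, identify $\coprod_\ell \cal{Q}_{(1^n)}^\ell$ with the positive part of the $\GL_n$ affine Springer fiber of \cite{CM21}, equivariantly for both the uniformizer-scaling monoid $\Lattice_{\geq 0}$ and the torus $T(n)$, so that the $\bb{C}[\vec{x}]$-action becomes that of the monoid $\Lattice_{\geq 0}$ and the $\bb{C}[\vec{y}]$-action becomes that of the equivariant parameters; then feed this into the computation of Carlsson--Mellit \cite{CM21}, enhanced to the parabolic setting so as to see the full Springer $S_n$-action, to identify $\tilde{\sf{Q}}_\SSS^{\vec{x}, \vec{y}}$ with a standard module realizing $\nabla^k p_{(1^n)}$ on which $\vec{x}$ acts by multiplication and $\vec{y}$ by the equivariant operators. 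On the KhR side: invoke the computation of $y$-ified homology of torus links by Gorsky--Hogancamp \cite{GH17} to identify $\bar{\sf{Y}}_f$ with the $\Psi$-image of the same module, enhancing the non-equivariant identity $\bar{\sf{X}}_{n, nk} = \Psi(\sf{a}, \nabla^k p_{(1^n)})$ from \Cref{sec:noncoprime}; then compare, tracking the shift $(i, j, k) \mapsto (i, j, 2k)$, which is the usual bookkeeping converting cohomological degree to the weight grading, exactly as in the passage from $\bar{\sf{X}}_f$ to $\cal{F}\sf{Quot}$.

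The crux is the upgrade from numerics to modules: \Cref{thm:ors-quot}(2) already matches the graded dimensions of the two sides, and what is new is the assertion that the monoid action of $\Lattice_{\geq 0}$ corresponds to multiplication by $\vec{x}$, and the equivariant and $y$-ification parameters to $\vec{y}$, under the identifications above. The machinery of Carlsson--Mellit is designed to compute (equivariant) cohomology as a vector space or as a ring, so extracting and matching these operators requires pushing them through the affine-Springer-fiber dictionary of \Cref{sec:asf} at the level of actual maps, not merely Hilbert series; this I expect to be the main obstacle. A secondary difficulty is that the parabolic enhancement of \cite{CM21} needed to access the $S_n$-structure, hence all $\nu$, may not be available off the shelf. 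For $f = y^n - x^d$ with $n, d$ coprime there is no torus action on $\coprod_\ell \cal{Q}^\ell$ beyond $T(1) = \bb{G}_m$, so the general case would instead require carrying the polynomial and $y$-actions through the chain of equivalences in \Cref{subsec:proof-b}, starting from the known $y$-ified homology of $(n, d)$ torus knots; we leave this, and the case of arbitrary plane curve germs, to future work.
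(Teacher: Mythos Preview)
The statement you are attempting to prove is labeled \emph{Conjecture} in the paper (it is the precise form of \Cref{conj:ors-quot-enhanced}), and the paper does \emph{not} prove it. There is therefore no proof in the paper to compare your proposal against. What the paper does establish, as a byproduct of the argument in \Cref{sec:noncoprime}, is exactly the partial information you describe for $f(x,y) = y^n - x^{nk}$: part~(1) (integrality of the tridegrees of $\bar{\sf{Y}}_f$) and the matching of trigraded \emph{dimensions} in part~(2), both obtained by routing each side through $\nabla^k p_{(1^n)}$ via \cite{GH17} on the KhR side and \cite{CM21} on the Quot side. Your plan for these partial results is the same as the paper's.

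You are candid that your proposal is a research plan rather than a proof, and your identification of the crux is accurate: the paper's argument matches Hilbert series, not $\bb{C}[\vec{x},\vec{y}]$-module structures, and upgrading the Carlsson--Mellit and Gorsky--Hogancamp computations to track the $\Lattice_{\geq 0}$- and equivariant-parameter actions as honest module maps is precisely the missing ingredient. This is not a gap in your reasoning so much as an acknowledgment that the conjecture remains open; the paper makes no claim beyond the numerical match even in the $y^n = x^{nk}$ case.
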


\begin{rem}
	In the definition of $\tilde{\sf{Q}}_\SSS^{\vec{x}, \vec{y}}$, we did not collapse the cohomological degree to an Euler characteristic, as in the definition of $\sf{Quot}(\sf{q}, \sf{t})$.
	Thus, the statement that $\Psi(\tilde{\sf{Q}}_\SSS^{\vec{x}, \vec{y}})$ is supported in even cohomological degrees is needed to ensure that \Cref{conj:ors-quot-enhanced-full} specializes to \Cref{conj:ors-quot} upon base change from $\bb{C}[\vec{x}, \vec{y}]$ to $\bb{C}[\vec{x}]$.
	An analogous statement about the cohomology of $\CptPic/\Lattice$ was shown in \cite{gmo} for certain unibranch plane curve germs, called ``generic'' germs in \cite{gmo}.
\end{rem}

\section{\texorpdfstring{$(n, nk)$ Torus Links}{(n, nk) Torus Links}}\label{sec:noncoprime}

\subsection{}

In this section, we prove case (2) of \Cref{thm:ors-quot}, stating in the notation of \S\ref{subsec:intro-toric} that $\bar{\sf{X}}_{n, nk}(\sf{a}, \sf{q}, \sf{t}^2) = \Psi(\sf{a}, \cal{F}\sf{Quot}_{n, nk}(\sf{q}, \sf{t}))$ for any integer $k > 0$.

Throughout, $f(x, y) = y^n - x^{nk}$.
For such $f$, our argument will implicitly prove \Cref{conj:ors-quot-enhanced-full}(1), as well as the matching of trigradings in \Cref{conj:ors-quot-enhanced-full}(2).
The strategy is to relate both sides to $\nabla^k p_{(1^n)} \in \Lambda_{\sf{q}, \sf{t}}^n$, where in general, $p_\lambda$ is the power-sum symmetric function indexed by $\lambda \vdash n$, and $\nabla$ is the Bergeron--Garsia operator on $\Lambda_{\sf{q}, \sf{t}}^n$ \cite{hhlru}.
We will use the theory of symmetric functions freely.
For more background on our tools, see \cite{haiman, macdonald}.

\subsection{}

In \cite{CM21}, Carlsson--Mellit computed a version of the underlying bigraded $\bb{C}S_n$-module of $\tilde{\sf{Q}}_\SSS^{\vec{x}, \vec{y}}$ for the chosen $f$.
To make this precise, let 
\begin{align}
	\tilde{\sf{Q}}_{\SSS, n, nk}^{\BM, T(n)}(\sf{q}, \sf{t})
	= \sum_{\ell, k}
	\sf{q}^\ell \sf{t}^k 
	\ur{H}_k^{\BM, T(n)}(\cal{Q}_{(1^n)}^\ell)
	\in \bb{Q}(\sf{q}, \sf{t}) \otimes K_0(S_n).
\end{align}
Recall the Frobenius character $\cal{F} : \bb{Q}(\sf{q}, \sf{t}) \otimes K_0(S_n) \to \Lambda_{\sf{q}, \sf{t}}^n$ from \Cref{sec:springer}.

\begin{prop}
	For all integers $n, k > 0$, we have
	\begin{align}
		\cal{F}\tilde{\sf{Q}}_{\SSS, n, nk}^{\BM, T(n)}(\sf{q}, \sf{t})
		= \frac{1}{(1 - \sf{q})(1 - \sf{t}^2)}
		\nabla^k p_n.
	\end{align}
\end{prop}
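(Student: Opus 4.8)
The plan is to read the proposition off from the computation of Carlsson--Mellit \cite{CM21}, transported across the geometric comparison carried out in \Cref{sec:asf}. For $f(x,y) = y^n - x^{nk}$ one has $y^n - x^{nk} = \prod_{\zeta^n = 1}(y - \zeta x^k)$, so the number of branches is $b = n$ and the uniformizing torus is $T(b) = T(n) = \bb{G}_m^n$; this is precisely the constant part of the (split) centralizer torus of the homogeneous regular semisimple element $\gamma$ of slope $k$ — the companion matrix of $T^n - x^{nk}$ — whose $\GL_n$ affine Springer fiber Carlsson--Mellit study. The first input, to be supplied in \Cref{sec:asf}, is an isomorphism identifying $\coprod_\ell \cal{Q}_{(1^n)}^\ell$ with the positive part of that affine Springer fiber, compatibly with (i) the Springer $S_n$-action of \Cref{prop:springer}, (ii) the $\Lattice_{\geq 0}$-grading shifting $\ell$, which on the affine Springer side is the grading by the positive part of the cocharacter lattice, and (iii) the $T(n)$-action rescaling the $\varpi_i$, which matches the torus used in \cite{CM21}. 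The second input is the main theorem of \cite{CM21}, computing the $(\sf{q}, \sf{t})$-graded, $S_n$-equivariant Borel--Moore homology of that ind-scheme and identifying its Frobenius characteristic with $\tfrac{1}{(1 - \sf{q})(1 - \sf{t}^2)}\,\nabla^k p_n$.

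Granting these, the remaining argument is bookkeeping. First, $\coprod_\ell \cal{Q}_{(1^n)}^\ell$ carries a $T(n)$-equivariant affine paving refining the one of \Cref{lem:paving} (with strata indexed by the combinatorial data of \Cref{lem:paving-nest}); hence its $T(n)$-equivariant Borel--Moore homology is free over $\ur{H}_{T(n)}^\ast(\point)$, the associated long exact sequences split, and the weight filtration on each $\ur{H}_k^{\BM, T(n)}(\cal{Q}_{(1^n)}^\ell)$ is pure, so that $\ur{H}_k^{\BM, T(n)}$ already records the weight-graded data. (This purity is also what feeds the implicit verification of \Cref{conj:ors-quot-enhanced-full}(1), and of the trigrading match in part (2), for this $f$.) Consequently $\tilde{\sf{Q}}_{\SSS, n, nk}^{\BM, T(n)}(\sf{q}, \sf{t})$ \emph{is} the bigraded class appearing in \cite{CM21}, once we match conventions: the statistic of \cite{CM21} recording the lattice grading becomes our $\sf{q}$ via the identification $\ell = |\Gamma(\SSS) \setminus \Delta|$ on $\bb{G}_m$-fixed points (as in \cite{gm, gmv}), the homological grading becomes our $\sf{t}^2$ as in \eqref{eq:virtual-weight}, and the loop-rotation and equivariant parameters account for the remaining factors. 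The prefactor $\tfrac{1}{(1 - \sf{q})(1 - \sf{t}^2)}$ then records the single translation direction that survives freely on the positive part of the affine Springer fiber together with the single equivariant variable, the other directions of $\Lattice_{\geq 0}$ and coordinates of $T(n)$ acting through classes already present by equivariant formality.

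The step I expect to be the main obstacle is exactly this reconciliation of normalizations: our Quot description involves the full rank-$n$ monoid $\Lattice_{\geq 0}$ and the full rank-$n$ torus $T(n)$, whereas \cite{CM21} extract their answer with a loop-rotation torus of small rank, so one must check that the cohomological shifts and the extra equivariant and translation variables organize so that the prefactor is \emph{precisely} $\tfrac{1}{(1 - \sf{q})(1 - \sf{t}^2)}$ and not a higher power of $(1 - \sf{q})$ or $(1 - \sf{t}^2)$. Concretely this rests on two facts: rescaling an $\RRR$-submodule of $\SSS$ by a global unit of $\SSS$ leaves it unchanged, so each translate $\varpi^{\vec{x}} \cdot \cal{D}_{(1^n)}$ is genuinely $T(n)$-equivariantly isomorphic to $\cal{D}_{(1^n)}$ and contributes only a shift of $\sf{q}$-degree; and, as in the proof of \Cref{thm:main}, summing over $\vec{x} \in \bb{Z}_{\geq 0}^n$ produces $\tfrac{1}{(1 - \sf{q})^n}$ times the class of the fundamental domain, which must therefore be divisible by $(1 - \sf{q})^{n - 1}$ — a cancellation that \cite{CM21} see directly on the affine Springer side. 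Once these normalizations are pinned down, the proposition follows, and by the discussion of \S\ref{subsec:intro-toric} it yields case (2) of \Cref{thm:ors-quot}.
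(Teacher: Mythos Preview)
Your overall strategy---identify $\coprod_\ell \cal{Q}_{(1^n)}^\ell$ with the positive affine Springer fiber $Z_k$ via \Cref{sec:asf}, then invoke Carlsson--Mellit---is exactly the paper's strategy. But the middle step is not what you think it is, and as a result your justification of the prefactor is wrong.

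The actual statement of \cite[Thm.~A]{CM21} does \emph{not} compute the Springer-equivariant Frobenius character directly. It computes the bigraded character for \emph{two} commuting $S_n$-actions on $\ur{H}_\ast^{\BM,T(n)}(Z_k)$: the Springer action (in an alphabet $\vec{Y}$) and the ``dot'' action permuting the $n$ branches of $y^n=x^{nk}$ (in an alphabet $\vec{Z}$). Their formula is
\[
\cal{F}_{\vec{Y},\vec{Z}}\tilde{\sf{Q}}_{\SSS,n,nk}^{\BM,T(n)}(\sf{q},\sf{t}^{1/2})
=\nabla^k e_n\!\left[\frac{\vec{Y}\vec{Z}}{(1-\sf{q})(1-\sf{t})}\right].
\]
To extract the Springer-only character one must pair with $p_{(1^n)}[\vec{Z}]$ in the Hall inner product (equivalently, evaluate the dot character at the identity of $S_n$). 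Using that power sums are orthogonal for the Macdonald $q,t$-inner product and that $p_{(1^n)}=p_1^n$, this pairing yields
\[
\nabla^k p_{(1^n)}\!\left[\frac{\vec{Y}}{(1-\sf{q})(1-\sf{t})}\right]
=\frac{1}{(1-\sf{q})^n(1-\sf{t})^n}\,\nabla^k p_{(1^n)}[\vec{Y}],
\]
and substituting $\sf{t}\mapsto\sf{t}^2$ finishes. In particular the correct target is $\tfrac{1}{(1-\sf{q})^n(1-\sf{t}^2)^n}\,\nabla^k p_{(1^n)}$ (consistent with the Corollary and with the KhR side in \S\ref{sec:noncoprime}); the displayed statement has a typo in the exponents and in $p_n$ versus $p_{(1^n)}$. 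Your explanation that the prefactor records ``a single translation direction\ldots together with a single equivariant variable'' is therefore incorrect: all $n$ translation directions and all $n$ equivariant coordinates contribute, and they enter via the plethystic substitution above, not via any cancellation of the sort you describe.

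Two smaller points. The paving you invoke is \Cref{lem:paving}, which is for the \emph{coprime} case with a single-branch $\bb{G}_m$-action; for $y^n=x^{nk}$ the relevant purity and equivariant formality come from \cite{gkm_04,gkm} as cited in the paper, not from that lemma. And your $\gamma$ as companion matrix is conjugate to the paper's diagonal $\gamma(k)=\mathrm{diag}(x^k,\zeta x^k,\ldots,\zeta^{n-1}x^k)$, which is harmless but worth aligning with \Cref{prop:asf-noncoprime}.
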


\begin{proof}
	Just as the ind-schemes $\CptPic_\nu$ are isomorphic to parabolic affine Springer fibers for $\GL_n$, so the ind-schemes $\coprod_\ell \cal{Q}_\nu^\ell$ are isomorphic to the \dfemph{positive} parts of certain affine Springer fibers, in the terminology of \cite{GK, CM21}.
	This can be shown by adapting the proof of \cite[Thm.\@ 1.1]{GK}.
	In \Cref{prop:asf-noncoprime}, we give the explicit isomorphisms for the case where $f(x, y) = y^n - x^{nk}$, and show that for $\nu = (1^n)$, they match the Springer actions on the two sides.
	In particular, we match $\coprod_\ell \cal{Q}_{(1^n)}^\ell$ for this choice of $f$ with the ind-scheme denoted $Z_k$ in \cite{CM21}.
	
	There is an extra $S_n$-action on the $T(n)$-equivariant Borel--Moore homology of $Z_k$ called the \dfemph{dot action}, induced by the $S_n$-action on the homotopy type of the curve $y^n = x^{kn}$ that permutes its branches.
	The dot action commutes with the Springer action.
	In this way, we can upgrade $\cal{F}\tilde{\sf{Q}}_{\SSS, n, nk}^{\BM, T(n)}(\sf{q}, \sf{t})$ to an element
	\begin{align}
		\cal{F}_{\vec{Y}, \vec{Z}}\tilde{\sf{Q}}_{\SSS, n, nk}^{\BM, T(n)}(\sf{q}, \sf{t})
		\in \Lambda_{\sf{q}, \sf{t}}^n[\vec{Y}, \vec{Z}],
	\end{align}
	where $\Lambda_{\sf{q}, \sf{t}}[\vec{Y}, \vec{Z}] = \Lambda_{\sf{q}, \sf{t}}[\vec{Y}] \otimes_{\bb{Q}(\sf{q}, \sf{t})} \Lambda_{\sf{q}, \sf{t}}[\vec{Z}]$.
	Above, $\vec{Y}$ and $\vec{Z}$ respectively record the Springer and dot actions.
	The actual statement of \cite[Thm.\@ A]{CM21} is
	\begin{align}
		\cal{F}_{\vec{Y}, \vec{Z}}\tilde{\sf{Q}}_{\SSS, n, nk}^{\BM, T(n)}(\sf{q}, \sf{t}^{\frac{1}{2}}) 
		= \nabla^k e_n\left[\frac{\vec{Y} \vec{Z}}{(1 - \sf{q})(1 - \sf{t})}\right],
	\end{align}
	in plethystic notation.
	
	We want to recover the Frobenius character in $\vec{Y}$ alone.
	To this end, it suffices to pair the right-hand side with $p_{(1^n)}[\vec{Z}]$ under the Hall inner product:
	Indeed, under $\cal{F}$, pairing with $p_{(1^n)}$ corresponds to evaluating a character of $S_n$ at the identity element.
	Note that $(g, h) \mapsto \langle g[\frac{\vec{Y}\vec{Z}}{(1 - \sf{q})(1 - \sf{t})}], h\rangle$ is a version of Macdonald's $\sf{q}, \sf{t}$-inner product \cite[\S{3.5}]{haiman}, with respect to which the power-sum symmetric functions form an orthogonal basis of $\Lambda_{\sf{q}, \sf{t}}^n$.
	Therefore
	\begin{align}
		\left\langle \nabla^k
		e_n \left[\frac{\vec{Y} \vec{Z}}{(1 - \sf{q})(1 - \sf{t})}\right],
		p_{(1^n)}[\vec{Z}]\right\rangle
		&= \nabla^k p_{(1^n)} \left[\frac{\vec{Y}}{(1 - \sf{q})(1 - \sf{t})}\right]\\
		&= \frac{1}{(1 - \sf{q})^n (1 - \sf{t})^n} 
		\nabla^k p_{(1^n)}[\vec{Y}],
	\end{align}
	where the second equality used $p_{(1^n)} = p_1^n$.
	Finally, substituting $\sf{t}^2$ for $\sf{t}$ everywhere gives the statement in the proposition.
\end{proof}

\begin{rem}
	Interestingly, the fundamental domain $\cal{D}_{(1^n)}$ from Lemma \ref{lem:domain} and its ensuing discussion appears implicitly in \cite{CM21}:
	Its complement is an open sub-ind-scheme of $Z_k$ that features heavily in the proof of \cite[Thm.\@ A]{CM21}.
\end{rem}

\begin{cor}
	For all integers $n, k > 0$, we have
	\begin{align}
		\cal{F}\sf{Quot}_{n, nk}(\sf{q}, \sf{t})
		= \frac{1}{(1 - \sf{q})^n} 
		\nabla^k p_{(1^n)}.
	\end{align}
\end{cor}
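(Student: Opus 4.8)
The plan is to deduce the corollary from the preceding proposition by passing from the $T(n)$-equivariant Borel--Moore homology computed there to the non-equivariant, weight-graded invariant that defines $\cal{F}\sf{Quot}_{n, nk}$. The key structural input is that, for $f(x, y) = y^n - x^{nk}$, the schemes $\cal{Q}_{(1^n)}^\ell$ carry affine pavings. This is built into \Cref{prop:asf-noncoprime}, which identifies $\coprod_\ell\cal{Q}_{(1^n)}^\ell$ with the ind-scheme $Z_k$ of \cite{CM21}, itself paved by affine cells indexed by torus-fixed points; one can also see it directly, as in \Cref{lem:paving}: the $T(n)$-fixed points of $\cal{Q}_{(1^n)}^\ell$ are finite --- the $T(n)$-fixed $\SSS$-submodules being the monomial ones --- and a generic one-parameter subgroup $\bb{G}_m \subseteq T(n)$ contracts $\cal{Q}_{(1^n)}^\ell$ onto them along affine cells. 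Two consequences follow. First, $\ur{H}_c^\ast(\cal{Q}_{(1^n)}^\ell)$ is pure and supported in even degrees, so that $\chi(\cal{Q}_{(1^n)}^\ell, \sf{t})$ is, up to the grading conventions discussed below, the Poincar\'e series of $\ur{H}_\ast^\BM(\cal{Q}_{(1^n)}^\ell)$. Second, $\cal{Q}_{(1^n)}^\ell$ is $T(n)$-equivariantly formal, so $\ur{H}_\ast^{\BM, T(n)}(\cal{Q}_{(1^n)}^\ell)$ is a free module over $\ur{H}_{T(n)}^\ast(\point) = \bb{C}[z_1, \ldots, z_n]$ whose generators reduce modulo $(z_1, \ldots, z_n)$ to a basis of $\ur{H}_\ast^\BM(\cal{Q}_{(1^n)}^\ell)$.

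Taking Frobenius characters, summing over $\ell$, and using that the reduction $z_i \mapsto 0$ commutes with the Springer $S_n$-action of \Cref{prop:springer}, freeness over $\bb{C}[z_1, \ldots, z_n]$ yields an identity of the shape
\begin{align}
	\cal{F}\tilde{\sf{Q}}_{\SSS, n, nk}^{\BM, T(n)}(\sf{q}, \sf{t})
	= \frac{1}{(1 - \sf{t}^2)^n}\,\cal{F}\sf{Quot}_{n, nk}(\sf{q}, \sf{t}),
\end{align}
each of the $n$ equivariant generators $z_i$ (of cohomological degree $2$) contributing one factor $(1 - \sf{t}^2)^{-1}$, once the homological grading on $\ur{H}_\ast^\BM$ is matched with the weight grading through the duality $\ur{H}_c^i(X)^\vee \simeq \ur{H}_{-i}^\BM(X)$ of \Cref{rem:borel-moore}. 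Substituting the value of $\cal{F}\tilde{\sf{Q}}_{\SSS, n, nk}^{\BM, T(n)}$ from the preceding proposition and cancelling the factor $(1 - \sf{t}^2)^{-n}$ leaves exactly $\cal{F}\sf{Quot}_{n, nk}(\sf{q}, \sf{t}) = \frac{1}{(1 - \sf{q})^n}\nabla^k p_{(1^n)}$. Alternatively, one could route this through \eqref{eq:thm-main}, reducing the corollary to the statement $\cal{F}\sf{Pic}_{n, nk}(\sf{q}, \sf{t}) = \nabla^k p_{(1^n)}$ on the Picard side; but going through the proposition is more direct.

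I expect the genuinely delicate step to be the grading bookkeeping behind the displayed identity: one must reconcile the normalization of $\cal{F}\tilde{\sf{Q}}_{\SSS, n, nk}^{\BM, T(n)}$ inherited from \cite{CM21} --- including their $\sf{t} \mapsto \sf{t}^{1/2}$ rescaling, the placement of the generators $z_i$, and any overall homological shift --- with the normalization of the virtual weight polynomials defining $\cal{F}\sf{Quot}$, so that stripping off $\bb{C}[z_1, \ldots, z_n]$ contributes precisely the factor $(1 - \sf{t}^2)^{-n}$, with no leftover monomial in $\sf{t}$ or sign. The remaining ingredients --- $T(n)$-equivariant formality and freeness over $\bb{C}[z_1, \ldots, z_n]$, as opposed to just the existence of the paving --- are standard consequences of the $T(n)$-equivariant refinement of the paving together with the localization theorem, and present no difficulty once the paving is in place.
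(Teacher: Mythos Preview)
Your proposal is correct and takes essentially the same approach as the paper: equivariant formality (which the paper deduces from purity via \cite{gkm_04, gkm} rather than directly from the paving, though the paving is what underlies purity) lets one strip a factor of $(1 - \sf{t}^2)^n$ from the equivariant series, and then purity plus even-degree support identifies the resulting non-equivariant Borel--Moore Poincar\'e series with the virtual weight polynomial defining $\cal{F}\sf{Quot}_{n, nk}$. Your caveat about the grading bookkeeping is well placed, but the paper handles it exactly as you outline.
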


\begin{proof}
	Since the homology of $Z_k$ is pure \cite{gkm_04, gkm}, it is $T(n)$-equivariantly formal \cite[Lem.\@ 2.2]{gkm_04}.
	We deduce that if $\sf{Q}_{S, n, nk}^\BM$ is the analogue of $\tilde{\sf{Q}}_{\SSS, n, nk}^{\BM, T(n)}$ for non-equivariant Borel--Moore homology, then
	\begin{align}
		\cal{F}\sf{Q}_{\SSS, n, nk}^\BM(\sf{q}, \sf{t})
		= (1 - \sf{t}^2)^n
		\cal{F}\tilde{\sf{Q}}_{\SSS, n, nk}^{\BM, T(n)}(\sf{q}, \sf{t}) 
		=  \frac{1}{(1 - \sf{q})^n} 
		\nabla^k p_{(1^n)}.
	\end{align}
	Next, recall that Borel--Moore homology and compactly-supported cohomology with complex coefficients are dual to each other.
	Finally, since both are supported in even degrees \cite[38]{CM21}, and in degree $i$, pure of weight $i$ \cite[Cor.\@ 1.3]{gkm}, we know that $\sum_i \sf{t}^i \dim \ur{H}_c^i(Z_k) = \chi(Z_k, \sf{t})$.
\end{proof}

\subsection{}

Turning to the KhR side, observe that Gorsky--Hogancamp computed the $y$-ified KhR homology of the $(n, nk)$ torus link in \cite{GH17}, obtaining its usual KhR homology as a corollary.
After including the correct denominators, \cite[Thm.\@ 7.13]{GH17} says
\begin{align}
	\bar{\sf{Y}}_{n, nk}(\sf{a}, \sf{q}, \sf{t})
	\vcentcolon= \sum_{i, j, k} \sf{a}^i \sf{q}^{\frac{j}{2}} \sf{t}^{\frac{k}{2}}
	\dim(\bar{\sf{Y}}_f^{i, \frac{j}{2}, \frac{k}{2}})
	= \frac{1}{(1 - \sf{q})^n (1 - \sf{t})^n} \Psi(\nabla^k p_{(1^n)}, \sf{a}).
\end{align}
Similarly, after correction, \cite[Thm.\@ 7.14]{GH17} says
\begin{align}
	\bar{\sf{X}}_{n, nk}(\sf{a}, \sf{q}, \sf{t})
	= (1 - \sf{t})^n \bar{\sf{Y}}_{n, nk}(\sf{a}, \sf{q}, \sf{t})
	= \frac{1}{(1 - \sf{q})^n} \Psi(\nabla^k p_{(1^n)}, \sf{a}).
\end{align}
Again, we refer to \Cref{sec:polynomial} and \Cref{sec:conventions} to match our grading conventions with those in \cite{GH17}.
This concludes the proof of case (2) of \Cref{thm:ors-quot}.

\subsection{}

To conclude this section, we verify the $\sf{a} = 0$ limit of \cite[Conj.\ 2]{ors} for two plane curve germs of the form $y^n = x^{nk}$.
By way of case (2) of \Cref{thm:ors-quot}, this also verifies \Cref{conj:main} in these cases.

\begin{ex}
	Take $n = 2$ and $k = 2$.
	By \cite[Ex.\ 6.18]{Kiv20},
	\begin{align}
		\sf{Hilb}(\sf{q}, \sf{t})
		= \frac{1}{(1 - \sf{q})^2}
		(1 - \sf{q} + \sf{q}^2 \sf{t}^2 - \sf{q}^3 \sf{t}^2 + \sf{q}^4 \sf{t}^4).
	\end{align}
	At the same time, the recursion of \cite{hm, gmv} gives
	\begin{align}
		\bar{\sf{X}}_{2, 4}(\sf{a}, \sf{q}, \sf{t})
		= \frac{1}{(1 - \sf{q})^2}
		(1 + \sf{q}(\sf{t} - 1) + \sf{q}^2 (\sf{t}^2 - \sf{t})).
	\end{align}
	These series agree under $(\sf{q}, \sf{t}) \mapsto (\sf{q}, \sf{q}\sf{t}^2)$.
\end{ex}

\begin{ex}
	Take $n = 3$ and $k = 1$.
	By \cite[Ex.\ 6.17]{Kiv20},
	\begin{align}
		\sf{Hilb}(\sf{q}, \sf{t})
		=	\frac{1}{(1 - \sf{q})^3}
		\pa{\begin{array}{l}
				1 
				- 2\sf{q}
				+ \sf{q}^2 (\sf{t}^2 + 1)
				+ \sf{q}^3 (\sf{t}^4 - 2\sf{t}^2)
				+ \sf{q}^4 (\sf{t}^4 + \sf{t}^2)\\
				\qquad- 2 \sf{q}^5 \sf{t}^4
				+ \sf{q}^6 \sf{t}^6
		\end{array}}.
	\end{align}
	At the same time, by \cite[Ex.\ 32]{gmv},
	\begin{align}
		\bar{\sf{X}}_{3, 3}(\sf{a}, \sf{q}, \sf{t})
		= \frac{1 + \sf{q}\sf{t}}{1 - \sf{q}}
		+ \frac{\sf{q} \sf{t}^2 + 2 \sf{q}^2 \sf{t}^2}{(1 - \sf{q})^2}
		+ \frac{\sf{q}^3 \sf{t}^3}{(1 - \sf{q})^3}.
	\end{align}
	Again these agree under $(\sf{q}, \sf{t}) \mapsto (\sf{q}, \sf{q}\sf{t}^2)$.
\end{ex}

\section{Affine Springer Fibers}\label{sec:asf}

\subsection{}

In this section, we establish the comparisons to affine Springer fibers needed in \Crefrange{sec:coprime}{sec:noncoprime}.
For the general relationship between local compactified Jacobians and affine Springer fibers, see \cite{laumon}.

\subsection{}

Suppose that $G$ is a complex reductive algebraic group.
Its \dfemph{loop group} $\hat{G}$ (resp. \dfemph{arc group} $\hat{K}$)  is defined by  $\hat{G}(A) = G(A(\!(x)\!))$ (resp. $\hat{K}(A) = G(A[\![x]\!])$) for all $\bb{C}$-algebras $A$.
Thus there is a projection map $\hat{K} \to G$ that sends $g(x) \mapsto g(0)$.

Henceforth, let $G = \GL_n$ and $\fr{g} = \fr{gl}_n$.
Each integer composition $\nu$ of $n$ defines a block-upper-triangular parabolic subgroup $P_\nu \subseteq G$.
Its preimage $\hat{K}_\nu \subseteq \hat{K}$ is called the corresponding \dfemph{parahoric subgroup}.
The \dfemph{partial affine flag variety of $G$ of parabolic type} $\nu$ is the fpqc quotient $\hat{\cal{B}}_\nu = \hat{G}/\hat{K}_\nu$, which turns out to be an ind-scheme.
For any $\gamma \in \fr{g}(\bb{C}[\![x]\!])$, let
\begin{align}
	\hat{\cal{B}}_{\nu, \unred}^\gamma = \{g\hat{K}_\nu \in \hat{\cal{B}}_\nu \mid \Ad(g^{-1})\gamma \in \ur{Lie}(\hat{K}_\nu)\}.
\end{align}
The underlying reduced ind-scheme $\hat{\cal{B}}_\nu^\gamma \subseteq \hat{\cal{B}}_{\nu, \unred}^\gamma$ is called the \dfemph{affine Springer fiber} over $\gamma$ \dfemph{of parabolic type} $\nu$.
Since $\hat{G}/\hat{K}$ is also known as the \dfemph{affine Grassmannian}, we set $\Grass = \hat{G}/\hat{K} = \hat{\cal{B}}_{(n)}$ and $\Grass^\gamma = \hat{\cal{B}}_{(n)}^\gamma$.

\subsection{The Functor $\mathcal{L}$, and $y^n = x^{nk}$}\label{subsec:asf-noncoprime}

There is a well-known lattice description of the above spaces.
Namely, let $\cal{L}$ be the functor from $\bb{C}$-algebras to sets defined by
\begin{align}
	\cal{L}(A) = \left\{
	\begin{array}{l}
		\text{$A[\![x]\!]$-submodules}\\
		L \subseteq A(\!(x)\!)^n
	\end{array}
	\middle|
	\begin{array}{l}
		\text{$\exists\, i$ such that $x^i A[\![x]\!]^n
			\subseteq L \subseteq x^{-i} A[\![x]\!]^n$}\\
		\text{and $(x^{-i} A[\![x]\!]^n)/L$ is locally free over $A$}\\
		\text{of finite rank}
	\end{array}
	\right\}
\end{align}
for any $\bb{C}$-algebra $A$.
For any $\nu$, let $\cal{L}_\nu$ be the functor defined by
\begin{align}
	\cal{L}_\nu(A) = \left\{(L, F) \,\middle| 
	\begin{array}{l}
		L \in \cal{L}(A),\\
		\text{$F$ is a partial flag on $\bar{L} \vcentcolon= L/x L$ of type $\nu$}
	\end{array}\right\}.
\end{align}
Let $F^\std$ be the unique partial flag on $\bb{C}^n$ of type $\nu$ that has stabilizer $P_\nu$ under \emph{right} multiplication (of row vectors) by $\GL_n(\bb{C})$.
\begin{lem}\label{lem:l}
	For each integer composition $\nu$ of $n$, there is an isomorphism of fpqc sheaves $\hat{\cal{B}}_\nu \xrightarrow{\sim} \cal{L}_\nu$ that sends
	\begin{align}\label{eq:affine-flag-to-l}
		g\hat{K}_\nu \mapsto (L_g, F_g) \vcentcolon= (\bb{C}[\![x]\!]^n \cdot g^{-1}, F^\std \cdot g^{-1})
	\end{align}
	for all $g\hat{K}_\nu \in \hat{\cal{B}}_\nu(\bb{C})$.
	In particular, $\cal{L}$ is representable by an ind-scheme.
\end{lem}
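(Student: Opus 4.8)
The plan is to establish the standard lattice model of the affine partial flag variety in the present conventions. First I would promote the formula \eqref{eq:affine-flag-to-l} to a natural transformation $\hat{G} \to \cal{L}_\nu$: for $g \in \hat{G}(A) = \GL_n(A(\!(x)\!))$, right multiplication by $g^{-1}$ is an $A(\!(x)\!)$-linear automorphism of $A(\!(x)\!)^n$ carrying $A[\![x]\!]^n$ isomorphically onto $L_g \vcentcolon= A[\![x]\!]^n g^{-1}$, hence inducing an $A$-linear isomorphism $A[\![x]\!]^n/xA[\![x]\!]^n \xrightarrow{\sim} \bar{L}_g \vcentcolon= L_g/xL_g$, and $F_g \vcentcolon= F^\std \cdot g^{-1}$ is by definition the image of $F^\std$ under this isomorphism. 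Choosing $i \geq 0$ with $x^i g$ and $x^i g^{-1}$ entrywise in $A[\![x]\!]$ shows $x^i A[\![x]\!]^n \subseteq L_g \subseteq x^{-i} A[\![x]\!]^n$, and the routine lattice--Grassmannian check then gives $(L_g, F_g) \in \cal{L}_\nu(A)$. Next I would verify invariance under right multiplication by the parahoric $\hat{K}_\nu$: for $k \in \hat{K}_\nu(A)$ the matrix $k^{-1}$ lies over $A[\![x]\!]$, so $L_{gk} = L_g$, while $k(0) \in P_\nu(A)$ fixes $F^\std$ since $P_\nu$ is the stabilizer of $F^\std$, so, using $(\cdot\,(gk)^{-1}) = (\cdot\,g^{-1}) \circ (\cdot\,k^{-1})$, also $F_{gk} = F_g$. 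Hence the transformation descends to a morphism of fpqc sheaves $\Phi_\nu \colon \hat{\cal{B}}_\nu \to \cal{L}_\nu$ realizing \eqref{eq:affine-flag-to-l}.

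It then remains to show $\Phi_\nu$ is an isomorphism, and I would first reduce to the case $\nu = (n)$. The projection $\hat{\cal{B}}_\nu \to \hat{\cal{B}}_{(n)} = \Grass$ is a Zariski-locally trivial bundle with fiber the partial flag variety $\cal{B}_\nu$, and likewise $\cal{L}_\nu \to \cal{L}$ is the bundle whose fiber over $L$ is the scheme of type-$\nu$ flags on the locally free rank-$n$ module $\bar{L}$, again a Zariski-locally trivial $\cal{B}_\nu$-bundle; the morphism $\Phi_\nu$ intertwines these projections and is the identity on fibers under the trivializations induced by $\bar{L}_g$, so it is an isomorphism if and only if $\Phi_{(n)}$ is. For $\nu = (n)$, write $\Phi = \Phi_{(n)} \colon \Grass \to \cal{L}$, $g\hat{K} \mapsto A[\![x]\!]^n g^{-1}$. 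Injectivity I would check at the level of presheaf quotients: if $A[\![x]\!]^n g^{-1} = A[\![x]\!]^n h^{-1}$ then $g^{-1}h$ stabilizes the standard lattice, hence lies in $\GL_n(A[\![x]\!]) = \hat{K}(A)$, so $g\hat{K} = h\hat{K}$. For surjectivity I would argue fpqc-locally: given $L \in \cal{L}(A)$, the condition that $L$ is trapped between two lattices with locally free $A$-quotients implies that $L$ is a finitely generated projective $A[\![x]\!]$-module of rank $n$, hence free after a faithfully flat base change $A \to A'$; extending an $A'[\![x]\!]$-basis to an $A'(\!(x)\!)$-basis of $A'(\!(x)\!)^n$ yields $g \in \GL_n(A'(\!(x)\!))$ with $A'[\![x]\!]^n g^{-1} = L \otimes_A A'$. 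Finally, since $\hat{\cal{B}}_\nu$ is an ind-scheme, the isomorphism $\Phi_\nu$ transports that structure to $\cal{L}_\nu$; in particular $\cal{L} = \cal{L}_{(n)}$ is an ind-scheme, as can also be seen directly by writing $\cal{L} = \varinjlim_i \cal{L}_{\leq i}$ with $\cal{L}_{\leq i}$ a closed subscheme, cut out by stability under multiplication by $x$, of a finite disjoint union of Grassmannians of $(x^{-i}A[\![x]\!]^n)/(x^i A[\![x]\!]^n)$.

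The main obstacle I anticipate is the surjectivity step, concretely the claim that a lattice $L$ is fpqc-locally free over $A[\![x]\!]$: because $A[\![x]\!]$ need not be Noetherian, one cannot argue naively and must instead work with the exhaustion by the finite free $A$-modules $(x^{-i}A[\![x]\!]^n)/(x^i A[\![x]\!]^n)$, using that a locally free submodule with locally free quotient of a finite free module is a direct summand, and bootstrap from there to projectivity of $L$ over $A[\![x]\!]$. A secondary point needing care is the left-versus-right bookkeeping in $F_g = F^\std \cdot g^{-1}$ and in the $\hat{K}_\nu$-invariance check, so that the reduction of the flag modulo $x$ is genuinely well defined and compatible with the convention that $P_\nu$ is the right stabilizer of $F^\std$.
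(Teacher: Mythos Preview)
The paper does not give a proof of this lemma: it is stated immediately after the sentence ``There is a well-known lattice description of the above spaces'' and is treated as standard, with no argument supplied. Your proposal is a correct and entirely standard way to fill in that gap --- define the map on $A$-points, verify $\hat{K}_\nu$-invariance, reduce to $\nu = (n)$ via the compatible $\cal{B}_\nu$-bundle structures, and then show the presheaf map $\hat{G}(A)/\hat{K}(A) \to \cal{L}(A)$ is injective and fpqc-locally surjective, so that $\cal{L}$ is the fpqc sheafification. Your flagging of the non-Noetherian issue in the local freeness of $L$ over $A[\![x]\!]$ is the one genuinely delicate point; it is typically handled exactly as you outline, or by citing a standard reference on the lattice model of the affine Grassmannian.
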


Let $\cal{L}_+ \subseteq \cal{L}$ be the sub-ind-scheme defined by
$
\cal{L}_+(A) = \{L \in \cal{L}(A) \mid L \subseteq A[\![x]\!]^n \}.
$
We define the \dfemph{positive part} of $\hat{\cal{B}}_\nu$ to be the corresponding sub-ind-scheme $\hat{\cal{B}}_{\nu, +} \subseteq \hat{\cal{B}}_\nu$.
Similarly, we define the \dfemph{positive part} of $\hat{\cal{B}}_\nu^\gamma$ to be $\hat{\cal{B}}_{\nu, +}^\gamma = \hat{\cal{B}}_\nu^\gamma \cap \hat{\cal{B}}_{\nu, +}$.
We set $\Grass_+ = \hat{\cal{B}}_{(n), +}$ and $\Grass_+^\gamma = \hat{\cal{B}}_{(n), +}^\gamma$.

Fix a primitive $n$th root of unity $\zeta \in \bb{C}^\times$.
For any integer $k > 0$, let 
\begin{align}
	\gamma(k) = \diag(x^k, \zeta x^k, \ldots, \zeta^{n - 1} x^k) \in \fr{g}(\bb{C}[\![x]\!]).
\end{align}
We see that the centralizer of $\gamma(k)$ in $\hat{G}$ is precisely $\hat{T} \subseteq \hat{G}$, where $T \subseteq G$ is the maximal torus of diagonal matrices.
The $\hat{T}$-action on $\hat{\cal{B}}_\nu^{\gamma(k)}$ by left multiplication restricts to a $T$-action on $\hat{\cal{B}}_{\nu, +}^{\gamma(k)}$.
We note that the ind-scheme $\hat{\cal{B}}_{(1^n), +}^{\gamma(k)}$ is denoted $Z_k$ in \cite{CM21}.

\begin{prop}\label{prop:asf-noncoprime}
	Suppose that $\RRR = \bb{C}[\![x, y]\!]/(y^n - x^{nk})$ for some integer $k > 0$.
	Fix an identification $\SSS = \bb{C}[\![x]\!]^n$, hence an identification $T(n) = T$.
	Then:
	\begin{enumerate}
		\item 	The map \eqref{eq:affine-flag-to-l} restricts to isomorphisms $\hat{\cal{B}}_\nu^{\gamma(k)} \xrightarrow{\sim} \CptPic_\nu$ and $\hat{\cal{B}}_{\nu, +}^{\gamma(k)} \xrightarrow{\sim} \coprod_\ell \cal{Q}_\nu^\ell$.
		
	\end{enumerate}
	Let $\hat{\cal{B}}_{\nu, +}^{\gamma(k), \ell} \subseteq \hat{\cal{B}}_{\nu, +}^{\gamma(k)}$ correspond to $\cal{Q}_\nu^\ell \subseteq \coprod_\ell \cal{Q}_\nu^\ell$ under the isomorphism in (1).
	Then the isomorphism matches:
	\begin{enumerate}\setcounter{enumi}{1}
		\item 	The $T$-actions on $\hat{\cal{B}}_{\nu, +}^{\gamma(k), \ell}$ and $\cal{Q}_\nu^\ell$.
		
		\item 	The Springer actions of $S_n$ on the $T$-equivariant Borel--Moore homologies of $\cal{B}_{(1^n), +}^{\gamma(k), \ell}$ and $\cal{Q}_{(1^n)}^\ell$, for all $\ell$.
		
	\end{enumerate}
\end{prop}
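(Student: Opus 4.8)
The plan is to write down the isomorphism explicitly through the lattice dictionary of \Cref{lem:l}, adapting the argument of \cite[Thm.\@ 1.1]{GK}. The key algebraic input I would isolate is the following: under the identification $\SSS = \bb{C}[\![x]\!]^n$, multiplication by $y$ on $\SSS$ becomes the constant matrix $\gamma(k) = \diag(x^k, \zeta x^k, \ldots, \zeta^{n - 1} x^k)$; since $\gamma(k)^n$ equals $x^{nk}$ times the identity and $y^n - x^{nk}$ is the characteristic polynomial of $\gamma(k)$, the subalgebra $\bb{C}[\![x]\!][\gamma(k)] \subseteq \fr{g}(\bb{C}(\!(x)\!))$ is free of rank $n$ over $\bb{C}[\![x]\!]$ and canonically isomorphic to $\RRR = \bb{C}[\![x]\!][y]/(y^n - x^{nk})$, with $x \in \RRR$ acting on $K = \bb{C}(\!(x)\!)^n$ by the scalar $x$. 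So for a $\bb{C}[\![x]\!]$-lattice $L \subseteq \bb{C}(\!(x)\!)^n$, the affine-Springer condition ``$\Ad(g^{-1})\gamma(k) \in \ur{Lie}(\hat{K}_{(n)}) = \fr{g}(\bb{C}[\![x]\!])$'', i.e.\@ $\gamma(k) L \subseteq L$, is equivalent to ``$L$ is an $\RRR$-submodule of $K$'', and the remaining lattice conditions defining $\cal{L}$ coincide with those defining $\CptPic$. For general parabolic type $\nu$, the extra part of the condition $\Ad(g^{-1})\gamma(k) \in \ur{Lie}(\hat{K}_\nu)$, namely that $\gamma(k)$ preserve the partial flag on $\bar{L}$, is exactly the requirement that this flag be $y$-stable, which is the additional datum recorded by $\CptPic_\nu$. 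I would then spell this out over an arbitrary test algebra $A$, after identifying $A \hatotimes \RRR = A[\![x]\!][y]/(y^n - x^{nk})$ and $A \hatotimes \SSS = A[\![x]\!]^n$ — legitimate since the $\fr{m}_\RRR$-adic and $x$-adic topologies on $\RRR$ agree, as $\fr{m}_\RRR^n \subseteq (x) \subseteq \fr{m}_\RRR$ — to obtain an isomorphism of unreduced functors, hence of the underlying reduced ind-schemes, $\hat{\cal{B}}_\nu^{\gamma(k)} \xrightarrow{\sim} \CptPic_\nu$, given on points by $g\hat{K}_\nu \mapsto (L_g, F_g)$. Intersecting with the positive part $L \subseteq A[\![x]\!]^n$ picks out precisely the $\RRR$-submodules of $\SSS$, graded by $\bb{C}$-codimension, which is $\coprod_\ell \cal{Q}_\nu^\ell$. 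This is part (1).

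Part (2) I expect to be only a check of conventions: under $\SSS = \bb{C}[\![x]\!]^n$, the torus $T(n)$ acts on $\coprod_\ell \cal{Q}_\nu^\ell$ by rescaling the $n$ coordinates of $K = \bb{C}(\!(x)\!)^n$, which is the left-multiplication action on $\hat{\cal{B}}_\nu$ of the diagonal torus $T \subseteq \GL_n = G$ — the intersection with $G$ of the centralizer of $\gamma(k)$ in $\hat{G}$. As $T$ and $\gamma(k)$ are both diagonal they commute, so the action preserves $\hat{\cal{B}}_\nu^{\gamma(k)}$ and its positive part, and under \eqref{eq:affine-flag-to-l} it visibly matches the $T(n)$-action once we set $T(n) = T$.

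For (3) I would argue as follows. By construction $\hat{\cal{B}}_\nu^{\gamma(k)}$ is the pullback of $\hat{\cal{B}}_{(n)}^{\gamma(k)} = \Grass^{\gamma(k)}$ along $\pi_\nu$ with respect to the map $p' \colon L \mapsto (\bar{L}, \gamma(k)|_{\bar{L}})$ into $[\cal{N}/{\GL_n}]$ — which lands in the nilpotent cone because $\gamma(k)^n$ acts as $x^{nk}$, hence as zero, on $\bar{L}$ — and by \Cref{prop:springer} this $p'$ induces the affine Springer $S_n$-action on the $T$-equivariant Borel--Moore homology of $\hat{\cal{B}}_{(1^n), +}^{\gamma(k), \ell}$, via pullback of the $\GL_n$-equivariant Springer sheaf $\cal{S}_{(1^n)}$. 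On the Quot side the corresponding map is $p \colon M \mapsto (\bar{M}, y|_{\bar{M}})$. Since the isomorphism of (1) identifies the underlying lattices, $L = M$, and since $x \in \RRR$ acts by the scalar $x$ (so $xL = xM$) while $y$ acts as $\gamma(k)$, we get $\bar{L} = \bar{M}$ and $\gamma(k)|_{\bar{L}} = y|_{\bar{M}}$; hence $p = p'$. Therefore the two Springer sheaves, and their actions of $\End(\cal{S}_{(1^n)}) = \bb{C}S_n$, are identified, and taking $T$-equivariant hypercohomology gives (3). In particular this would identify $\coprod_\ell \cal{Q}_{(1^n)}^\ell$ with the ind-scheme $Z_k$ of \cite{CM21}, compatibly with the Springer action used there.

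The main obstacle I anticipate is bookkeeping rather than conceptual. One must reconcile the loop-group and lattice conventions carefully (whether $g$ acts on the left or the right, and $g$ versus $g^{-1}$) so that \eqref{eq:affine-flag-to-l} intertwines the $\RRR$-module structure on $L_g$ with the $\gamma(k)$-stability condition \emph{on the nose}, and over an arbitrary $A$ and not only on $\bb{C}$-points; and one must confirm that the ``Springer action'' used in \cite{CM21} on $Z_k$ genuinely coincides with the one produced by \Cref{prop:springer} (as opposed to a combinatorially or GKM-theoretically defined variant) — which is exactly what the identity $p = p'$ is meant to secure.
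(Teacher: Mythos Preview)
Your proposal is correct and follows essentially the same route as the paper: parts (1)--(2) are unwound from the definitions exactly as you indicate (the paper simply says ``follow from the definitions: compare to \cite[Thm.\@ 1.1]{GK}''), and for part (3) the paper packages your identity $p = p'$ into a pair of cartesian squares
\begin{equation*}
\begin{tikzpicture}[baseline=(current bounding box.center), >=stealth]
\matrix(m)[matrix of math nodes, row sep=2.5em, column sep=3em, text height=2ex, text depth=0.5ex]
{
{[\hat{\cal{B}}_{(1^n), +}^{\gamma(k), \ell}/T]}
&{[\cal{Q}_{(1^n)}^\ell/T]}
&{[\widetilde{\cal{N}}_{(1^n)}/{\GL_n}]}\\
{[\Grass_+^{\gamma(k), \ell}/T]}
&{[\cal{Q}^\ell/T]}
&{[\cal{N}/{\GL_n}]}\\
};
\path[->, font=\scriptsize, auto]
(m-1-1) edge node{$\sim$} (m-1-2)
(m-2-1) edge node{$\sim$} (m-2-2)
(m-1-1) edge (m-2-1)
(m-1-2) edge (m-2-2)
(m-1-2) edge (m-1-3)
(m-2-2) edge (m-2-3)
(m-1-3) edge node{$\pi$} (m-2-3);
\end{tikzpicture}
\end{equation*}
whose outer rectangle is your map $p' = p$; your argument that $\gamma(k)|_{\bar L} = y|_{\bar M}$ is precisely what makes the right-hand square commute.
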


\begin{proof}
	Parts (i) and (ii) follow from the definitions:
	Compare to \cite[Thm.\@ 1.1]{GK}.
	To prove part (iii), observe that the usual Springer action on the Borel--Moore homology of $\hat{\cal{B}}_{(1^n), +}^{\gamma(k), \ell}$ arises from \Cref{prop:springer} and \Cref{rem:borel-moore} via the outer rectangle in the following diagram, where every square is cartesian:
	\begin{equation}
		\begin{tikzpicture}[baseline=(current bounding box.center), >=stealth]
			\matrix(m)[matrix of math nodes, row sep=2.5em, column sep=3em, text height=2ex, text depth=0.5ex]
			{ 		
				{[\hat{\cal{B}}_{(1^n), +}^{\gamma(k), \ell}/T]}
				&{[\cal{Q}_{(1^n)}^\ell/T]}
				&{[\widetilde{\cal{N}}_{(1^n)}/{\GL_n}]}\\	
				{[\Grass_+^{\gamma(k), \ell}/T]}
				&{[\cal{Q}^\ell/T]}
				&{[\cal{N}/{\GL_n}]}\\
			};
			\path[->, font=\scriptsize, auto]
			(m-1-1) edge node{$\sim$} (m-1-2)
			(m-2-1) edge node{$\sim$} (m-2-2)
			(m-1-1) edge (m-2-1)
			(m-1-2) edge (m-2-2)
			(m-1-2) edge (m-1-3)
			(m-2-2) edge (m-2-3)
			(m-1-3) edge node{$\pi$} (m-2-3);
		\end{tikzpicture}
	\end{equation}
	(Above, $\Grass_+^{\gamma(k), \ell} \vcentcolon= \hat{\cal{B}}_{(n), +}^{\gamma(k), \ell}$.)
\end{proof}

\begin{rem}
	In \cite{bl}, Boixeda Alvarez--Losev construct commuting actions of two trigonometric double affine Hecke algebras (DAHAs) on the $T$-equivariant Borel--Moore homology of certain equivalued affine Springer fibers, for a certain torus $T$.
	
	In the $\GL_n$ case, their affine Springer fibers are precisely our $\hat{\cal{B}}_{(1^n)}^{\gamma(k)}$, and their $T$ is our $T$.
	Via \Cref{prop:asf-noncoprime}, the monodromic action of the cocharacter lattice and the action of equivariant cohomology in \cite{bl} respectively correspond to the $\Lattice_{\geq 0}$- and $\ur{H}_T^\ast(\point)$-actions on the $T$-equivariant Borel--Moore homology of $\cal{Q}_{(1^n)}^\ell$ in \Cref{sec:polynomial}.
	The monodromic action of the finite Weyl group corresponds to the dot action in \Cref{sec:noncoprime}.
\end{rem}

\subsection{The Functor $\mathcal{M}$, and $y^n = x^d$}\label{subsec:asf-coprime}

Let $(\sf{v}_i)_{i = 1}^n$ be the standard ordered basis of $\bb{C}^n$.
Writing $x = \varpi^n$, we have an isomorphism of $\bb{C}(\!(x)\!)$-vector spaces 
\begin{align}
	\abel : \bb{C}(\!(x)\!)^n = \bb{C}(\!(x)\!) \otimes \bb{C}^n \xrightarrow{\sim} \bb{C}(\!(\varpi)\!)
	\quad\text{defined by $\abel(\sf{v}_i) = \varpi^{i - 1}$}.
\end{align}
Let $\cal{M}$ be the functor from $\bb{C}$-algebras to sets defined by
\begin{align}
	\cal{M}(A) = \left\{
	\begin{array}{l}
		\text{$A[\![\varpi^n]\!]$-submodules}\\
		M \subseteq A(\!(\varpi)\!)
	\end{array}
	\middle|
	\begin{array}{l}
		\text{$\exists\, j$ s.t.\ $\varpi^j A[\![\varpi]\!] \subseteq M \subseteq \varpi^{-j} A[\![\varpi]\!]$}\\
		\text{and $(\varpi^{-j} A[\![\varpi]\!])/M$ is locally free}\\
		\text{over $A$ of finite rank}
	\end{array}
	\right\}.
\end{align}
Thus $\cal{M}$ is the analogue of $\CptPic_\unred$ with $\bb{C}[\![\varpi]\!]$ in place of $\RRR$.
For any $\nu$, let $\cal{M}_\nu$ be the functor defined by
\begin{align}
	\cal{M}_\nu(A) = \left\{(M, F) \,\middle| 
	\begin{array}{l}
		M \in \cal{M}(A),\\
		\text{$F$ is a partial flag on $\bar{M} \vcentcolon= M/\varpi^n M$ of type $\nu$}
	\end{array}\right\}.
\end{align}
Then $\abel$ induces an isomorphism 
\begin{align}\label{eq:l-to-m}
	\sf{Ab} : \cal{L}_\nu \xrightarrow{\sim} \cal{M}_\nu.
\end{align}
We now define the element of $\fr{g}(\bb{C}[\![x]\!])$ studied in \cite{hikita}.
Let $(X^\bullet, \Phi, X_\bullet, \Phi^\vee)$ be the root datum of $G$ with respect to the maximal torus of diagonal matrices.
Let $\alpha_1, \ldots, \alpha_{n - 1} \in \Phi$ be the simple roots with respect to the upper-triangular Borel subgroup $P_{(1^n)} \subseteq G$, and let $\rho^\vee = \frac{1}{2} \sum_i \alpha_i^\vee \in \Phi^\vee$, where $\alpha_i^\vee$ is the coroot corresponding to $\alpha_i$.
For any $d > 0$ coprime to $n$, let $m, b$ be the integers such that $d = mn + b$ and $0 < b < n$, as in \cite{hikita}.
For each root $\alpha$, let $e_\alpha \in \fr{g}(\bb{C})$ be the zero-one matrix that generates the root subspace $\fr{g}_\alpha \subseteq \fr{g}$, and for each integer $j$, let $e_j = \sum_{\alpha \mid \langle \alpha, \rho^\vee\rangle = j} e_\alpha$.
Finally, let
\begin{align}\label{eq:hikita}
	\psi(d) = x^m e_b + x^{m + 1} e_{b - n}.
\end{align}
In what follows, we will need the composition of isomorphisms
\begin{align}\label{eq:affine-flag-to-m}
	\hat{\cal{B}}_\nu \xrightarrow{\text{Lem \ref{lem:l}}} \cal{L}_\nu \xrightarrow{\sf{Ab}} \cal{M}_\nu \xrightarrow{\varpi^\delta} \cal{M}_\nu,
\end{align}
where the last map is multiplication by $\varpi^\delta$, and $\delta = \frac{1}{2}(n - 1)(d - 1)$, as in \Cref{sec:coprime}.
We write the map on $\bb{C}$-points as $g\hat{K}_\nu \mapsto (M_g, F_g)$.

\begin{prop}\label{prop:asf-coprime}
	Suppose that $\RRR = \bb{C}[\![\varpi^n, \varpi^d]\!]$ for some $d > 0$ coprime to $n$.
	Then:
	\begin{enumerate}
		\item 	The map \eqref{eq:affine-flag-to-m} restricts to an isomorphism $\hat{\cal{B}}_\nu^{\psi(d)} \xrightarrow{\sim} \CptPic_\nu$.
	\end{enumerate}
	Let $\CptPic_\nu^e \subseteq \CptPic_\nu$ be the preimage of $\CptPic^e \subseteq \CptPic$, and let $\hat{\cal{B}}_\nu^{\psi(d), e} \subseteq \hat{\cal{B}}_\nu^{\psi(d)}$ correspond to $\CptPic_\nu^e \subseteq \CptPic_\nu$ under the isomorphism in (1).
	Then:
	\begin{enumerate}\setcounter{enumi}{1}
		\item 	The isomorphism in (1) matches the Springer action of $S_n$ on the Borel--Moore homologies of $\cal{B}_{(1^n)}^{\psi(d), e}$ and $\CptPic_{(1^n)}^e$, for all $e$.
		
		\item 	$\hat{\cal{B}}_\nu^{\psi(d), 0}$ is the affine Springer fiber studied by Hikita in \cite{hikita}.
		
	\end{enumerate}
\end{prop}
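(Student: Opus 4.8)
The plan is to reduce all three parts to one computation: that the element $\psi(d) \in \fr{gl}_n(\bb{C}[\![x]\!])$ of \eqref{eq:hikita} has characteristic polynomial $T^n - x^d$ over $\bb{C}(\!(x)\!)$, and that, because $\gcd(n, d) = 1$, the isomorphism $\abel$ of \S\ref{subsec:asf-coprime} carries the resulting $\bb{C}(\!(x)\!)[\psi(d)]$-module structure on $\bb{C}(\!(x)\!)^n$ to the $\bb{C}(\!(\varpi)\!)$-module structure on $\bb{C}(\!(\varpi)\!) = \bb{C}(\!(x)\!)[\varpi^d]$, sending $\psi(d)$ to multiplication by $\varpi^d$. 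The characteristic-polynomial claim is a direct computation: writing $d = mn + b$ with $0 < b < n$ as in \eqref{eq:hikita}, the matrices $e_b$ and $e_{b-n}$ are, in the usual matrix coordinates on $\fr{gl}_n$, the sums of elementary matrices along the appropriate super- and sub-diagonals (since $\langle \alpha, \rho^\vee\rangle$ records the distance of a root from the diagonal), and expanding $\det(T - \psi(d))$ gives $T^n - x^d$. That $\abel$ intertwines $\psi(d)$ with multiplication by $\varpi^d$ is then the assertion that the concrete bijection $\abel(\sf{v}_i) = \varpi^{i-1}$, combined with the row-vector (right-action) convention for the lattices $L_g$ in \Cref{lem:l}, matches the two module structures; this is checked on the basis $\sf{v}_1, \ldots, \sf{v}_n$ using $x = \varpi^n$.

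Granting this, part (1) follows as in \Cref{prop:asf-noncoprime}. Let $\theta$ be the composite isomorphism \eqref{eq:affine-flag-to-m} on lattices, so $\theta(L_g) = M_g = \varpi^\delta \abel(L_g)$. Since $\abel$ and multiplication by $\varpi^\delta$ are $\bb{C}[\![x]\!] = \bb{C}[\![\varpi^n]\!]$-linear and commute with $\psi(d) \leftrightarrow \varpi^d$, the affine Springer fiber condition $\Ad(g^{-1})\psi(d) \in \ur{Lie}(\hat{K}_\nu)$ — equivalently, $\psi(d) L_g \subseteq L_g$ together with the induced endomorphism of $\bar{L}_g = L_g/xL_g$ stabilizing the flag $F_g$ — translates into $\varpi^d M_g \subseteq M_g$ together with the flag on $\bar{M}_g = M_g/\varpi^n M_g$ being stable under the action of $y = \varpi^d$. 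As $M_g$ is automatically a $\bb{C}[\![\varpi^n]\!]$-module and $\bb{C}[\![\varpi^n, \varpi^d]\!] = \RRR$, this says exactly that $(M_g, F_g) \in \CptPic_\nu(\bb{C})$, the condition $K M_g = K$ being immediate from the lattice bracketing; conversely every $(M, F) \in \CptPic_\nu(\bb{C})$ has a $\theta$-preimage in $\hat{\cal{B}}_\nu^{\psi(d)}(\bb{C})$. Running the same argument over an arbitrary $\bb{C}$-algebra $A$, with the local-freeness conditions matching on both sides, yields the isomorphism of ind-schemes; compare \cite{laumon} and \cite[Thm.\@ 1.1]{GK}.

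For part (2), both Springer actions are instances of \Cref{prop:springer} and \Cref{rem:borel-moore} for maps into $[\cal{N}/{\GL_n}]$: on the Picard side this is $p(M) = (\bar{M}, y)$, and on the affine Springer side it is $g\hat{K}_\nu \mapsto (\bar{L}_g, \overline{\Ad(g^{-1})\psi(d)})$, both landing in $[\cal{N}/{\GL_n}]$ because the relevant endomorphism is nilpotent mod $x$ (indeed $y^n = x^d \equiv 0$, and $\psi(d)$ is strictly triangular mod $x$). Being $\bb{C}[\![x]\!]$-linear, $\theta$ descends mod $x$ to an isomorphism $\bar{L}_g \xrightarrow{\sim} \bar{M}_g$ intertwining $\overline{\Ad(g^{-1})\psi(d)}$ with $\bar{y}$, so the two maps to $[\cal{N}/{\GL_n}]$ agree under the isomorphism of part (1); the cartesian-diagram argument in the proof of \Cref{prop:asf-noncoprime}(iii) then identifies the Springer sheaves, hence the $S_n$-actions on Borel--Moore homology, componentwise in $e$. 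For part (3): $\psi(d)$ is a sum of root vectors, hence traceless, so it lies in $\fr{sl}_n(\bb{C}[\![x]\!])$; the $\SL_n$ affine Springer fiber over $\psi(d)$ is a single "degree" component of the $\GL_n$ one, and the $\varpi^\delta$-twist built into \eqref{eq:affine-flag-to-m} is precisely the normalization making this component correspond to $\hat{\cal{B}}_\nu^{\psi(d), 0}$, i.e. to $\CptPic_\nu^0$. Since \eqref{eq:hikita} is, by construction, the element considered in \cite{hikita}, this is what is claimed.

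The only real work is the computation in the first paragraph, together with pinning down the $\varpi^\delta$-twist so that the connected-component bookkeeping in parts (1) and (3) is consistent with \cite{hikita}. Both are short once set up, but they require care with the root-datum and grading conventions and with the precise normalizations in \emph{loc.\ cit.}; everything else is a formal consequence of the dictionary already used in \Cref{prop:asf-noncoprime}.
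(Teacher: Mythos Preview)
Your proposal is correct and follows essentially the same approach as the paper. The key computation---that $\abel$ transports right multiplication by $\psi(d)$ on $\bb{C}(\!(x)\!)^n$ to multiplication by $\varpi^d$ on $\bb{C}(\!(\varpi)\!)$, checked on the basis $(\sf{v}_i)_i$---is exactly what the paper does, and parts (2) and (3) are handled the same way (the cartesian-diagram argument from \Cref{prop:asf-noncoprime}(iii), and the $\varpi^\delta$-twist pinning down the $\SL_n$ component). The only minor differences are that the paper reduces part (1) to $\bb{C}$-points immediately (legitimate since both sides are reduced closed sub-ind-schemes of an ambient isomorphism), and for part (3) the paper makes the $\varpi^\delta$ bookkeeping concrete by observing that the identity coset $\hat{K}_\nu$ lands in $\hat{\cal{B}}_\nu^{\psi(d),0}$, whereas you assert the normalization works without quite naming this criterion.
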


\begin{proof}
	Part (i):
	It suffices to work on $\bb{C}$-points.
	By checking on the basis $(\sf{v}_i)_i$, we find that $\abel$ transports the action of $\psi(d)$ on $\bb{C}(\!(x)\!)^n$ by right multiplication onto the action of $\varpi^d$ on $\bb{C}(\!(\varpi)\!)$ by multiplication.
	Therefore,
	\begin{align}
		g\hat{K}_\nu \in \cal{B}_\nu^{\psi(d)}(\bb{C})
		&\iff \text{$(\bb{C}[\![x]\!]^n \cdot g^{-1}, F^\std \cdot g^{-1})$ is $\gamma$-stable}\\
		&\iff \text{$(M_g, F_g)$ is $\RRR$-stable}
	\end{align}
	for all $g\hat{K}_\nu \in \hat{\cal{B}}_\nu(\bb{C})$ and fixed $e \in \bb{Z}$.
	
	Part (ii):
	Similar to the proof of part (iii) of \Cref{prop:asf-noncoprime}, but replacing the diagram there with this one:
	\begin{equation}\label{eq:asf-to-pic}
		\begin{tikzpicture}[baseline=(current bounding box.center), >=stealth]
			\matrix(m)[matrix of math nodes, row sep=2.5em, column sep=3em, text height=2ex, text depth=0.5ex]
			{ 		
				\hat{\cal{B}}_{(1^n)}^{\psi(d), e}
				&\CptPic_{(1^n)}^e
				&{[\widetilde{\cal{N}}_{(1^n)}/{\GL_n}]}\\	
				\Grass^{\psi(d), e}
				&\CptPic^e
				&{[\cal{N}/{\GL_n}]}\\
			};
			\path[->, font=\scriptsize, auto]
			(m-1-1) edge node{$\sim$} (m-1-2)
			(m-2-1) edge node{$\sim$} (m-2-2)
			(m-1-1) edge (m-2-1)
			(m-1-2) edge (m-2-2)
			(m-1-2) edge (m-1-3)
			(m-2-2) edge (m-2-3)
			(m-1-3) edge node{$\pi$} (m-2-3);
		\end{tikzpicture}
	\end{equation}
	(Above, $\Grass^{\psi(d), e} \vcentcolon= \hat{\cal{B}}_{(n)}^{\psi(d), e}$.)
	
	Part (iii): 
	The multiplication by $\varpi^\delta$ in the last arrow of \eqref{eq:affine-flag-to-m} ensures that $\hat{\cal{B}}_\nu^{\psi(d), 0}$ contains the identity coset $\hat{K}_\nu \in \hat{\cal{B}}_\nu$.
	As a consequence, $\hat{\cal{B}}_\nu^{\psi(d), 0}$ belongs to the connected component of $\hat{\cal{B}}_\nu$ that corresponds to the partial affine flag variety of $\SL_n$ of parabolic type $\nu$.
	The latter is defined analogously to the partial affine flag variety of $G = \GL_n$, which means that $\hat{\cal{B}}_\nu^{\psi(d), 0}$ is precisely the affine Springer fiber over $\psi(d)$ with structure group $\SL_n$.
\end{proof}

\section{\texorpdfstring{Filtrations on $\mathrm{H}^\ast(\CptPic/\Lattice)$}{Filtrations on H*(P/Λ)}}\label{sec:filtration}

\subsection{}

In this section, we discuss the following filtrations on the variety $\CptPic/\Lattice$ or its cohomology:
\begin{enumerate}
	\item
	The gap filtration on the variety, defined in terms of the function $c(M) = \sf{dim}_\bb{C}(\SSS M/M)$ from the introduction.
	
	\item 
	The Hikita filtration \cite{hikita}, defined on the variety for $\RRR = \bb{C}[\![\varpi^n, \varpi^d]\!]$ with $n, d$ coprime, by intersecting the affine Springer fiber from \S\ref{subsec:asf-coprime} with increasing unions of affine Schubert cells.
	
	\item
	The perverse filtration on cohomology, defined in terms of a versal deformation of a global curve $C$ into which $\Spec(\RRR)$ embeds.
	
\end{enumerate}
In \Cref{thm:gap-vs-hikita}, we relate (1) and (2) by way of an involution $\iota$, as needed in \S\ref{subsec:proof-b}.
The involution $\iota$ is related to a duality studied in \cite{gm_14}, but to our knowledge, our work is the first time it has been used to relate the filtrations above.

\subsection{The Gap Filtration}\label{subsec:gap}

Let $\RRR = \bb{C}[\![x]\!][y]/(f)$ be any generically separable degree-$n$ cover of the $x$-axis, fully ramified at $(x, y) = (0, 0)$.
For any integer composition $\nu$ of $n$, we define the \dfemph{gap filtration} on $\CptPic_\nu$ to be its increasing filtration by the subvarieties
\begin{align}
	\CptPic_{\nu, \leq \gap} = \bigcup_{\gap' \leq \gap} {\CptPic_\nu(\gap')}.
\end{align}
It descends to a filtration of $\CptPic_\nu/\Lattice$ by subvarieties $\CptPic_{\nu, \leq \gap}/\Lattice$.
We define $\sf{Q}_{\leq \ast}$ to be the increasing filtration on the Borel--Moore homology of $\CptPic_\nu/\Lattice$ where
\begin{align}
	\sf{Q}_{\leq \gap}\, \ur{H}_\ast^{\BM}(\CptPic_\nu/\Lattice)
	= \im(\ur{H}_\ast^{\BM}(\CptPic_{\nu, \leq \gap}/\Lattice) \to \ur{H}_\ast^{\BM}(\CptPic_\nu/\Lattice)).
\end{align}
We define $\sf{Q}^{\geq \ast}$ to be the decreasing filtration on the cohomology of $\CptPic_\nu/\Lattice$ where
\begin{align}
	\sf{Q}^{\geq \gap}\, \ur{H}^\ast(\CptPic_\nu/\Lattice) = \ker(\ur{H}^\ast(\CptPic_\nu/\Lattice) \to \ur{H}^\ast(\CptPic_{\nu, \leq \gap}/\Lattice)).
\end{align}
Since compactly-supported cohomology is dual to Borel--Moore homology, and $\CptPic_\nu/\Lattice$ is proper, $\sf{Q}^{\geq \gap}$ is orthogonal to $\sf{Q}_{\leq \gap}$ for all $\gap$.
We note in passing that these definitions still make sense for non-planar $\RRR$.

Let $\CptPic_{\nu, \leq \gap}^e \subseteq \CptPic_\nu^e \subseteq \CptPic_\nu$ be the respective preimages of $\CptPic_{\leq \gap}^e \subseteq \CptPic^e \subseteq \CptPic$, like in the notation of \Cref{prop:asf-coprime}.
Any isomorphism $\CptPic^0 \xrightarrow{\sim} \CptPic^e$ induced by multiplication by uniformizer will preserve $\gap$, hence restrict to an isomorphism $\CptPic_{\leq \gap}^0 \xrightarrow{\sim} \CptPic_{\leq \gap}^e$.
This reduces the study of the gap filtration to the study of $\CptPic_{\leq \gap}^0$. 
Recall that in the unibranch case, $\CptPic^0 \simeq \CptPic/\Lattice$, and we prefer to write $\CptJac$ in place of $\CptPic^0$.

\subsection{The Gap Filtration for $y^n = x^d$}\label{subsec:gap-coprime}

Suppose that $\RRR = \bb{C}[\![\varpi^n, \varpi^d]\!]$ with $n, d$ coprime.
Recall that in this case, there is a $\bb{G}_m$-action on $\CptPic$ induced by scaling $\varpi$, which necessarily stabilizes the connected component $\CptJac$.
As in \Cref{sec:coprime}, let 
\begin{align}
	I^\delta(\SSS) 
	&\vcentcolon= \{\Delta \subseteq \bb{Z}_{\geq 0} \mid \text{$\Delta + n, \Delta + d \subseteq \Delta$ and $|\bb{Z}_{\geq 0} \setminus \Delta| = \delta$}\}.
\end{align}
By \cite[\S{3}]{piontkowski}, the setup of \Cref{lem:paving} restricts to a bijection 
\begin{align}
	\begin{array}{rcl}
		I^\delta(\SSS)  &\xrightarrow{\sim} &\CptJac^{\bb{G}_m},\\
		\Delta &\mapsto &M_\Delta
	\end{array}
\end{align}
that partitions $\CptJac$ into the affine spaces $\bb{A}_\Delta$ for $\Delta \in I^\delta(\SSS)$.

\begin{lem}\label{lem:gap-vs-min}
	If $\RRR = \bb{C}[\![\varpi^n, \varpi^d]\!]$ with $n, d$ coprime, then 
	\begin{align}
		\gap(M) = \delta - \min(\Delta)
		\quad\text{for all $\Delta \in I^\delta(\SSS)$ and $M \in  \bb{A}_\Delta(\bb{C})$}.
	\end{align}
	In particular, $\CptJac(\gap) = \bigcup_{\Delta \mid \min(\Delta) = \delta - \gap} \bb{A}_\Delta$ and $\CptJac_{\leq \gap} = \bigcup_{\Delta \mid \min(\Delta) \geq \delta - \gap} \bb{A}_\Delta$.
\end{lem}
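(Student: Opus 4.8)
The plan is to compute $\gap$ uniformly on each attracting cell $\bb{A}_\Delta$: I will show that every module in $\bb{A}_\Delta$ has the same semigroup module $\Delta$, and then read off the gap from a purely combinatorial count of $\varpi$-valuations. The basic tool is the elementary counting principle that if $M \subseteq L$ are $\bb{C}$-subspaces of $K = \bb{C}(\!(\varpi)\!)$ with $L/M$ finite-dimensional, then $\dim_\bb{C}(L/M) = |\Gamma(L) \setminus \Gamma(M)|$, where $\Gamma(-)$ denotes the set of $\varpi$-valuations of nonzero elements. This follows at once from the valuation filtration: for each $k$ the subquotient $\{x \in L : \val_\varpi(x) \geq k\}/\{x \in L : \val_\varpi(x) \geq k+1\}$ is one-dimensional when $k \in \Gamma(L)$ and zero otherwise, and the corresponding subquotient of $M$ surjects onto it precisely when $k \in \Gamma(M) \subseteq \Gamma(L)$, so $\dim_\bb{C}(L/M) = \sum_k\big([k \in \Gamma(L)] - [k \in \Gamma(M)]\big)$.

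The one non-formal input is the structure of the paving: specializing \Cref{lem:paving} to $\CptJac$ via \cite[\S{3}]{piontkowski}, the cell $\bb{A}_\Delta$ is exactly the locus of modules $M$ for which $\lim_{t \to 0}(t \cdot M)$ — the monomial module $\RRR\langle \varpi^k : k \in \Gamma(M)\rangle$ obtained by passing to leading terms with respect to $\varpi$-valuation — equals $M_\Delta$; equivalently, $\bb{A}_\Delta = \{M \in \CptJac : \Gamma(M) = \Delta\}$. Granting this, fix $M \in \bb{A}_\Delta(\bb{C})$ and write $\mu = \min(\Delta)$. Since $\mu \in \Gamma(M) = \Delta$, there is $m \in M$ with $\val_\varpi(m) = \mu$, i.e.\ $m = \varpi^\mu u$ with $u \in \SSS^\times$; hence $\SSS M \supseteq \SSS m = \varpi^\mu \SSS$, while $\SSS M \subseteq \varpi^\mu \SSS$ because every element of $M$ has valuation $\geq \mu$. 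Thus $\SSS M = \varpi^\mu \SSS$ and $\Gamma(\SSS M) = \bb{Z}_{\geq \mu}$.

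Applying the counting principle to $M \subseteq \SSS M$ then gives $\gap(M) = \dim_\bb{C}(\SSS M/M) = |\bb{Z}_{\geq \mu} \setminus \Delta|$. Since $\{0,1,\dots,\mu-1\} \subseteq \bb{Z}_{\geq 0} \setminus \Delta$ and $\bb{Z}_{\geq 0}\setminus\Delta = \{0,\dots,\mu-1\}\sqcup(\bb{Z}_{\geq\mu}\setminus\Delta)$, the hypothesis $|\bb{Z}_{\geq 0}\setminus\Delta| = \delta$ forces $|\bb{Z}_{\geq\mu}\setminus\Delta| = \delta - \mu$, so $\gap(M) = \delta - \min(\Delta)$ for every $M \in \bb{A}_\Delta$. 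The remaining assertions are then immediate from the partition $\CptJac = \bigsqcup_\Delta \bb{A}_\Delta$: the locus $\CptJac(\gap)$ is the union of the $\bb{A}_\Delta$ with $\min(\Delta) = \delta-\gap$, and $\CptJac_{\leq\gap} = \bigcup_{\gap'\leq\gap}\CptJac(\gap')$ is the union of the $\bb{A}_\Delta$ with $\min(\Delta)\geq\delta-\gap$; this also recovers $0 \le \gap \le \delta$ since $\min(\Delta)\le\delta$ always.

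The step I expect to be the main obstacle is justifying the cell description $\bb{A}_\Delta = \{M : \Gamma(M) = \Delta\}$, i.e.\ that the Bia\l ynicki--Birula-type cell of $M_\Delta$ is governed by the initial (leading-term) module rather than merely being an affine space; this needs to be extracted carefully from the version of \Cref{lem:paving} and of \cite{piontkowski} that applies to $\CptJac$. A minor point to keep straight is the normalization convention relating $\CptJac^{\bb{G}_m}$ to $I^\delta(\SSS) \subseteq \bb{Z}_{\geq 0}$ (multiplication by a fixed power of $\varpi$), but since $\gap$ is visibly invariant under scaling by powers of $\varpi$ — as $|\Gamma(\SSS\varpi^j M)\setminus\Gamma(\varpi^j M)| = |\Gamma(\SSS M)\setminus\Gamma(M)|$ — this does not affect the computation. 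Everything else is bookkeeping with valuations.
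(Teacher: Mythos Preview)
Your proof is correct and follows essentially the same idea as the paper's, just unpacked differently. The paper observes that $\varpi^{-\min(\Delta)} M \in \cal{D}$ (the fundamental domain from \Cref{sec:quot}) and then invokes \Cref{lem:gap-vs-len} to conclude $\gap(M) = \gap(\varpi^{-\min(\Delta)} M) = \ell(\varpi^{-\min(\Delta)} M) = \delta - \min(\Delta)$; you instead compute $\SSS M = \varpi^{\min(\Delta)}\SSS$ directly and count valuations, which is exactly what underlies \Cref{lem:gap-vs-len}. Both arguments implicitly use the characterization $\bb{A}_\Delta = \{M : \Gamma(M) = \Delta\}$ that you flag---the paper needs it too, to know that $\varpi^{-\min(\Delta)} M$ lands in $\cal{D}$---so your caution there is well placed but equally applies to the original.
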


\begin{proof}
	In the notation of \Cref{sec:quot}, we have $\varpi^{-{\min(\Delta)}} M \in \cal{D}$ for all $\Delta \in I^\delta(\SSS)$ and $M \in \bb{A}_\Delta$.
	Now observe that $\gap(M) = \gap(\varpi^{-{\min(\Delta)}} M) = -{\min(\Delta)} + \ell(M) = -{\min(\Delta)} + \delta$, where the second equality holds by \Cref{lem:gap-vs-len}.
\end{proof}

\subsection{The Hikita Filtration}\label{subsec:hikita}

Next, we (re)turn to Hikita's work in \cite{hikita}. We follow the same setup as in the previous subsection.
In the notation of \S\ref{subsec:asf-coprime}, recall that \Cref{prop:asf-coprime} gives us an isomorphism
\begin{align}
	\begin{array}{rcl}
		\Grass^{\psi(d), 0} &\xrightarrow{\sim} &\CptJac,\\
		g\hat{K} &\mapsto &M_g,
	\end{array}
\end{align}
where $\Grass^{\psi(d), 0}$ is the affine Springer fiber over $\psi(d)$ with structure group $\SL_n$.
Hikita first defines a filtration of $\Grass^{\psi(d), 0}$, then lifts it to $\hat{\cal{B}}_\nu^{\psi(d), 0}$ along the projection $\hat{\cal{B}}_\nu \to \hat{\cal{B}}_{(n)} = \Grass$.
Thus, as with the gap filtration, we can largely reduce to studying the $\nu = (n)$ case.

Recall that the partition of $\Grass$ into $\hat{I}$-orbits, where $\hat{I} = \hat{K}_{(1^n)}$ acts on $\Grass$ by left multiplication, forms a stratification into \dfemph{affine Schubert cells}, which are affine spaces:
\begin{align}
	\Grass &= \coprod_{\mu \in X_\bullet} \Grass_\mu,
	\quad\text{where $\hat{\cal{G}}_\mu \vcentcolon= \hat{I}x^\mu\hat{K}/\hat{K}$}.
\end{align}
Above, $X_\bullet$ is the same cocharacter lattice as in \S\ref{subsec:asf-coprime}, and for any $\mu \in X_\bullet$, we write $x^\mu$ to mean the image of $x$ under $\mu : \hat{\bb{G}}_m \to \hat{G}$.
The affine Grassmannian of $\SL_n$ is the sub-ind-scheme $\Grass_{\SL_n} \subseteq \Grass$ given by
\begin{align}
	\Grass_{\SL_n} = \coprod_{\mu \in X_\bullet^0} \Grass_\mu,
	\quad\text{where $X_\bullet^0 \vcentcolon= \{\mu \in X_\bullet \mid \mu_1 + \cdots + \mu_n = 0\}$}.
\end{align}
The proof of \cite[Prop.\@ 4.1]{hikita} shows that there is a bijection $a : X_\bullet^0 \xrightarrow{\sim} \bb{Z}_{\geq 0}^{n - 1}$ defined as follows:
$a_i(0, \ldots, 0) = 0$ for all $i$, and if $\mu \neq (0, \ldots, 0)$, then
\begin{align}\label{eq:hikita-a}
	&(a_1, \ldots, a_{n - k}, a_{n - k + 1}, \ldots, a_{n - 1})\\
	&\qquad= (\mu_{k + 1} - \mu_k - 1, \ldots, \mu_n - \mu_k - 1, \mu_1 - \mu_k, \ldots, \mu_{k - 1} - \mu_k),
\end{align}
where $k$ is the largest index in $\{1, \ldots, n\}$ such that $\mu_k = \min_i \mu_i$.
Note that since $\mu_1 + \cdots + \mu_n = 0$, we must have $\mu_k < 0$.
For all $a \in \bb{Z}_{\geq 0}^{n - 1}$, let $|a| = a_1 + \cdots + a_{n - 1}$.
For any integer $\gap$, let
\begin{align}
	\Grass_{\SL_n, \leq \gap} = \bigcup_{\substack{\mu \in X_\bullet^0 \\ |a(\mu)| \leq \gap}} \Grass_\mu.
\end{align}
Following \cite[Cor.\@ 4.7]{hikita}, we define the \dfemph{Hikita filtration} on $\Grass^{\psi(d), 0}$ to be its increasing filtration by the subvarieties $\Grass_{\leq \gap}^{\psi(d), 0} = \Grass^{\psi(d), 0} \cap \Grass_{\SL_n, \leq \gap}$.

For each integer composition $\nu$ of $n$, we define $\hat{\cal{B}}_{\nu, \leq \gap}^{\psi(d), 0}$ to be the preimage of $\Grass_{\leq \gap}^{\psi(d), 0}$ along the projection $\hat{\cal{B}}_\nu \to \Grass$.
We define the \dfemph{Hikita filtration} on $\hat{\cal{B}}_\nu^{\psi(d), 0}$ to be its increasing filtration by these subvarieties.
This recovers the definition for $\nu = (1^n)$ in the proof of \cite[Thm.\@ 4.17]{hikita}.

\subsection{The Involution $\iota$}

For any $g \in G$, let $g^\tau$ be the ``anti-transpose'' given by $g^\tau = J g^t J$, where $g^t$ is the usual transpose and $J \in G$ the matrix with $1$'s along the anti-diagonal and $0$'s elsewhere.
The map $\iota : G \to G$ given by
\begin{align}
	\iota(g) \vcentcolon= (g^\tau)^{-1} = (g^{-1})^\tau
\end{align}
is an involutory automorphism with differential $\iota : \fr{g} \to \fr{g}$ given by 
\begin{align}
	\iota(\gamma) = -\gamma^\tau.
\end{align}
We extend these automorphisms to $\hat{G}$ and its Lie algebra by linearity and completion.
We see that $\iota(\hat{K}) = \hat{K}$ and $\iota(\hat{K}_{(1^n)}) = \hat{K}_{(1^n)}$, from which we deduce that $\iota$ descends to involutions of $\Grass$ and $\hat{\cal{B}}_{(1^n)}$.

From the definition \eqref{eq:hikita}, we also see that $\iota(\psi(d)) = -\psi(d)$.
We deduce that the affine Springer fibers $\Grass^{\psi(d)}$ and $\hat{\cal{B}}_{(1^n)}^{\psi(d)}$ are stable under $\iota$, as are their $\SL_n$ variants $\Grass^{\psi(d), 0}$ and $\hat{\cal{B}}_{(1^n)}^{\psi(d), 0}$.

\begin{lem}\label{lem:involution}
	The involutions above have the following properties:
	\begin{enumerate}
		\item 	For all $\mu \in X_\bullet$, we have $\iota(\Grass_\mu) = \Grass_{\iota(\mu)}$, where
		\begin{align}
			\iota(\mu_1, \ldots, \mu_n) = (-\mu_n, \ldots, -\mu_1).
		\end{align}
		\item 	For any integer $e$, the involution on the Borel--Moore homology of $\hat{\cal{B}}_{(1^n)}^{\psi(d), e}$ induced by $\iota$ is equivariant with respect to the Springer action of $S_n$.
		Moreover, it preserves the homological degree and weight filtration.
	\end{enumerate}
\end{lem}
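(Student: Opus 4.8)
The plan is to verify each statement by reducing to concrete matrix computations in the loop group, using the lattice models from \S\ref{subsec:asf-coprime} and \S\ref{subsec:hikita}. For part (1), I would first observe that $\iota$ fixes the standard maximal torus $\hat{T}$ setwise (it negates and reverses cocharacters, as claimed), and sends the standard Iwahori $\hat{I} = \hat{K}_{(1^n)}$ to itself, because conjugation by $J$ followed by inverse-transpose sends upper-triangular-mod-$x$ matrices to upper-triangular-mod-$x$ matrices. Hence $\iota$ permutes the affine Schubert cells $\hat{\mathcal{G}}_\mu = \hat{I}x^\mu\hat{K}/\hat{K}$; since it sends $x^\mu\hat{K}$ to $x^{\iota(\mu)}\hat{K}$ (as $\iota(x^\mu) = x^{\iota(\mu)}$ by direct computation with diagonal matrices) and $\iota(\hat{I}) = \hat{I}$, $\iota(\hat{K}) = \hat{K}$, we get $\iota(\hat{\mathcal{G}}_\mu) = \hat{\mathcal{G}}_{\iota(\mu)}$. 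The formula $\iota(\mu_1,\ldots,\mu_n) = (-\mu_n,\ldots,-\mu_1)$ follows from the anti-transpose reversing coordinate order and the inverse negating exponents.

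For part (2), I would argue in two steps. \emph{Springer-equivariance:} the Springer action arises by the cartesian square \eqref{eq:asf-to-pic} from the $\mathrm{GL}_n$-equivariant Springer sheaf on $[\mathcal{N}/\mathrm{GL}_n]$ and base change along $p$. The involution $\iota$ on $\hat{\mathcal{B}}_{(1^n)}^{\psi(d),e}$ is compatible with an involution on $\mathrm{Grass}^{\psi(d),e}$ (namely $\iota$ itself), and on the target of $p$ the differential $\iota(\gamma) = -\gamma^\tau$ is conjugate to $-\gamma$ by the automorphism $g \mapsto g^\tau$; but $-\gamma$ and $\gamma$ have the same Jordan type, so $\iota$ lifts to an automorphism of the diagram \eqref{eq:asf-to-pic} covering an automorphism of $[\mathcal{N}/\mathrm{GL}_n]$ that fixes each orbit $\mathcal{O}_\lambda$. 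Since the endomorphism algebra $\mathrm{End}(\mathcal{S}_{(1^n)}) \cong \mathbb{C}S_n$ is canonically attached to the Springer sheaf and is preserved by any such automorphism (it acts on each isotypic component by a character, and purity in weight $0$ pins it down; one should check $\iota$ does not twist by the sign character — this follows because $\iota$ acts trivially on the open dense regular-nilpotent stratum's monodromy, or more robustly, because $\iota$ is isotopic to the identity on the relevant spaces, being an automorphism of a connected group), the induced action on cohomology commutes with the $S_n$-action. \emph{Preservation of degree and weight:} $\iota$ is an isomorphism of $\mathbb{C}$-varieties, so it automatically preserves homological degree; since it is a \emph{morphism} of algebraic varieties, its action on $\mathrm{H}_\ast^{\mathrm{BM}}$ preserves the weight filtration by strictness of morphisms with respect to weights in mixed Hodge theory (equivalently, Frobenius-weight considerations after spreading out).

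The main obstacle I anticipate is the verification that $\iota$ does \emph{not} twist the $S_n$-action by the sign character — i.e., that $\iota$ acts on the Springer sheaf as an \emph{inner} automorphism of $\mathbb{C}S_n$ (hence trivially on the associated graded, where it matters) rather than via the outer sign twist. The cleanest way around this is to note that $\iota$ arises from an automorphism of $\mathrm{GL}_n$ of the form $g \mapsto w_0 (g^{-1})^t w_0^{-1}$ (where $w_0$ realizes $J$), which is the composition of the Cartan involution with conjugation by the longest element; on the Weyl group side this induces $w \mapsto w_0 w^{-1} w_0$, an inner-and-inverse operation, and on the Springer representation indexed by $\lambda$ it acts by $(-1)^{\text{(something)}}$ times a scalar — but since everything happens within a \emph{connected} group $\mathrm{GL}_n(\mathbb{C}(\!(x)\!))$, $\iota$ is homotopic to the identity, so it acts trivially on all of singular cohomology with its $S_n$-module structure. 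This last observation — connectedness of the loop group forces $\iota_\ast = \mathrm{id}$ on cohomology, making equivariance automatic and weight/degree preservation trivial — is probably the quickest complete argument, and I would lead with it, using the lattice-model description only to pin down part (1).
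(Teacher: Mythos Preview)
Your treatment of part~(1) and your diagram-chase for Springer-equivariance in part~(2) follow the paper's route: one checks that $\Ad(\iota(g)^{-1})\psi(d) = -\iota(\Ad(g^{-1})\psi(d))$, so the nilpotent residues differ by $\theta \mapsto \theta^\tau$ and hence have the same Jordan type. Your remark that preservation of homological degree and weight is automatic for an algebraic isomorphism is correct, and is in fact simpler than what the paper does (the paper invokes the affine paving together with the dimension identity $\dim \bb{A}_\mu = \dim \bb{A}_{\iota(\mu)}$, which is more than needed for this particular claim).

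The ``quickest complete argument'' at the end, however, is broken and should be discarded. The loop group $\GL_n(\bb{C}(\!(x)\!))$ is not connected---its $\pi_0$ is $\bb{Z}$, detected by $\val\det$---and even if it were, an automorphism of a connected group need not be homotopic to the identity, nor would such a homotopy restrict to one on the affine Springer fiber. Decisively, $\iota_\ast$ is \emph{not} the identity on $\ur{H}_\ast^{\BM}(\hat{\cal{B}}_{(1^n)}^{\psi(d),e})$: by part~(1), $\iota$ sends the paving cell $\bb{A}_\mu$ isomorphically onto $\bb{A}_{\iota(\mu)}$, and already for $(n,d)=(3,4)$ there are $\mu \neq \iota(\mu)$ in $X_\bullet^{\psi(d),0}$ (cf.\ \Cref{ex:min-delta}). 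So $\iota_\ast$ nontrivially permutes the cell basis, and you cannot sidestep the sheaf-level argument.

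Your sign-character worry is legitimate---the paper records only the Jordan-type consequence and leaves this point implicit---but your intermediate remark, not the connectedness claim, is what resolves it. Since $\iota$ preserves the upper-triangular Borel, it lifts to the Grothendieck--Springer resolution, and on permutation matrices one computes $\iota(\dot w) = J\dot w J$ (note: $(\dot w^{-1})^t = \dot w$, so this is $w_0 w w_0$, not $w_0 w^{-1} w_0$ as you wrote). Thus $\iota$ induces the \emph{inner} automorphism $c_{w_0}$ of $W$, which fixes every irreducible representation up to isomorphism; this is what excludes a sign twist and suffices for the Frobenius-character application downstream.
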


In preparation for the proof of part (2), we set up some notation.
Recall that $\Grass^{\psi(d), 0} \subseteq \Grass_{\SL_n}$, and hence,
\begin{align}\label{eq:strata}
	\Grass^{\psi(d), 0} = \coprod_{\mu \in X_\bullet^0} \bb{A}_\mu,
	\quad\text{where $\bb{A}_\mu \vcentcolon= \Grass^{\psi(d), 0} \cap \Grass_\mu$}.
\end{align}
Let $X_\bullet^{\psi(d), 0} \subseteq X_\bullet^0$ be the subset of cocharacters $\mu$ for which $\bb{A}_\mu$ is nonempty.
It is explained in \cite[\S{2.3}]{hikita}, following \cite{gkm}, that these $\bb{A}_\mu$ are affine spaces.
Moreover, \cite[Thm.\@ 2.7]{hikita} is an explicit combinatorial formula for their dimensions, which shows that
\begin{align}\label{eq:involution-dim}
	\dim(\bb{A}_\mu) = \dim(\bb{A}_{\iota(\mu)})
\end{align}
for all $\mu \in X_\bullet^{\psi(d), 0}$.

\begin{proof}[Proof of \Cref{lem:involution}]
	Part (1) follows from computing $\iota(x^\mu) = x^{\iota(\mu)}$.
	To show part (2):
	First, recall that the Springer action in question is defined via \Cref{prop:springer} and \Cref{rem:borel-moore} via the outer rectangle of \eqref{eq:asf-to-pic}.
	The bottom arrow of this outer rectangle sends $g\hat{K} \mapsto [\Ad(g^{-1})\psi(d)\,\text{mod $x$}]$.
	So we must show that the residues of $\Ad(g^{-1})\psi(d)$ and $\Ad(\iota(g)^{-1})\psi(d)$ mod $x$ have the same Jordan types as nilpotent elements of $\fr{g}$.
	This follows from computing
	\begin{align}
		\Ad(\iota(g)^{-1})\psi(d) = -{\Ad}(\iota(g)^{-1})\iota(\psi(d)) = -\iota(\Ad(g^{-1})\psi(d)),
	\end{align}
	then observing that $\iota$ commutes with reduction mod $x$ and preserves the Jordan types of nilpotent elements.
	
	The fact that the involution on $\ur{H}_\ast^\BM(\hat{\cal{B}}_{(1^n)}^{\psi(d), e})$ preserves the homological degree and weight filtration follows from $\hat{\cal{B}}_{(1^n)}^{\psi(d), e}$ being paved by the affine spaces $\bb{A}_\mu$, together with the identity \eqref{eq:involution-dim}.
\end{proof}

In the notation from the end of \S\ref{subsec:gap}, set $\CptJac_\nu = \CptPic_\nu^0$ and $\CptJac_{\nu, \leq \gap} = \CptPic_{\nu, \leq \gap}^0$.
Together with \Cref{lem:involution}(2), the following result completes a necessary step in proof (B) of case (1) of \Cref{thm:ors-quot}.

\begin{thm}\label{thm:gap-vs-hikita}
	Suppose that $\RRR = \bb{C}[\![\varpi^n, \varpi^d]\!]$ for some $d > 0$ coprime to $n$.
	Then the composition of isomorphisms
	\begin{align}\label{eq:gap-vs-hikita}
		\hat{\cal{B}}_{(1^n)}^{\psi(d), 0} 
		\xrightarrow{\iota} \hat{\cal{B}}_{(1^n)}^{\psi(d), 0}
		\xrightarrow{\text{Prop \ref{prop:asf-coprime}}} \CptJac_{(1^n)}
	\end{align}
	restricts to an isomorphism $\iota(\hat{\cal{B}}_{(1^n), \leq \gap}^{\psi(d), 0}) \xrightarrow{\sim} \CptJac_{(1^n), \leq \gap}$ for all $\gap$.
\end{thm}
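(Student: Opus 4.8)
\medskip
\noindent\textit{Proof plan.}
The idea is to reduce to the case $\nu = (n)$, match the two filtrations cell by cell through the pavings by torus-fixed points, and then reduce the whole statement to a purely combinatorial identity between Hikita's statistic $|a(\mu)|$ and the function $\gap$ on monomial modules.

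First I would note that both filtrations are pulled back along the projection $\hat{\cal{B}}_{(1^n)} \to \hat{\cal{B}}_{(n)} = \Grass$ and its analogue $\CptPic_{(1^n)} \to \CptPic$: the Hikita filtration on $\hat{\cal{B}}_{(1^n)}^{\psi(d), 0}$ is by definition the preimage of the Hikita filtration on $\Grass^{\psi(d), 0}$ (see \S\ref{subsec:hikita}), and the gap filtration on $\CptJac_{(1^n)}$ is the preimage of the one on $\CptJac$ because $\gap(M)$ does not involve a flag. Since the isomorphism \eqref{eq:affine-flag-to-m} and the involution $\iota$ are both compatible with these projections, it suffices to treat $\nu = (n)$: writing $\phi$ for the composition of $\iota$ with the isomorphism of \Cref{prop:asf-coprime}(1), I must show $\phi(\Grass_{\leq \gap}^{\psi(d), 0}) = \CptJac_{\leq \gap}$ for every $\gap$.

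Next I would reduce this to combinatorics of torus-fixed points. Both sides are unions of paving cells: $\Grass_{\leq \gap}^{\psi(d), 0} = \bigcup_{|a(\mu)| \leq \gap} \bb{A}_\mu$ with $\bb{A}_\mu = \Grass^{\psi(d), 0} \cap \Grass_\mu$ by construction, and $\CptJac_{\leq \gap} = \bigcup_{\min(\Delta) \geq \delta - \gap} \bb{A}_\Delta$ by \Cref{lem:gap-vs-min}. The isomorphism of \Cref{prop:asf-coprime} is equivariant for the $\bb{G}_m$-action rescaling $\varpi$ on $\CptJac$ and the corresponding action on $\Grass^{\psi(d), 0}$; since the cells $\bb{A}_\mu$ constitute the Bia{\l}ynicki--Birula paving of $\Grass^{\psi(d), 0}$ for this action (\cite[\S{2.3}]{hikita}, following \cite{gkm}) and the $\bb{A}_\Delta$ are the attracting cells of $\CptJac$ (\Cref{lem:paving}), the map $\phi$ matches the $\bb{A}_\mu$-paving with the $\bb{A}_\Delta$-paving. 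Combined with $\iota(\bb{A}_\mu) = \bb{A}_{\iota(\mu)}$, which follows from the $\iota$-stability of $\Grass^{\psi(d), 0}$ and \Cref{lem:involution}(1), this gives $\phi(\bb{A}_\mu) = \bb{A}_{\Delta(\iota(\mu))}$, where $\Delta(\nu)$ denotes the valuation set of the monomial module attached to $x^\nu \hat{K}$ by \eqref{eq:affine-flag-to-m}. Unwinding \eqref{eq:affine-flag-to-m} using $\abel(\sf{v}_i) = \varpi^{i - 1}$, $x = \varpi^n$, and the shift by $\varpi^\delta$, one finds
\[
\Delta(\nu) = \bigcup_{1 \leq i \leq n} \big(\delta + (i - 1) - n\nu_i + n\bb{Z}_{\geq 0}\big).
\]
Since $\phi$ matches the pavings, the desired equality becomes the assertion that, for all $\mu \in X_\bullet^{\psi(d), 0}$, one has $|a(\mu)| \leq \gap$ if and only if $\min(\Delta(\iota(\mu))) \geq \delta - \gap$; equivalently, $|a(\mu)| = \delta - \min(\Delta(\iota(\mu)))$.

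Finally, I would prove this identity. From the formula for $\Delta(\nu)$ and $\iota(\mu) = (-\mu_n, \ldots, -\mu_1)$ we get $\delta - \min(\Delta(\iota(\mu))) = \max_{1 \leq i \leq n}\big(n\,\iota(\mu)_i - i + 1\big)$. On the other hand, feeding $\iota(\nu)$ into Hikita's recipe \eqref{eq:hikita-a} and simplifying with $\sum_i \nu_i = 0$ (and checking the trivial case $\nu = 0$ separately) gives $|a(\iota(\nu))| = n\nu_{i_0} - i_0 + 1$, where $i_0$ is the least index with $\nu_{i_0} = \max_j \nu_j$; this coincides with $\max_i(n\nu_i - i + 1)$, since for $i$ with $\nu_i = \nu_{i_0}$ the maximum is attained at $i = i_0$, while for $i$ with $\nu_i < \nu_{i_0}$ one has $n\nu_i - i + 1 \leq n(\nu_{i_0} - 1) < n\nu_{i_0} - i_0 + 1$. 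Applying this with $\nu = \iota(\mu)$ gives $|a(\mu)| = \max_i(n\,\iota(\mu)_i - i + 1) = \delta - \min(\Delta(\iota(\mu)))$, completing the reduction and hence the theorem. The main obstacle is exactly this bookkeeping: one must carefully unwind Hikita's combinatorial bijection $a$ of \S\ref{subsec:hikita} together with the $\varpi^\delta$-shifted valuation sets of the monomial modules so that the two statistics line up through $\iota$; the geometric input --- the reduction to $\nu = (n)$, the equivariant matching of the pavings, and the cell-wise action of $\iota$ --- is routine given \Cref{prop:asf-coprime}, \Cref{lem:involution}, \Cref{lem:paving}, and the cited results of \cite{gkm} and \cite{hikita}.
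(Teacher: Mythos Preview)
Your approach is essentially the same as the paper's: reduce to $\nu=(n)$, match the two affine pavings via $\bb{G}_m$-equivariance, then verify the combinatorial identity $|a(\mu)|=\delta-\min(\Delta(\iota(\mu)))$. Your combinatorial computation is a correct rephrasing of the paper's final lemma, just organized through $\iota(\mu)$ rather than $\mu$ directly (your $\Delta(\iota(\mu))$ is exactly the paper's $\Delta(\mu)$ in \eqref{eq:mu-to-delta}).

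There is one genuine gap. You assert that the isomorphism of \Cref{prop:asf-coprime} is $\bb{G}_m$-equivariant and that the cells $\bb{A}_\mu$ are the Bia\l{}ynicki--Birula cells ``for this action,'' citing \cite[\S{2.3}]{hikita}. But Hikita's attracting cells are for the specific action $t\cdot_{1/n} g(x)=t^{2\rho^\vee}g(t^{2n}x)t^{-2\rho^\vee}$ on $\Grass$, not \emph{a priori} for the action transported from $\varpi$-rescaling along \eqref{eq:affine-flag-to-m}. That these two $\bb{G}_m$-actions agree is not automatic: it is precisely the content of the paper's \Cref{prop:1-over-n}, whose proof is a short but necessary computation tracking how $\abel$ and the $\varpi^\delta$-shift interact with the loop-rotation and the $2\rho^\vee$-grading. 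Without it you cannot conclude that $\phi$ carries the $\bb{A}_\mu$-paving onto the $\bb{A}_\Delta$-paving. Once you supply this step, your argument is complete and parallel to the paper's.
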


The proof will occupy the rest of this subsection. Since $\CptJac_{(1^n), \leq \gap}$ and $\hat{\cal{B}}_{(1^n), \leq \gap}^{\psi(d), 0}$ are respectively the preimages of $\CptJac_{\leq \gap}$ and $\Grass_{\leq \gap}^{\psi(d), 0}$, the $\iota$-equivariance of the projection $\hat{\cal{B}}_{(1^n)}^{\psi(d), 0} \to \Grass^{\psi(d), 0}$ and the commutativity of the left square of \eqref{eq:asf-to-pic} allows us to replace $\nu = (1^n)$ with $\nu = (n)$.

We will match the strata $\bb{A}_\mu \subseteq \Grass^{\psi(d), 0}$ from \eqref{eq:strata} with the strata $\bb{A}_\Delta \subseteq \CptJac$ from \Cref{lem:gap-vs-min}; this implies the statement by the definition of the filtrations in question.
Let $- \cdot_{1/n} -$ denote the $\bb{G}_m$-action on $\hat{G}$ defined by
\begin{align}\label{eq:1-over-n}
	t \cdot_{1/n} g(x) \vcentcolon= t^{2\rho^\vee} g(t^{2n}\varpi) t^{-2\rho^\vee}
\end{align}
for all $t \in \bb{G}_m$ and $g \in \hat{G}$.
It descends to a $\bb{G}_m$-action on $\Grass$ that we again denote by $- \cdot_{1/n} - $.
As explained in \cite{hikita, gkm}, we have
\begin{align}
	\bb{A}_\mu &= \{g\hat{K} \in \Grass^{\psi(d), 0} \mid \lim_{t \to 0} {(t \cdot_{1/n} g\hat{K})} = x^\mu \hat{K}\}
	&&\text{for all $\mu \in X_\bullet^0$}.
\end{align}
So to match the strata it suffices to match $- \cdot_{1/n} - $ with the $\bb{G}_m$-action on $\CptJac$ in \S\ref{subsec:gap-coprime}.

\begin{prop}\label{prop:1-over-n}
	The map \eqref{eq:gap-vs-hikita} transports the $\bb{G}_m$-action $- \cdot_{1/n} -$ on $\Grass$ onto the $\bb{G}_m$-action on $\cal{M}_{(n)}$ induced by $t \cdot_2 \varpi \vcentcolon= t^2\varpi$.
\end{prop}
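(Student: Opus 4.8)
\medskip
\noindent\textbf{Proof proposal.} The plan is to reduce the statement to an explicit computation with lattices on $\bb{C}$-points, in the spirit of \Cref{prop:asf-coprime}. First I would observe that both $\bb{G}_m$-actions are compatible with the projections $\hat{\cal{B}}_\nu \to \hat{\cal{B}}_{(n)} = \Grass$ and $\cal{M}_\nu \to \cal{M}_{(n)}$ forgetting the flag, so it suffices to treat $\nu = (n)$, i.e.\ to intertwine $-\cdot_{1/n}-$ on $\Grass$ with the action $t\cdot_2 M = D_t(M)$ on $\cal{M}_{(n)}$, where $D_t\colon\bb{C}(\!(\varpi)\!)\to\bb{C}(\!(\varpi)\!)$ is the ring automorphism $\varpi\mapsto t^2\varpi$. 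One may as well drop the involution $\iota$ from \eqref{eq:gap-vs-hikita} and work with \eqref{eq:affine-flag-to-m} directly, since $\iota$ is itself $\cdot_{1/n}$-equivariant: $\rho^\vee$ is $\iota$-invariant and the substitution defining $\cdot_{1/n}$ commutes with transposition and inversion. Because all the maps in sight are given by the same formulas over an arbitrary base, it is enough to check the resulting identity of morphisms on $\bb{C}$-points, where \eqref{eq:affine-flag-to-m} reads $g\hat{K}\mapsto M_g = \varpi^\delta\cdot\abel\bigl(\bb{C}[\![x]\!]^n\cdot g^{-1}\bigr)$.

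The computation of $M_{t\cdot_{1/n}g}$ then proceeds in three moves. Write $\sigma_t$ for the substitution automorphism of $\bb{C}(\!(x)\!)$ appearing in the definition of $\cdot_{1/n}$, and $\phi_t$ for the automorphism it induces coordinatewise on $\bb{C}(\!(x)\!)^n$. Since $(t\cdot_{1/n}g)^{-1} = t^{2\rho^\vee}\,\sigma_t(g^{-1})\,t^{-2\rho^\vee}$, and since right multiplication of the standard lattice $\bb{C}[\![x]\!]^n$ by the constant invertible matrix $t^{2\rho^\vee}$ is trivial while $\phi_t$ preserves $\bb{C}[\![x]\!]^n$, one gets
\begin{align}
	\bb{C}[\![x]\!]^n\cdot(t\cdot_{1/n}g)^{-1}
	&= \phi_t\bigl(\bb{C}[\![x]\!]^n\cdot g^{-1}\bigr)\cdot t^{-2\rho^\vee}\\
	&= \phi_t\bigl(\bb{C}[\![x]\!]^n\cdot g^{-1}\bigr)\cdot E_t,
\end{align}
where in the last line $E_t$ denotes the diagonal matrix rescaling $\sf{v}_i$ by $t^{2(i-1)}$; the equality holds as submodules because $t^{-2\rho^\vee}$ and $E_t$ differ by the scalar $t^{1-n}$, which acts trivially on lattices.

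The combinatorial input is the intertwining identity $D_t(\abel(w)) = \abel\bigl(\phi_t(w\cdot E_t)\bigr)$ for $w\in\bb{C}(\!(x)\!)^n$, which I would verify on the basis $\abel(\sf{v}_i) = \varpi^{i-1}$: the substitution $\phi_t$ accounts for the rescaling of the ``$x=\varpi^n$'' part, while the factor $t^{2(i-1)}$ built into $E_t$ is precisely what absorbs the power of $t$ that $D_t$ extracts from the monomial $\varpi^{i-1}$. Feeding the lattice $\bb{C}[\![x]\!]^n\cdot g^{-1}$ through this identity and then multiplying by $\varpi^\delta$ yields $M_{t\cdot_{1/n}g} = \varpi^\delta\,D_t\bigl(\abel(\bb{C}[\![x]\!]^n\cdot g^{-1})\bigr)$, which coincides with $t\cdot_2 M_g = D_t(M_g) = (t^2\varpi)^\delta\,D_t\bigl(\abel(\bb{C}[\![x]\!]^n\cdot g^{-1})\bigr)$ up to the unit scalar $t^{2\delta}$ --- again invisible on lattices. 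This would prove the proposition, and with it the matching of strata needed to compare the gap and Hikita filtrations in \Cref{thm:gap-vs-hikita}.

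The only real obstacle is bookkeeping: one must pin down the conventions for the loop rotation $\cdot_{1/n}$ (the precise meaning of ``$t^{2n}\varpi$'' in its definition), for $\abel$, and for $\rho^\vee$, so that the three sources of powers of $t$ --- the conjugation by $t^{\pm 2\rho^\vee}$, the substitution $\sigma_t$, and the basis exponents $\varpi^{i-1}$ of $\abel$ --- collapse to a single unit scalar between $M_{t\cdot_{1/n}g}$ and $t\cdot_2 M_g$. This is exactly the point of the subscript $1/n$: the rotation of $\Grass$ that matches the honest rescaling $\varpi\mapsto t^2\varpi$ downstairs is only algebraic after adjoining the $n$-th root $\varpi = x^{1/n}$, which is why it has to be conjugated by $t^{2\rho^\vee}$ in order to return to $\GL_n\bigl(\bb{C}(\!(x)\!)\bigr)$.
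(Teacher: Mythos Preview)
Your proposal is correct and takes essentially the same approach as the paper's proof: both reduce to $\bb{C}$-points, observe that $\iota$ is $\cdot_{1/n}$-equivariant (since $\iota(t^{2\rho^\vee}) = t^{2\rho^\vee}$), and then perform the explicit lattice computation showing that $M_{t\cdot_{1/n}g}$ and $t\cdot_2 M_g$ agree up to nonzero scalars. The only difference is cosmetic: you package the core step as an intertwining identity $D_t\circ\abel = \abel\circ(\phi_t\cdot E_t)$, whereas the paper writes out the same identity entrywise as $(g'(x)^{-1})_{i,j} = t^{2(j-i)}(g(t^{2n}x)^{-1})_{i,j}$ and tracks the resulting scalar $t^{-2\delta-2(i-1)}$ on each generator.
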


\begin{proof}
	It suffices to work on $\bb{C}$-points.
	First, $\iota$ is equivariant under $- \cdot_{1/n} -$ because $\iota(c^{2\rho^\vee}) = c^{2\rho^\vee}$, so we can replace \eqref{eq:gap-vs-hikita} with \eqref{eq:affine-flag-to-m}.
	Observe that if $g = g(x) \in G(\bb{C}(\!(x)\!))$, and $g'(x) = t \cdot_{1/n} g(x)$ for some $t \in \bb{C}^\times$, then $g'(x)^{-1} = t \cdot_{1/n} g^{-1}(x)$.
	Thus the entries of the matrix $g'(x)^{-1}$ are given by
	\begin{align}
		(g'(x)^{-1})_{i, j}
		= t^{2(j - i)} (g(t^{2n}\varpi)^{-1})_{i, j}.
	\end{align}
	We deduce that
	\begin{align}
		\varpi^\delta\,
		\abel(\sf{v}_i \cdot (t \cdot_{1/n} g(x))^{-1})
		&= 	\varpi^\delta
		\sum_j (g'(\varpi^n)^{-1})_{i, j} \varpi^j\\
		&=	t^{-2(i - 1)} \varpi^\delta
		\sum_j {(g((t^2 \varpi)^n)^{-1})_{i, j}} (t^2 \varpi)^{j - 1}\\
		&= t^{-2\delta - 2(i - 1)} (t \cdot_2 \varpi^\delta\, \abel(\sf{v}_i \cdot g(x)^{-1})).
	\end{align}
	The outer monomials in the first and last expressions are just nonzero scalars.
	So the vector subspaces of $\bb{C}(\!(\varpi)\!)$ formed by $M_{t \cdot_{1/n} g}$ and $t \cdot_2 M_g$ coincide.
\end{proof}

In the notation of \S\ref{subsec:gap-coprime}, let $\Delta : X_\bullet^{\psi(d), 0} \to I^\delta(\SSS)$ be defined by
\begin{align}\label{eq:mu-to-delta}
	\Gen_n(\Delta(\mu)) = \{n\mu_i + n - i + \delta \mid 1 \leq i \leq n\}.
\end{align}
Then the map $x^\mu \mapsto M_{\Delta(\mu)}$ is precisely the effect of \eqref{eq:gap-vs-hikita} on the $(- \cdot_{1/n} -)$-fixed points of $\Grass^{\psi(d), 0}$, as we can check from the definition of $\iota$ and \eqref{eq:affine-flag-to-l}--\eqref{eq:l-to-m}.
Now \eqref{eq:involution-dim} and \Cref{prop:1-over-n} imply:

\begin{cor}
	The map $\Delta : X_\bullet^{\psi(d), 0} \to I^\delta(\SSS)$ is bijective, and for all $\mu \in X_\bullet^{\psi(d), 0}$, \eqref{eq:gap-vs-hikita} restricts to an isomorphism $\bb{A}_\mu \xrightarrow{\sim} \bb{A}_{\Delta(\mu)}$.
\end{cor}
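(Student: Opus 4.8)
The plan is to deduce the Corollary formally from \Cref{prop:1-over-n}, once the two fixed loci are identified with their combinatorial indexing sets. As in the paragraph following \Cref{thm:gap-vs-hikita}, I would first pass from $\nu = (1^n)$ to $\nu = (n)$, so that \eqref{eq:gap-vs-hikita} becomes the isomorphism $\Grass^{\psi(d), 0} \xrightarrow{\iota} \Grass^{\psi(d), 0} \xrightarrow{\text{Prop \ref{prop:asf-coprime}}} \CptJac$. By \Cref{prop:1-over-n}, this carries $- \cdot_{1/n} -$ on the source onto the action $t \cdot_2 \varpi = t^2\varpi$ on $\CptJac$, and the latter has the same fixed points and the same attracting cells as the action $t \cdot \varpi = t\varpi$ of \S\ref{subsec:gap-coprime}, since reparametrising $t \mapsto t^2$ does not alter limits as $t \to 0$. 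Because \eqref{eq:gap-vs-hikita} is a $\bb{G}_m$-equivariant isomorphism, it matches fixed loci bijectively and matches the attracting set of each fixed point with the attracting set of its image.

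Second, I would pin down the fixed loci. On the source, the $(- \cdot_{1/n} -)$-fixed points of $\Grass$ are the points $x^\mu \hat{K}$ with $\mu \in X_\bullet$; since $\Grass^{\psi(d), 0}$ is closed and $\bb{G}_m$-stable, $x^\mu \hat{K}$ lies in it exactly when the attracting cell $\bb{A}_\mu = \Grass^{\psi(d), 0} \cap \Grass_\mu$ is nonempty, i.e.\ exactly when $\mu \in X_\bullet^{\psi(d), 0}$, and distinct $\mu$ give distinct fixed points since the $\Grass_\mu$ are disjoint. On the target, $\Delta \mapsto M_\Delta$ is a bijection $I^\delta(\SSS) \xrightarrow{\sim} \CptJac^{\bb{G}_m}$ by \cite[\S{3}]{piontkowski}. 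By the computation already carried out just above the Corollary, using the definition of $\iota$ and \eqref{eq:affine-flag-to-l}--\eqref{eq:l-to-m}, the fixed-point bijection induced by \eqref{eq:gap-vs-hikita} is exactly $x^\mu \hat{K} \mapsto M_{\Delta(\mu)}$ in the notation of \eqref{eq:mu-to-delta}. Composing $X_\bullet^{\psi(d), 0} \xrightarrow{\sim} (\Grass^{\psi(d), 0})^{\bb{G}_m} \xrightarrow{\sim} \CptJac^{\bb{G}_m} \xrightarrow{\sim} I^\delta(\SSS)$ recovers $\Delta$, so $\Delta$ is a bijection.

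Third, I would conclude the statement about strata: since the attracting set of $x^\mu \hat{K}$ inside $\Grass^{\psi(d), 0}$ is $\bb{A}_\mu$ and the attracting set of $M_{\Delta(\mu)}$ inside $\CptJac$ is $\bb{A}_{\Delta(\mu)}$, the equivariant isomorphism \eqref{eq:gap-vs-hikita} restricts to an isomorphism $\bb{A}_\mu \xrightarrow{\sim} \bb{A}_{\Delta(\mu)}$; that both are affine spaces of the same dimension follows from the pavings of \Cref{lem:paving} and \S\ref{subsec:hikita} together with \eqref{eq:involution-dim}. Summing these cell isomorphisms over $\mu$ also re-proves that \eqref{eq:gap-vs-hikita} is an isomorphism compatible with the two stratifications, which is exactly what is needed back in \Cref{thm:gap-vs-hikita}.

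The step I expect to demand the most care is the bookkeeping that makes ``attracting set'' and the Bia\l{}ynicki--Birula-type matching meaningful for the ind-scheme $\Grass^{\psi(d), 0}$: I would handle this by working inside a $\bb{G}_m$-stable finite-type subscheme, which here is automatic on the target since $\CptJac$ is already a projective variety, and transport it along the isomorphism to the source. The one genuinely computational ingredient --- checking that the composite of the three fixed-point bijections is literally the map \eqref{eq:mu-to-delta} written in terms of $\Gen_n$ --- has essentially been done already, so at this stage it only needs to be invoked rather than redone.
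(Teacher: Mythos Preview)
Your proposal is correct and takes essentially the same approach as the paper. The paper gives no explicit proof here, simply stating that the Corollary follows from \eqref{eq:involution-dim} and \Cref{prop:1-over-n}; you have spelled out exactly the implicit argument---$\bb{G}_m$-equivariance of \eqref{eq:gap-vs-hikita} forces matching of fixed points and attracting cells, and the fixed-point map has already been computed to be $\Delta$---together with the useful observation that the $t^2$-reparametrization of the torus action does not change limits as $t \to 0$.
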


To finish the proof of \Cref{thm:gap-vs-hikita}, it remains to show that for all $\mu \in X_\bullet^{\psi(d), 0}$, we have $|a(\mu)| = \gap(M_{\Delta(\mu)})$.
By \Cref{lem:gap-vs-min} and \eqref{eq:mu-to-delta}, this is equivalent to:

\begin{lem}
	For all $\mu \in X_\bullet^{\psi(d), 0}$, we have
	\begin{align}
		|a(\mu)| = -{\min\{n\mu_i + n - i \mid 1 \leq i \leq n\}}.
	\end{align}
\end{lem}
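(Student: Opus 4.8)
The plan is to prove the identity by a direct bookkeeping computation with the explicit description of the bijection $a$ in \eqref{eq:hikita-a}, reducing both sides to a formula in terms of the largest index at which $\mu$ attains its minimum coordinate. Since the hypothesis $\mu \in X_\bullet^{\psi(d), 0}$ plays no role, I would in fact prove the statement for every $\mu \in X_\bullet^0$.

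First I would dispose of the case $\mu = 0$, where $a(\mu) = 0$ by fiat, so $|a(\mu)| = 0$, while $\min\{n\mu_i + n - i \mid 1 \le i \le n\} = \min\{n - 1, n - 2, \ldots, 0\} = 0$. For $\mu \neq 0$, I would write $m = \min_i \mu_i$ and let $k$ be the largest index with $\mu_k = m$, exactly as in the definition of $a$. Summing \eqref{eq:hikita-a} over its $n - 1$ coordinates gives
\[
|a(\mu)| = \sum_{i = k+1}^{n}(\mu_i - \mu_k - 1) + \sum_{i=1}^{k-1}(\mu_i - \mu_k) = \Big(\sum_{i=1}^{n}\mu_i - \mu_k\Big) - (n - 1)\mu_k - (n - k),
\]
and since $\sum_i \mu_i = 0$ this collapses to $|a(\mu)| = -n\mu_k - (n - k) = -(n\mu_k + n - k)$.

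It then remains to identify $\min\{n\mu_i + n - i \mid 1 \le i \le n\}$ with $n\mu_k + n - k$. Among indices $i$ with $\mu_i = m$, the quantity $n\mu_i + n - i = nm + n - i$ is smallest at the largest such $i$, which is $i = k$ by the choice of $k$, yielding $nm + n - k = n\mu_k + n - k$. For indices $i$ with $\mu_i > m$ one has $\mu_i \ge m + 1$, hence $n\mu_i + n - i \ge nm + 2n - i > nm + n - k$, using $i \le n$ and $k \ge 1$. Thus the minimum equals $n\mu_k + n - k$, and combining with the previous paragraph yields $|a(\mu)| = -\min\{n\mu_i + n - i \mid 1 \le i \le n\}$. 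The argument is routine; the only step requiring any care is the strict inequality for indices with $\mu_i > m$, where one must verify that the negative shift $-i$ cannot overcome the gain of $n$ in $n\mu_i$, which is immediate from $1 \le i \le n$ and $k \ge 1$.
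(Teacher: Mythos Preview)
Your proof is correct and follows essentially the same approach as the paper: split into the cases $\mu = 0$ and $\mu \neq 0$, sum the coordinates in \eqref{eq:hikita-a}, and use $\sum_i \mu_i = 0$ to obtain $|a(\mu)| = -(n\mu_k + n - k)$. The only difference is that you explicitly verify $n\mu_k + n - k = \min_i\{n\mu_i + n - i\}$, a step the paper leaves implicit.
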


\begin{proof}
	If $\mu = (0, \ldots, 0)$, then both sides equal $0$.
	If $\mu \neq (0, \ldots, 0)$, then \eqref{eq:hikita-a} gives
	\begin{align}
		|a(\mu)| = (\mu_1 + \cdots + \mu_n) - (n\mu_k + n - k),
	\end{align}
	where $k$ is the largest index in $\{1, \ldots, n\}$ such that $\mu_k = \min_i \mu_i$.
	Since $\mu \in X_\bullet^0$, the right-hand side above simplifies to $-(n\mu_k + n - k)$.
\end{proof}

\begin{rem}
	It is natural to ask what the involution $\iota$ on $\Grass^{\psi(d), 0}$ looks like after being transported through \eqref{eq:affine-flag-to-m}, to an involution on $\CptJac$.
	From \eqref{eq:mu-to-delta}, we can check that it is precisely the duality that Gorsky--Mazin denote by $\Delta \mapsto \widehat{\Delta}$ in \cite{gm_14}.
	Explicitly, for any $\Delta \in I^\delta(\SSS)$, we have $\Gen_n(\widehat{\Delta}) = \{d(n - 1) - k \mid k \in \Gen_n(\Delta)\}$.
\end{rem}

\begin{ex}\label[ex]{ex:min-delta}
	Take $(n, d) = (3, 4)$.
	We compute 
	\begin{align}
		X_\bullet^{\psi(d), 0}
		&= \{(0, 0, 0), (-1, 0, 1), (-1, 1, 0), (0, -1, 1), (1, 0, -1)\},\\
		I^\delta(\SSS)
		&= \{\Delta_{3, 4, 5}, \Delta_{6, 4, 2}, \Delta_{3, 7, 2}, \Delta_{6, 1, 5}, \Delta_{0, 4, 8}\}.
	\end{align}
	Above, we have labeled the elements of $I^\delta(\SSS)$ in the form $\Delta_{b_1, b_2, b_3}$, where $\Gen_n = \{b_1, b_2, b_3\}$ and $b_i \equiv i - 1 \pmod{n}$ for all $i$.
	Compare the statistics below to \Cref{ex:3-4}:
	\begin{align}
		\begin{array}{lllllll}
			\mu 
			&a(\mu)
			&|a(\mu)|
			&(n\mu_i + n - i)_i
			&\Delta(\mu)
			&\min(\Delta(\mu))\\
			\hline
			(0, 0, 0)
			&(0, 0)
			&0
			&(2, 1, 0)
			&\Delta_{3,4,5}
			&3\\
			(-1, 0, 1)
			&(0, 1)
			&1
			&(-1, 1, 3)
			&\Delta_{6,4,2}
			&2\\
			(-1, 1, 0)
			&(1, 0)
			&1
			&(-1, 4, 0)
			&\Delta_{3,7,2}
			&2\\
			(0, -1, 1)
			&(1, 1)
			&2
			&(2, -2, 3)
			&\Delta_{6,1,5}
			&1\\
			(1, 0, -1)
			&(2, 1)
			&3
			&(5, 1, -3)
			&\Delta_{0,4,8}
			&0
		\end{array}
	\end{align}
\end{ex}

\subsection{The Perverse Filtration}

We return to the setup of \S\ref{subsec:gap}, where $f(x, y)$ is arbitrary.
For simplicity, we ignore the map to the $x$-axis in what follows.
Besides $\sf{Q}^{\geq \ast}$, there is another filtration on the cohomology of $\CptPic/\Lattice$, defined as follows by Maulik--Yun:

Fix a complex, integral, projective curve $C$, whose normalization has genus zero, and which is smooth away from a unique planar singularity given in local coordinates by $f(x, y) = 0$.
We emphasize that while $C$ is integral, the germ $f$ can still have multiple branches.
Fix an embedding of $C$ into a family of curves $\cal{C}$, whose base is irreducible, and which satisfies conditions (A1)--(A4) in \cite[\S{2.1}]{my}.

Let $\CptJac(C)$ be the compactified Jacobian of $(C, s)$ \cite{aik}.
In this setting, Section 2.14 of \cite{my} defines an increasing perverse filtration $\sf{P}_{\leq \ast}$ on $\ur{H}^\ast(\CptJac(C))$, in terms of the perverse truncation of the pushforward of the constant sheaf along the structure map of $\CptJac(\cal{C})$.
Proposition 2.15 of \cite{my} shows that $\sf{P}_{\leq \ast}$ is invariant under base change of the family of curves, so it is canonical.
It is strictly compatible with the weight filtration $\sf{W}_{\leq \ast}$.
Finally, the proof of Theorem 3.11 of \cite{my} shows that there is a weight-preserving isomorphism $\ur{H}^\ast(\CptJac(C)) \simeq \ur{H}^\ast(\CptPic/\Lattice)$, canonical up to the choice of uniformization that defines the $\Lattice$-action on $\CptPic$.
Following Maulik--Yun, we normalize $\sf{P}_{\leq \ast}$ so that it sits in degrees $0$ through $2\delta$.

For any filtration $\sf{F}_{\leq \ast}$ on the cohomology of $\CptPic/\Lattice$, strictly compatible with the weight filtration, we may form the \dfemph{virtual Poincar\'e polynomial}
\begin{align}
	\sf{P}^{\virtual, \sf{F}}(\sf{q}, \sf{t})
	= \sum_{i, j, k}
	{(-1)^i}
	\sf{q}^j
	\sf{t}^k
	\dim 
	\gr_j^\sf{F}
	\gr_k^\sf{W}
	\ur{H}^i(\CptPic/\Lattice).
\end{align}
Explicitly, Theorem 3.11 of \cite{my} and Theorem \ref{thm:main} imply that
\begin{align}
	\sf{Hilb}(\sf{q}, \sf{t}) = \frac{1}{(1 - \sf{q})^b}\,
	\sf{P}^{\virtual, \sf{P}}(\sf{q}, \sf{t})
	\quad\text{and}\quad
	\sf{Quot}(\sf{q}, \sf{t})= \frac{1}{(1 - \sf{q})^b}\,
	\sf{P}^{\virtual, \sf{Q}}(\sf{q}, \sf{t}).
\end{align}
We deduce that:
\begin{cor}\label{cor:perverse}
	\Cref{conj:main} is equivalent to $\sf{P}^{\virtual, \sf{P}}(\sf{q}, \sf{t})
	= \sf{P}^{\virtual, \sf{Q}}(\sf{q}, \sf{q}^{\frac{1}{2}}\sf{t})$.
\end{cor}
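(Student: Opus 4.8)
The plan is to deduce \Cref{cor:perverse} formally from the two displayed identities
\begin{align}
	\sf{Hilb}(\sf{q}, \sf{t}) = \frac{1}{(1 - \sf{q})^b}\,\sf{P}^{\virtual, \sf{P}}(\sf{q}, \sf{t})
	\quad\text{and}\quad
	\sf{Quot}(\sf{q}, \sf{t}) = \frac{1}{(1 - \sf{q})^b}\,\sf{P}^{\virtual, \sf{Q}}(\sf{q}, \sf{t})
\end{align}
recorded just above the corollary, so the real work is to establish those two identities. For the first, I would invoke Theorem 3.11 of \cite{my}: it supplies a weight-preserving isomorphism $\ur{H}^\ast(\CptJac(C)) \simeq \ur{H}^\ast(\CptPic/\Lattice)$ under which the perverse filtration transports to $\sf{P}_{\leq\ast}$, together with the identity expressing $\sf{Hilb}(\sf{q},\sf{t})$ as $(1-\sf{q})^{-b}$ times the $(\sf{P},\sf{W})$-bigraded virtual Poincar\'e polynomial. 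For the second, I would take the virtual weight specialization of \Cref{thm:main} with $\nu$ trivial, namely $\sf{Quot}(\sf{q},\sf{t}) = (1-\sf{q})^{-b}\sum_\gap \sf{q}^\gap\,\chi(\CptPic(\gap)/\Lattice,\sf{t})$, and combine it with the observation that the stratification $\{\CptPic(\gap)/\Lattice\}_\gap$ of \Cref{cor:gap} induces precisely the filtration $\sf{Q}_{\leq\ast}$ on the cohomology of the proper variety $\CptPic/\Lattice$; then the usual additivity of virtual weight polynomials over strata, refined by the filtration, gives $\sum_\gap \sf{q}^\gap\,\chi(\CptPic(\gap)/\Lattice,\sf{t}) = \sf{P}^{\virtual,\sf{Q}}(\sf{q},\sf{t})$.

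Granting these, the corollary is pure bookkeeping. Substituting $\sf{t} \mapsto \sf{q}^{\frac{1}{2}}\sf{t}$ into the second identity and using that the prefactor $(1-\sf{q})^{-b}$ is independent of $\sf{t}$ yields $\sf{Quot}(\sf{q},\sf{q}^{\frac{1}{2}}\sf{t}) = (1-\sf{q})^{-b}\,\sf{P}^{\virtual,\sf{Q}}(\sf{q},\sf{q}^{\frac{1}{2}}\sf{t})$. Hence \Cref{conj:main}, which asserts $\sf{Hilb}(\sf{q},\sf{t}) = \sf{Quot}(\sf{q},\sf{q}^{\frac{1}{2}}\sf{t})$, is equivalent to $(1-\sf{q})^{-b}\,\sf{P}^{\virtual,\sf{P}}(\sf{q},\sf{t}) = (1-\sf{q})^{-b}\,\sf{P}^{\virtual,\sf{Q}}(\sf{q},\sf{q}^{\frac{1}{2}}\sf{t})$, and since $(1-\sf{q})^b$ is a unit in the coefficient ring in which all of these series live (a ring of formal power series in $\sf{q}^{\frac{1}{2}}$ over $\bb{Z}[\sf{t}^{\pm 1}]$), one may cancel it to obtain $\sf{P}^{\virtual,\sf{P}}(\sf{q},\sf{t}) = \sf{P}^{\virtual,\sf{Q}}(\sf{q},\sf{q}^{\frac{1}{2}}\sf{t})$. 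The implication runs in both directions verbatim.

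I do not expect a genuine obstacle here: the substantive inputs are \Cref{thm:main} and Maulik--Yun's perverse--Hilbert comparison, both already available, after which the argument is formal. If anything, the two points that warrant careful wording are (i) that the gap filtration $\sf{Q}$ is strictly compatible with the weight filtration $\sf{W}$, which is automatic since $\sf{Q}$ is a filtration by sub-mixed-Hodge-structures arising from pushforwards and restrictions along maps of honest (locally closed) subvarieties, so that $\sf{P}^{\virtual,\sf{Q}}$ is well defined; and (ii) that the normalizations of $\sf{P}_{\leq\ast}$ and $\sf{Q}_{\leq\ast}$ both place them in degrees $0$ through $2\delta$, matching the powers $\sf{q}^0,\dots,\sf{q}^\delta$ on both sides, so the final comparison is a literal equality of polynomials and not merely one up to a shift --- both of which are guaranteed by the conventions fixed in \S\ref{subsec:gap} and in the preceding discussion of $\sf{P}_{\leq\ast}$.
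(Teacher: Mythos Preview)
Your proposal is correct and takes essentially the same approach as the paper: the paper states the two displayed identities as consequences of \cite[Thm.\ 3.11]{my} and \Cref{thm:main}, then says ``We deduce that:'' before the corollary, and your proof simply makes explicit the formal bookkeeping behind that deduction. Your additional remarks on strict compatibility of $\sf{Q}$ with $\sf{W}$ and on the additivity argument linking $\sum_\gap \sf{q}^\gap \chi(\CptPic(\gap)/\Lattice,\sf{t})$ to $\sf{P}^{\virtual,\sf{Q}}$ are exactly the points the paper leaves implicit.
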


It is natural to make the following stronger conjecture, which also extends a conjecture in unpublished notes of Yun beyond the unibranch case.
\begin{conj}\label{conj:perverse}
	The weight grading on $\gr_\ast^\sf{W} \ur{H}^\ast(\CptPic/\Lattice)$ is supported in even degrees.
	Moreover, $\gr_{j + k}^\sf{P} \gr_{2k}^\sf{W} \ur{H}^\ast(\CptPic/\Lattice) \simeq \gr_\sf{Q}^j \gr_{2k}^\sf{W} \ur{H}^\ast(\CptPic/\Lattice)$ for all $j, k$.
\end{conj}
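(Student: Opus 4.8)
The plan is to deduce \Cref{conj:perverse} from two separate inputs: an \emph{evenness} statement for the mixed Hodge structure on $\ur{H}^\ast(\CptPic/\Lattice)$, and a geometric identification of Maulik--Yun's perverse filtration $\sf{P}_{\leq\ast}$ with a reindexing of the gap filtration. For the first input, when $\RRR$ carries a $\bb{C}^\times$-action one argues directly: the affine paving of $\CptPic/\Lattice$ by the cells $\bb{A}_\Delta$ of \Cref{lem:paving} (equivalently, the paving of the affine Springer fibre in \Cref{prop:asf-coprime}) shows that $\ur{H}^\ast(\CptPic/\Lattice)$ is pure of Tate type, concentrated in even cohomological degree, so $\gr_\ast^\sf{W}$ is supported in even degrees and in fact $\gr_{2k}^\sf{W}\ur{H}^\ast(\CptPic/\Lattice)=\ur{H}^{2k}(\CptPic/\Lattice)$. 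For general plane curve germs one would try to reduce to this case: by \cite{my} (and the accompanying support theorem) the mixed Hodge structure on $\ur{H}^\ast(\CptPic/\Lattice)$ depends only on the topological type of the link $L_f$, so one degenerates, inside the Maulik--Yun family $\cal{C}\to B$, to a fibre realising the same link by a curve with a $\bb{C}^\times$-action wherever this is possible, invoking the decomposition theorem and strictness of $\sf{W}$ against $\sf{P}$ to propagate evenness.

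For the second input, the point is to make the function $\gap$ visible inside Maulik--Yun's construction of $\sf{P}_{\leq\ast}$. Recall that $\sf{P}_{\leq\ast}$ is defined through the perverse truncation of $Rf_\ast\bb{Q}$ for the relative compactified Jacobian $f\colon\overline{\mathrm{Jac}}(\cal{C})\to B$, and that the support theorem underlying \cite{my} forces every summand of the decomposition to be an $\mathrm{IC}$ sheaf supported on all of $B$. Unwinding this, $\gr_\ast^\sf{P}\ur{H}^\ast(\CptJac(C))$ is governed by the $\delta$-constant stratification of $B$, whose \emph{local} invariant at a point $M\in\CptPic/\Lattice$ is precisely the codimension of $M$ in $\SSS M$, namely $\gap(M)$ — exactly the invariant that drives \Cref{lem:gap-vs-min} and the proof of \Cref{thm:main}. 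So the plan is to show that, after transporting $\ur{H}^\ast(\CptJac(C))$ to $\ur{H}^\ast(\CptPic/\Lattice)$ via \cite{my}, the level $\sf{P}_{\leq k}$ is cut out by the subvarieties $\CptPic_{\leq\gap}/\Lattice$ as $\gap$ ranges over the appropriate interval, the shift by $k$ on $\gr_{2k}^\sf{W}$ arising from the standard relation between perversity and the dimension of the supporting stratum. Combined with the evenness input this yields the isomorphisms $\gr_{j+k}^\sf{P}\gr_{2k}^\sf{W}\ur{H}^\ast(\CptPic/\Lattice)\simeq\gr_\sf{Q}^j\gr_{2k}^\sf{W}\ur{H}^\ast(\CptPic/\Lattice)$, and as a by-product (summing over weights via \Cref{cor:perverse}) re-proves \Cref{conj:main}. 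As a consistency check, relative hard Lefschetz gives $\sf{P}$ a symmetry about $\delta$; on the $\sf{Q}$-side this should correspond to the Gorsky--Mazin duality $\Delta\mapsto\widehat\Delta$, equivalently the involution $\iota$ of \Cref{lem:involution}, and matching these is forced by the comparison but worth verifying independently.

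For the torus-knot cases $\RRR=\bb{C}[\![\varpi^n,\varpi^d]\!]$ with $d$ coprime to $n$ this programme is essentially in hand: evenness is immediate from the paving, and \Cref{thm:gap-vs-hikita} together with \Cref{lem:involution} and proof (B) of case (1) of \Cref{thm:ors-quot} in \S\ref{subsec:proof-b} already identify $\sf{Q}$ with Hikita's filtration on the $\SL_n$ affine Springer fibre; so \Cref{conj:perverse} would follow from the identification of Hikita's filtration with the perverse filtration. The main obstacle is the general case, and it is twofold. First, evenness of $\gr_\ast^\sf{W}\ur{H}^\ast(\CptPic/\Lattice)$ has no known proof when $f$ is neither quasi-homogeneous nor a suitable join of such germs: it amounts to a parity statement for a generic (non-equivalued) $\GL_n$ affine Springer fibre, where affine pavings are unavailable, and is roughly as hard as parity of HOMFLY homology. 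Second, and more structurally, $\sf{Q}$ is defined intrinsically and locally on $\CptPic/\Lattice$ while $\sf{P}$ is extracted from a global family, and a deformation argument connecting an arbitrary germ to a torus one is not available, since the topological type — hence $\ur{H}^\ast(\CptPic/\Lattice)$ with both of its filtrations — is locally constant in families; so one must either produce a global avatar of $\gap$ compatible with $\cal{C}\to B$, or give a new purely local description of $\sf{P}$ that manifestly recovers the $\delta$-constant stratification fibrewise. Finding such a bridge between the two descriptions of $\sf{P}$ is where I expect the real difficulty to lie.
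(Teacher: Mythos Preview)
The statement you are addressing is \Cref{conj:perverse}, which the paper explicitly records as an \emph{open conjecture}; the surrounding text only explains why it is natural and that it would imply \Cref{conj:main}, and notes that it extends an unpublished conjecture of Yun. There is no proof in the paper to compare your proposal against.

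Your write-up is, correspondingly, not a proof but a strategy outline, and you correctly flag its two main gaps at the end. A few more specific comments. First, the second paragraph contains a real confusion: the $\delta$-constant stratification lives on the base $B$ of the Maulik--Yun family, whereas $\gap$ is a function on $\CptPic/\Lattice$; these are different spaces, and the sentence ``whose local invariant at a point $M\in\CptPic/\Lattice$ is precisely $\gap(M)$'' conflates them. The perverse filtration in \cite{my} comes from perverse truncation of a pushforward along $\CptJac(\cal{C})\to B$, and there is no known mechanism that reads off the perverse degree of a class from the value of $\gap$ on a cycle representing it. Second, even in the quasi-homogeneous case you do not close the loop: you reduce to ``identifying Hikita's filtration with the perverse filtration,'' but that identification is itself an open problem of $P=W$ type for affine Springer fibers, not something established in the paper or in the references you cite. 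Third, the deformation argument sketched for evenness in the general case cannot work as stated: the link type, and hence the isomorphism type of $\ur{H}^\ast(\CptPic/\Lattice)$ with its filtrations, is locally constant in equisingular families, so one cannot deform an arbitrary germ to a $\bb{C}^\times$-equivariant one while staying in a family where \cite{my} applies. You note this yourself, but it means the first ``input'' is already unavailable outside the toric case.

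In short: your outline identifies the right objects and the right difficulties, but none of the steps that would constitute a proof are actually carried out, and the key reduction in the middle paragraph rests on a category error about which space is being stratified.
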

The motivation behind \Cref{conj:perverse} is that it would strictly imply the statement above, and hence, \Cref{conj:main}.
We emphasize again that while $\sf{P}_{\leq \ast}$ is defined via auxiliary global methods, $\sf{Q}^{\geq \ast}$ is intrinsic and purely local.
For this reason, \Cref{cor:perverse} seems remarkable to us.

\appendix
\section{Gradings on Link Homology}\label{sec:conventions}

\subsection{}

In this appendix, we specify our grading conventions for Khovanov--Rozansky homology; compare them to those of other published works; and illustrate on the smallest examples (unknot, Hopf link, trefoil, $(3, 4)$ torus knot) to aid the reader's sanity.
Our exposition closely follows \cite[\S{1.6}]{GH17}, but we correct some mistakes:
See \Crefrange{rem:mistake-dgr}{rem:mistake-misc}.

\subsection{Soergel Bimodules}

Let $T = \bb{G}_m^n$, and let $\bb{S} \vcentcolon= \ur{H}_{T}^\ast(\point) = \bb{C}[t_1, \ldots, t_n]$.
We regard $\bb{S}$ as a graded ring, with $\deg(t_i) = 2$ for all $i$.
Thus the $S_n$-action on $T$ that permutes coordinates also preserves the grading on $\bb{S}$.
Let $s_i \in S_n$ be the transposition that swaps $t_i$ and $t_{i + 1}$.

In the category of graded $\bb{S}$-bimodules, we write $(m)$ for the grading shift $\bb{B}(m)^i = \bb{B}^{i + m}$.
Let $\Bim{\bb{S}}$ be the full subcategory generated by the identity bimodule $\bb{S}$ and the bimodules $\bb{S} \otimes_{\bb{S}^{s_i}} \bb{S}(1)$ for all $i$ under isomorphisms, direct sums, tensor products $\otimes = \otimes_{\bb{S}}$, direct summands, and grading shifts.
Objects of $\Bim{\bb{S}}$ are called \dfemph{Soergel bimodules}.
We write $\sf{K}^b(\Bim{\bb{S}})$ for the bounded homotopy category, a monoidal additive category under $\otimes$.

Let $\Br_n$ be the group of braids on $n$ strands up to isotopy.
Any braid $\beta \in \Br_n$ defines an object $\bar{\cal{T}}_\beta \in \sf{K}^b(\Bim{\bb{S}})$ called the Rouquier complex of $\beta$.
See, \emph{e.g.}, \cite[\S{2.1}]{GH17} for the precise definition.

Let $\Vect_2$ be the category of $\bb{Z}^2$-graded vector spaces that are finite-dimensional in each bidegree, such that the first grading is bounded below and the second is bounded.
Let $\overline{\sf{HH}} = \overline{\sf{HH}}^{\ast, \ast} : \Bim{\bb{S}} \to \Vect_2$ be the \dfemph{Hochschild cohomology} functor:
\begin{align}
	\overline{\sf{HH}}^{i,j}(\bb{B}) = \Ext_{\bb{S} \otimes_\bb{C} \bb{S}^\op}^i(\bb{S}, \bb{B}(j)).
\end{align}
These Ext's can be computed using a Koszul resolution of $\bb{S}$ over $\bb{S} \otimes_\bb{C} \bb{S}^\op$, which shows that the Ext grading sits in degrees $0$ through (at most) $n$.

Let $\Vect_3$ be the category of $\bb{Z}^3$-graded vector spaces that are finite-dimensional in each tridegree, such that the first grading is bounded below and the other two gradings are bounded.
Let $\overline{\HHH} = \overline{\HHH}^{\ast, \ast, \ast}$ be the composition of functors
\begin{align}
	\sf{K}^b(\Bim{\bb{S}}) \xrightarrow{\overline{\sf{HH}}} \sf{K}^b(\sf{Vect}_2) \xrightarrow{\ur{H}^\ast} \Vect_3.
\end{align}
Explicitly, the gradings are ordered so that $\overline{\HHH}^{I, J, K} = \ur{H}^k(\overline{\sf{HH}}_n^{I, J})$.

The story above can be redone with the quotient torus $T_0 \vcentcolon= T/T^{S_n}$ in place of $T$.
Note that $T_0$ is just the image of $T$ along the quotient map $\GL_n \to \PGL_n$.
Replacing $T$ with $T_0$ entails replacing $\bb{S}$ with its subring $\bb{S}_0 \vcentcolon= \ur{H}_{T_0}^\ast(\point)$.
We write $\cal{T}_\beta$, $\sf{HH}$, $\HHH$ for the objects that respectively replace $\bar{\cal{T}}_\beta$, $\overline{\sf{HH}}$, $\overline{\HHH}$.

Let $L$ be the link closure of $\beta$.
In \cite{kh}, Khovanov proved that $\HHH(\cal{T}_\beta)$ matches the \dfemph{reduced} version of the triply-graded homology of $L$ proposed in \cite{dgr} and constructed in \cite{khr}, up to an affine transformation of the trigrading.
One can show that
\begin{align}\label{eq:unreduced-vs-reduced-homology}
	\overline{\HHH}(\bar{\cal{T}}_\beta) \simeq \overline{\HHH}(\bar{\cal{T}}_{\id}) \otimes \HHH(\cal{T}_\beta),
\end{align}
and that in consequence, $\overline{\HHH}(\bar{\cal{T}}_\beta)$ matches the \dfemph{unreduced} version of the homology constructed in \cite{khr}, up to similar regradings.

\subsection{The Main Dictionary}

For any $\beta \in \Br_n$, let
\begin{align}
	\overline{\hhh}_\beta(A, Q, T) 
	&= \sum_{I,J,K}
	A^I Q^J T^K\, 
	\dim \overline{\HHH}^{I, J, K}(\bar{\cal{T}}_\beta),\\[1ex]
	\hhh_\beta(A, Q, T) 
	&= \sum_{I,J,K}
	A^I Q^J T^K\,
	\dim \HHH^{I, J, K}(\cal{T}_\beta).
\end{align}
That is:
\begin{enumerate}
	\item 	$\overline{\hhh}_\beta(A, Q, T)$ is the series denoted $\cal{P}_\beta(Q, A, T)$ in \cite[\S{A}]{eh} and \cite[\S{1.6}]{GH17}, and $\hhh_\beta$ is the analogue of $\overline{\hhh}$ for reduced homology.
\end{enumerate}
We write:
\begin{enumerate}\setcounter{enumi}{1}
	\item 	$\bar{\cal{P}}_L^\norm(A, Q, T)$ for the series denoted $\cal{P}_L^\norm(Q, A, T)$ in \cite{GH17}.
	
	\item 	$\cal{P}_{L, \ORS}(a, q, t)$ for the series denoted $\cal{P}(L)$ in \cite{ors}.
	It is denoted $\cal{P}(L^-)$ in \cite{dgr}, where $L^-$ is the chiral mirror of $L$.
	
	\item 	$\bar{\cal{P}}_{L, \ORS}(a, q, t)$ for the series denoted $\bar{\cal{P}}(L)$ in \cite{ors}, which satisfies
	\begin{align}\label{eq:unreduced-vs-reduced-series}
		\bar{\cal{P}}_{L, \ORS}(a, q, t) = \bar{\cal{P}}_{U, \ORS}(a, q, t) \cal{P}_{L, \ORS}(a, q, t).
	\end{align}

\end{enumerate}

\begin{rem}\label{rem:mistake-dgr}
	Contrary to statements suggested by \cite[651]{ors} and \cite[\S{1.6}]{GH17}, the series $\bar{\cal{P}}_{L, \ORS}$ does \emph{not} match the series called the unreduced superpolynomial of $L^-$ and denoted $\bar{\cal{P}}(L^-)$ in \cite{dgr}, even after further regrading.
	Indeed, the series denoted $\cal{P}(L^-)$ and $\bar{\cal{P}}(L^-)$ in \cite{dgr} are \emph{not} proportional to each other by any constant factor, as can be checked from Propositions 6.1 and 6.2 of \cite{dgr}.
\end{rem}

Let $e$ be the \dfemph{writhe} of $\beta$, meaning its net number of crossings counted with sign, and let $b$ be the number of components of $L$.
After correction, \cite[\S{1.6}]{GH17} states:
\begin{align}
	\bar{\cal{P}}_L^\norm(A, Q, T)
	&=
	(A^{\frac{1}{2}})^{e - n + b}
	Q^{-e + 2n - 2b}
	(T^{\frac{1}{2}})^{-e - n + b}\,
	\overline{\hhh}_\beta(A, Q, T),
	\\[1ex]
	\label{eq:ors-vs-norm-hhh}
	\bar{\cal{P}}_{L, \ORS}(a, q, t)
	&=
	a^{-b} q^b\,
	\bar{\cal{P}}_L^\norm(a^2 q^2 t, q, t^{-1})\\
	&=
	a^{e - n} q^n t^e\,
	\overline{\hhh}_\beta(a^2 q^2 t, q, t^{-1}).
\end{align}
By combining the last identity above with \eqref{eq:unreduced-vs-reduced-homology}--\eqref{eq:unreduced-vs-reduced-series}, we get a reduced version:
\begin{align}
	\cal{P}_{L, \ORS}(a, q, t)
	&=
	a^{e - n + 1} 
	q^{n - 1}
	t^e\,
	\hhh_\beta(a^2 q^2 t, q, t^{-1}).
\end{align}
In general, we will not work with $\bar{\cal{P}}_L^\norm$.
Moreover, we will not discuss at all the normalizations used in the series $\cal{P}(U), \cal{P}(T(2, 3))$ in \cite[Rem.\ 1.27]{GH17}.

\begin{rem}\label{rem:mistake-misc}
	Above, \eqref{eq:ors-vs-norm-hhh} fixes a few more typos in \cite[\S{1.6}]{GH17}:
	
	First, the discussion on \cite[599]{GH17} relates their series $\cal{P}_L^\norm$ to the series we call $\bar{\cal{P}}_{L, \ORS}$, not to the superpolynomial in \cite{dgr}.
	As explained in \Cref{rem:mistake-dgr}, the latter two are different.
	Next, the identity relating $\cal{P}_L^\norm$ and $\bar{\cal{P}}_{L, \ORS}$ in \emph{loc.\@ cit.}\@ has the wrong prefactor.
	There, the authors express $\bar{\cal{P}}_{L, \ORS}$ in terms of variables $r, \alpha, Q, T$, which correspond to our $b, a, q, t^{-1}$, respectively.
	Their prefactor $Q^{2r} \alpha^{-r}$ should be $Q^r \alpha^{-r}$.
	
	By way of comparison:
	The variables $\alpha, Q, T$ in \cite[\S{A}]{eh} also correspond to our $a, q, t^{-1}$.
	Hence, their series $\cal{P}_L(Q, \alpha, T)$ is our series $\bar{\cal{P}}_{L, \ORS}(a, q, t)$.
	The identity relating $\cal{P}_\beta$ and $\cal{P}_L$ in \emph{loc.\@ cit.}\@ is correct.
\end{rem}

\begin{ex}\label{ex:unknot}
	The unknot $U$ is the knot closure of the identity in $\Br_1$, for which $(n, e, b) = (1, 0, 1)$.
	The Hochschild cohomology of the identity Soergel bimodule is
	\begin{align}
		\overline{\sf{HH}}_1^{\ast, j}(\bb{S}) = \left\{\begin{array}{ll}
			\bb{S}
			&j = 0,\\
			\bb{S}(2)
			&j = 1,\\
			0
			&j \neq 0, 1.
		\end{array}\right.
	\end{align}
	Thus $\bar{\cal{P}}_U^\norm(A, Q, T) = \overline{\hhh}_{\id}(A, Q, T) = \dfrac{1 + AQ^{-2}}{1 - Q^2}$, from which
	\begin{align}
		\bar{\cal{P}}_{U, \ORS}(a, q, t)
		&= \frac{a^{-1} + a t}{q^{-1} - q}.
	\end{align}
\end{ex}

\subsection{``Our'' Series}

For any braid $\beta \in \Br_n$ with writhe $e$ whose link closure $L$ has $b$ components, let
\begin{align}
	\bar{\sf{X}}_\beta(\sf{a}, \sf{q}, \sf{t})
	&\vcentcolon=
	\sf{t}^{\frac{e}{2}}\,
	\overline{\hhh}_\beta(\sf{a}\sf{q}, \sf{q}^{\frac{1}{2}}, \sf{q}^{\frac{1}{2}} \sf{t}^{-\frac{1}{2}}),\\[1ex]
	\sf{X}_\beta(\sf{a}, \sf{q}, \sf{t})
	&\vcentcolon=
	\frac{\bar{\sf{X}}_\beta(\sf{a}, \sf{q}, \sf{t})}{\bar{\sf{X}}_{\id}(\sf{a}, \sf{q}, \sf{t})}
	= 
	\sf{t}^{\frac{e}{2}}\,
	\hhh_\beta(\sf{a}\sf{q}, \sf{q}^{\frac{1}{2}}, \sf{q}^{\frac{1}{2}} \sf{t}^{-\frac{1}{2}}).
\end{align}
Above, note that $\sf{X}_{\id}(\sf{a}, \sf{q}, \sf{t}) = \dfrac{1 + \sf{a}}{1 - \sf{q}}$.
We can check that
\begin{align}
	\bar{\cal{P}}_{L, \ORS}(a, q, t)
	&=
	(aq^{-1})^{e - n}\,
	\bar{\sf{X}}_\beta(a^2 t, q^2, q^2 t^2),\\
	\cal{P}_{L, \ORS}(a, q, t)
	&=
	(aq^{-1})^{e - n + 1}\,
	\sf{X}_\beta(a^2 t, q^2, q^2 t^2).
\end{align}
It turns out that in the rest of this paper, $\bar{\sf{X}}_\beta$ and $\sf{X}_\beta$ are the most convenient series for us to use.

In particular, suppose that $f(x, y) \in \bb{C}[\![x]\!][y]$ such that $f(x, y) = 0$ defines a generically separable, degree-$n$ cover of the $x$-axis, fully ramified at $(x, y) = (0, 0)$.
Then the preimage in the cover of a positively-oriented loop around $x = 0$ is a braid $\beta_f \in \Br_n$, whose link closure is the link $L_f$ introduced in \S\ref{subsec:ors}.
We see that $\bar{\sf{X}}_{\beta_f}$ is precisely the series $\bar{\sf{X}}_f$ introduced in \eqref{eq:khr-normalization}.

\subsection{Torus Links}

For integers $n, d > 0$, let $T_{n, d}$ be the positive $(n, d)$ torus link, considered negative in \cite{dgr}.
Its number of components is $b = \gcd(n, d)$.
Taking $f(x, y) = y^n - x^d$ in the construction above shows that $T_{n, d}$ is the link closure of a braid $\beta_{n, d} \in \Br_n$ for which $e = (n - 1)d$.
Let 
\begin{align}
	\delta = \frac{1}{2}(e - n + b) = \frac{1}{2}(nd - n - d + \gcd(n, d)).
\end{align}
Let $\bar{\sf{X}}_{n, d} = \bar{\sf{X}}_{\beta_{n, d}}$, as in the rest of this paper, and $\sf{X}_{n, d} = \sf{X}_{\beta_{n, d}}$.

\begin{ex}
	For the Hopf link $T_{2, 2}$, we have
	\begin{align}
		\sf{X}_{2, 2}(\sf{a}, \sf{q}, \sf{t})
		&= 1 + \frac{\sf{q}\sf{t}}{1 - \sf{q}} + \frac{\sf{a}\sf{t}}{1 - \sf{q}},\\
		\cal{P}_{T_{2, 2}, \ORS}(a, q, t)
		&= aq^{-1} + \frac{aq^3 t^2}{1 - q^2} + \frac{a^3 q t^3}{1 - q^2}.
	\end{align}
\end{ex}

\begin{ex}
	For the trefoil $T_{2, 3}$, we have
	\begin{align}
		\sf{X}_{2, 3}(\sf{a}, \sf{q}, \sf{t})
		&= 1 + \sf{q}\sf{t} + \sf{a}\sf{t},\\
		\cal{P}_{T_{2, 3}, \ORS}(a, q, t)
		&= a^2(q^{-2} + q^2 t^2) + a^4 t^3.
	\end{align}
	The latter series is \cite[Ex.\ 3.3]{dgr}.
\end{ex}

\begin{ex}
	For the $(3, 4)$ torus knot $T_{3, 4}$, we have
	\begin{align}
		\sf{X}_{3, 4}(\sf{a}, \sf{q}, \sf{t})
		&= 1 + \sf{q} \sf{t} + \sf{q} \sf{t}^2 + \sf{q}^2 \sf{t}^2 + \sf{q}^3 \sf{t}^3
		+ \sf{a} (\sf{t} + \sf{t}^2 + \sf{q} \sf{t}^2 + \sf{q} \sf{t}^3 + \sf{q}^2 \sf{t}^3) 
		+ \sf{a}^2 \sf{t}^3,\\
		\cal{P}_{T_{3, 4}, \ORS}(a, q, t)
		&= a^6(q^{-6} + q^{-2} t^2 + t^4 + q^2 t^4 + q^6 t^6)\\
		&\qquad
		+ a^8 (q^{-4} t^3 + q^{-2} t^5 + t^5 + q^2 t^7 + q^4 t^7)
		+ a^{10} t^8.
	\end{align}
	The latter series is \cite[Ex.\ 3.4]{dgr}.
\end{ex}

In \Cref{sec:coprime}, we implicitly need the following identities that match $\bar{\sf{X}}_{n, d}, \sf{X}_{n, d}$ with other series in the literature.
\begin{enumerate}
	\item 	Let $\tilde{P}_{n, m}(u, q, t)$ be the series in \cite{gn}.
	For coprime $n, d$, we have
	\begin{align}
		\bar{\sf{X}}_{n, d}(\sf{a}, \sf{q}, \sf{t})
		=
		\frac{\sf{t}^\delta}{1 - \sf{q}}\,\tilde{P}_{n, d}(-\sf{a}, \sf{q}, \sf{t}^{-1}).
	\end{align}
	
	\item 	Let $\cal{P}_{m, n} = \cal{P}_{m, n}(a, q, t)$ be the series in \cite{mellit_22}.
	For coprime $n, d$, we have
	\begin{align}
		\bar{\sf{X}}_{n, d}(\sf{a}, \sf{q}, \sf{t})
		&= (-\sf{a}^{-1} \sf{q}^{\frac{1}{2}} \sf{t}^{\frac{1}{2}})^\delta\,
		\cal{P}_{n, d}(-\sf{a}, \sf{q}, \sf{t}^{-1}).
	\end{align}
	Note that the substitution sends $t \mapsto \sf{q}$ and $q \mapsto \sf{t}^{-1}$, not vice versa.
	
	\item 	Let $\hat{P}_{0^M, 0^N}(q, t, a), \hat{Q}_{0^M, 0^N}(q, t, a), R_{0^M, 0^N}(q, t, a)$ be the series in \cite{gmv}.
	For any $n, d$, we have
	\begin{align}
		\frac{1}{1 + \sf{a}}\,
		\bar{\sf{X}}_{n, d}(\sf{a}, \sf{q}, \sf{t})
		&= 
		\frac{1}{1 - \sf{q}}\,
		\sf{X}_{n, d}(\sf{a}, \sf{q}, \sf{t}^{-1})\\
		&=
		R_{0^n, 0^d}(\sf{q}, \sf{t}^{-1}, \sf{a}\sf{q}^{-1})\\
		&=
		\hat{Q}_{0^n, 0^d}(\sf{q}, \sf{t}^{-1}, \sf{a}\sf{q}^{-1})
		&\text{by}
		&\:\text{\cite[Cor.\@ 5.10]{gmv}}\\
		&=
		\sf{q}^{-d - n} \hat{P}_{0^n, 0^d}(\sf{q}, \sf{t}, \sf{a}\sf{q}^{-1})
		&\text{by}
		&\:\text{\cite[(11)]{gmv}}.
	\end{align}
	
\end{enumerate}

\bibliographystyle{abbrvurl}
\bibliography{quot_v3}

\end{document}